\definecolor{sepia}{cmyk}{0, 0.83, 1, 0.70}
\newtheorem{theorem}{Theorem}[section]
\newtheorem{corollary}[theorem]{Corollary}
\newtheorem{lemma}[theorem]{Lemma}
\newtheorem{proposition}[theorem]{Proposition}
\newtheorem{remark}[theorem]{Remark}
\newtheorem{example}[theorem]{Example}
\newtheorem{definition}[theorem]{Definition}
\numberwithin{equation}{section}
\numberwithin{theorem}{section}
\newenvironment{proof}[1][Proof]{\noindent\textbf{#1.} }{\ \rule{0.5em}{0.5em}}
\let\pdfoutput=\undefined\fi
\chardef\@x10\chardef\@xv60
\def\tcitime{
\def\@time{%
  \@minute\time\@hour\@minute\divide\@hour\@xv
  \ifnum\@hour<\@x 0\fi\the\@hour:%
  \multiply\@hour\@xv\advance\@minute-\@hour
  \ifnum\@minute<\@x 0\fi\the\@minute
  }}%
\def\x@hyperref#1#2#3{%
   \catcode`\~ = 12
   \catcode`\$ = 12
   \catcode`\_ = 12
   \catcode`\# = 12
   \catcode`\& = 12
   \catcode`\% = 12
   \y@hyperref{#1}{#2}{#3}%
}
\def\y@hyperref#1#2#3#4{%
   #2\ref{#4}#3
   \catcode`\~ = 13
   \catcode`\$ = 3
   \catcode`\_ = 8
   \catcode`\# = 6
   \catcode`\& = 4
   \catcode`\% = 14
}
\def\QCTOpt[#1]#2{%
  \def\QCTOptB{#1}
  \def\QCTOptA{#2}
}
\def\QCTNOpt#1{%
  \def\QCTOptA{#1}
  \let\QCTOptB\empty
}
\def\Qct{%
  \@ifnextchar[{%
    \QCTOpt}{\QCTNOpt}
}
\def\QCBOpt[#1]#2{%
  \def\QCBOptB{#1}%
  \def\QCBOptA{#2}%
}
\def\QCBNOpt#1{%
  \def\QCBOptA{#1}%
  \let\QCBOptB\empty
}
\def\Qcb{%
  \@ifnextchar[{%
    \QCBOpt}{\QCBNOpt}%
}
\def\PrepCapArgs{%
  \ifx\QCBOptA\empty
    \ifx\QCTOptA\empty
      {}%
    \else
      \ifx\QCTOptB\empty
        {\QCTOptA}%
      \else
        [\QCTOptB]{\QCTOptA}%
      \fi
    \fi
  \else
    \ifx\QCBOptA\empty
      {}%
    \else
      \ifx\QCBOptB\empty
        {\QCBOptA}%
      \else
        [\QCBOptB]{\QCBOptA}%
      \fi
    \fi
  \fi
}
\def\GRAPHICSPS#1{%
 \ifcase\GRAPHICSTYPE
   \special{ps: #1}%
 \or
   \special{language "PS", include "#1"}%
 \fi
}%
\def\graffile#1#2#3#4{%
    \bgroup
	   \@inlabelfalse
       \leavevmode
       \@ifundefined{bbl@deactivate}{\def~{\string~}}{\activesoff}%
        \raise -#4 \BOXTHEFRAME{%
           \hbox to #2{\raise #3\hbox to #2{\null #1\hfil}}}%
    \egroup
}%
\def\draftbox#1#2#3#4{%
 \leavevmode\raise -#4 \hbox{%
  \frame{\rlap{\protect\tiny #1}\hbox to #2%
   {\vrule height#3 width\z@ depth\z@\hfil}%
  }%
 }%
}%
\let\nographics=\@msidraft
\newif\ifwasdraft
\def\GRAPHIC#1#2#3#4#5{%
   \ifnum\@msidraft=\@ne\draftbox{#2}{#3}{#4}{#5}%
   \else\graffile{#1}{#3}{#4}{#5}%
   \fi
}
\def\addtoLaTeXparams#1{%
    \edef\LaTeXparams{\LaTeXparams #1}}%
\newif\ifBoxFrame \BoxFramefalse
\newif\ifOverFrame \OverFramefalse
\newif\ifUnderFrame \UnderFramefalse
\def\BOXTHEFRAME#1{%
   \hbox{%
      \ifBoxFrame
         \frame{#1}%
      \else
         {#1}%
      \fi
   }%
}
\def\doFRAMEparams#1{\BoxFramefalse\OverFramefalse\UnderFramefalse\readFRAMEparams#1\end}%
\def\readFRAMEparams#1{%
 \ifx#1\end%
  \let\next=\relax
  \else
  \ifx#1i\dispkind=\z@\fi
  \ifx#1d\dispkind=\@ne\fi
  \ifx#1f\dispkind=\tw@\fi
  \ifx#1t\addtoLaTeXparams{t}\fi
  \ifx#1b\addtoLaTeXparams{b}\fi
  \ifx#1p\addtoLaTeXparams{p}\fi
  \ifx#1h\addtoLaTeXparams{h}\fi
  \ifx#1X\BoxFrametrue\fi
  \ifx#1O\OverFrametrue\fi
  \ifx#1U\UnderFrametrue\fi
  \ifx#1w
    \ifnum\@msidraft=1\wasdrafttrue\else\wasdraftfalse\fi
    \@msidraft=\@ne
  \fi
  \let\next=\readFRAMEparams
  \fi
 \next
 }%
\def\IFRAME#1#2#3#4#5#6{%
      \bgroup
      \let\QCTOptA\empty
      \let\QCTOptB\empty
      \let\QCBOptA\empty
      \let\QCBOptB\empty
      #6%
      \parindent=0pt
      \leftskip=0pt
      \rightskip=0pt
      \setbox0=\hbox{\QCBOptA}%
      \@tempdima=#1\relax
      \ifOverFrame
          \typeout{This is not implemented yet}%
          \show\HELP
      \else
         \ifdim\wd0>\@tempdima
            \advance\@tempdima by \@tempdima
            \ifdim\wd0 >\@tempdima
               \setbox1 =\vbox{%
                  \unskip\hbox to \@tempdima{\hfill\GRAPHIC{#5}{#4}{#1}{#2}{#3}\hfill}%
                  \unskip\hbox to \@tempdima{\parbox[b]{\@tempdima}{\QCBOptA}}%
               }%
               \wd1=\@tempdima
            \else
               \textwidth=\wd0
               \setbox1 =\vbox{%
                 \noindent\hbox to \wd0{\hfill\GRAPHIC{#5}{#4}{#1}{#2}{#3}\hfill}\\%
                 \noindent\hbox{\QCBOptA}%
               }%
               \wd1=\wd0
            \fi
         \else
            \ifdim\wd0>0pt
              \hsize=\@tempdima
              \setbox1=\vbox{%
                \unskip\GRAPHIC{#5}{#4}{#1}{#2}{0pt}%
                \break
                \unskip\hbox to \@tempdima{\hfill \QCBOptA\hfill}%
              }%
              \wd1=\@tempdima
           \else
              \hsize=\@tempdima
              \setbox1=\vbox{%
                \unskip\GRAPHIC{#5}{#4}{#1}{#2}{0pt}%
              }%
              \wd1=\@tempdima
           \fi
         \fi
         \@tempdimb=\ht1
         \advance\@tempdimb by -#2
         \advance\@tempdimb by #3
         \leavevmode
         \raise -\@tempdimb \hbox{\box1}%
      \fi
      \egroup%
}%
\def\DFRAME#1#2#3#4#5{%
  \vspace\topsep
  \hfil\break
  \bgroup
     \leftskip\@flushglue
	 \rightskip\@flushglue
	 \parindent\z@
	 \parfillskip\z@skip
     \let\QCTOptA\empty
     \let\QCTOptB\empty
     \let\QCBOptA\empty
     \let\QCBOptB\empty
	 \vbox\bgroup
        \ifOverFrame 
           #5\QCTOptA\par
        \fi
        \GRAPHIC{#4}{#3}{#1}{#2}{\z@}%
        \ifUnderFrame 
           \break#5\QCBOptA
        \fi
	 \egroup
  \egroup
  \vspace\topsep
  \break
}%
\def\FFRAME#1#2#3#4#5#6#7{%
  \@ifundefined{floatstyle}
    {
     \begin{figure}[#1]%
    }
    {
	 \ifx#1h
      \begin{figure}[H]%
	 \else
      \begin{figure}[#1]%
	 \fi
	}
  \let\QCTOptA\empty
  \let\QCTOptB\empty
  \let\QCBOptA\empty
  \let\QCBOptB\empty
  \ifOverFrame
    #4
    \ifx\QCTOptA\empty
    \else
      \ifx\QCTOptB\empty
        \caption{\QCTOptA}%
      \else
        \caption[\QCTOptB]{\QCTOptA}%
      \fi
    \fi
    \ifUnderFrame\else
      \label{#5}%
    \fi
  \else
    \UnderFrametrue%
  \fi
  \begin{center}\GRAPHIC{#7}{#6}{#2}{#3}{\z@}\end{center}%
  \ifUnderFrame
    #4
    \ifx\QCBOptA\empty
      \caption{}%
    \else
      \ifx\QCBOptB\empty
        \caption{\QCBOptA}%
      \else
        \caption[\QCBOptB]{\QCBOptA}%
      \fi
    \fi
    \label{#5}%
  \fi
  \end{figure}%
 }%
\def\makeactives{
  \catcode`\"=\active
  \catcode`\;=\active
  \catcode`\:=\active
  \catcode`\'=\active
  \catcode`\~=\active
}
   \gdef\activesoff{%
      \def"{\string"}%
      \def;{\string;}%
      \def:{\string:}%
      \def'{\string'}%
      \def~{\string~}%
    }
\def\FRAME#1#2#3#4#5#6#7#8{%
 \bgroup
 \ifnum\@msidraft=\@ne
   \wasdrafttrue
 \else
   \wasdraftfalse%
 \fi
 \def\LaTeXparams{}%
 \dispkind=\z@
 \def\LaTeXparams{}%
 \doFRAMEparams{#1}%
 \ifnum\dispkind=\z@\IFRAME{#2}{#3}{#4}{#7}{#8}{#5}\else
  \ifnum\dispkind=\@ne\DFRAME{#2}{#3}{#7}{#8}{#5}\else
   \ifnum\dispkind=\tw@
    \edef\@tempa{\noexpand\FFRAME{\LaTeXparams}}%
    \@tempa{#2}{#3}{#5}{#6}{#7}{#8}%
    \fi
   \fi
  \fi
  \ifwasdraft\@msidraft=1\else\@msidraft=0\fi{}%
  \egroup
 }%
\def\TEXUX#1{"texux"}
\def\limfunc#1{\mathop{\rm #1}}%
\long\def\QQQ#1#2{%
     \long\expandafter\def\csname#1\endcsname{#2}}%
\long\def\QQA#1#2{}%
\def\QTR#1#2{{\csname#1\endcsname {#2}}}%
\def\EXPAND#1[#2]#3{}%
\def\NOEXPAND#1[#2]#3{}%
\def\LaTeXparent#1{}%
\def\ChildStyles#1{}%
\def\ChildDefaults#1{}%
\def\QTagDef#1#2#3{}%
  \providecommand{\UNICODE}[2][]{\protect\rule{.1in}{.1in}}
  \providecommand{\U}[1]{\protect\rule{.1in}{.1in}}
\def\QQfnmark#1{\footnotemark}
 \def\abstract{%
  \if@twocolumn
   \section*{Abstract (Not appropriate in this style!)}%
   \else \small 
   \begin{center}{\bf Abstract\vspace{-.5em}\vspace{\z@}}\end{center}%
   \quotation 
   \fi
  }%
   \def\registered{\relax\ifmmode{}\r@gistered
                    \else$\m@th\r@gistered$\fi}%
 \def\r@gistered{^{\ooalign
  {\hfil\raise.07ex\hbox{$\scriptstyle\rm\text{R}$}\hfil\crcr
  \mathhexbox20D}}}}{}%
\newdimen\theight
\def\newfmtname{LaTeX2e}
  \DeclareOldFontCommand{\rm}{\normalfont\rmfamily}{\mathrm}
  \DeclareOldFontCommand{\sf}{\normalfont\sffamily}{\mathsf}
  \DeclareOldFontCommand{\tt}{\normalfont\ttfamily}{\mathtt}
  \DeclareOldFontCommand{\bf}{\normalfont\bfseries}{\mathbf}
  \DeclareOldFontCommand{\it}{\normalfont\itshape}{\mathit}
  \DeclareOldFontCommand{\sl}{\normalfont\slshape}{\@nomath\sl}
  \DeclareOldFontCommand{\sc}{\normalfont\scshape}{\@nomath\sc}
\def\alpha{{\Greekmath 010B}}%
\def\beta{{\Greekmath 010C}}%
\def\gamma{{\Greekmath 010D}}%
\def\delta{{\Greekmath 010E}}%
\def\epsilon{{\Greekmath 010F}}%
\def\zeta{{\Greekmath 0110}}%
\def\eta{{\Greekmath 0111}}%
\def\theta{{\Greekmath 0112}}%
\def\iota{{\Greekmath 0113}}%
\def\kappa{{\Greekmath 0114}}%
\def\lambda{{\Greekmath 0115}}%
\def\mu{{\Greekmath 0116}}%
\def\nu{{\Greekmath 0117}}%
\def\xi{{\Greekmath 0118}}%
\def\pi{{\Greekmath 0119}}%
\def\rho{{\Greekmath 011A}}%
\def\sigma{{\Greekmath 011B}}%
\def\tau{{\Greekmath 011C}}%
\def\upsilon{{\Greekmath 011D}}%
\def\phi{{\Greekmath 011E}}%
\def\chi{{\Greekmath 011F}}%
\def\psi{{\Greekmath 0120}}%
\def\omega{{\Greekmath 0121}}%
\def\varepsilon{{\Greekmath 0122}}%
\def\vartheta{{\Greekmath 0123}}%
\def\varpi{{\Greekmath 0124}}%
\def\varrho{{\Greekmath 0125}}%
\def\varsigma{{\Greekmath 0126}}%
\def\varphi{{\Greekmath 0127}}%
\def\nabla{{\Greekmath 0272}}
\def\FindBoldGroup{%
   {\setbox0=\hbox{$\mathbf{x\global\edef\theboldgroup{\the\mathgroup}}$}}%
}
\def\Greekmath#1#2#3#4{%
    \if@compatibility
        \ifnum\mathgroup=\symbold
           \mathchoice{\mbox{\boldmath$\displaystyle\mathchar"#1#2#3#4$}}%
                      {\mbox{\boldmath$\textstyle\mathchar"#1#2#3#4$}}%
                      {\mbox{\boldmath$\scriptstyle\mathchar"#1#2#3#4$}}%
                      {\mbox{\boldmath$\scriptscriptstyle\mathchar"#1#2#3#4$}}%
        \else
           \mathchar"#1#2#3#4%
        \fi 
    \else 
        \FindBoldGroup
        \ifnum\mathgroup=\theboldgroup 
           \mathchoice{\mbox{\boldmath$\displaystyle\mathchar"#1#2#3#4$}}%
                      {\mbox{\boldmath$\textstyle\mathchar"#1#2#3#4$}}%
                      {\mbox{\boldmath$\scriptstyle\mathchar"#1#2#3#4$}}%
                      {\mbox{\boldmath$\scriptscriptstyle\mathchar"#1#2#3#4$}}%
        \else
           \mathchar"#1#2#3#4%
        \fi     	    
	  \fi}
\newif\ifGreekBold  \GreekBoldfalse
\let\SAVEPBF=\pbf
\def\pbf{\GreekBoldtrue\SAVEPBF}%
  \newcounter{equationnumber}  
  \def\mathletters{%
     \addtocounter{equation}{1}
     \edef\@currentlabel{\theequation}%
     \setcounter{equationnumber}{\c@equation}
     \setcounter{equation}{0}%
     \edef\theequation{\@currentlabel\noexpand\alph{equation}}%
  }
    \def\BibTeX{{\rm B\kern-.05em{\sc i\kern-.025em b}\kern-.08em
                 T\kern-.1667em\lower.7ex\hbox{E}\kern-.125emX}}}{}%
\def\AmS{{\protect\usefont{OMS}{cmsy}{m}{n}%
                A\kern-.1667em\lower.5ex\hbox{M}\kern-.125emS}}}{}%
\def\@@eqncr{\let\@tempa\relax
    \ifcase\@eqcnt \def\@tempa{& & &}\or \def\@tempa{& &}%
      \else \def\@tempa{&}\fi
     \@tempa
     \if@eqnsw
        \iftag@
           \@taggnum
        \else
           \@eqnnum\stepcounter{equation}%
        \fi
     \fi
     \global\tag@false
     \global\@eqnswtrue
     \global\@eqcnt\z@\cr}
\def\TCItag{\@ifnextchar*{\@TCItagstar}{\@TCItag}}
\def\@TCItag#1{%
    \global\tag@true
    \global\def\@taggnum{(#1)}%
    \global\def\@currentlabel{#1}}
\def\@TCItagstar*#1{%
    \global\tag@true
    \global\def\@taggnum{#1}%
    \global\def\@currentlabel{#1}}
\def\tint{\msi@int\textstyle\int}%
\def\tiint{\msi@int\textstyle\iint}%
\def\tiiint{\msi@int\textstyle\iiint}%
\def\tiiiint{\msi@int\textstyle\iiiint}%
\def\tidotsint{\msi@int\textstyle\idotsint}%
\def\toint{\msi@int\textstyle\oint}%
\newtoks\temptoksa
\newtoks\temptoksb
\newtoks\temptoksc
\def\msi@int#1#2{%
 \def\@temp{{#1#2\the\temptoksc_{\the\temptoksa}^{\the\temptoksb}}}%
 \futurelet\@nextcs
 \@int
}
\def\@int{%
   \ifx\@nextcs\limits
      \typeout{Found limits}%
      \temptoksc={\limits}%
	  \let\@next\@intgobble%
   \else\ifx\@nextcs\nolimits
      \typeout{Found nolimits}%
      \temptoksc={\nolimits}%
	  \let\@next\@intgobble%
   \else
      \typeout{Did not find limits or no limits}%
      \temptoksc={}%
      \let\@next\msi@limits%
   \fi\fi
   \@next   
}%
\def\@intgobble#1{%
   \typeout{arg is #1}%
   \msi@limits
}
\def\msi@limits{%
   \temptoksa={}%
   \temptoksb={}%
   \@ifnextchar_{\@limitsa}{\@limitsb}%
}
\def\@limitsa_#1{%
   \temptoksa={#1}%
   \@ifnextchar^{\@limitsc}{\@temp}%
}
\def\@limitsb{%
   \@ifnextchar^{\@limitsc}{\@temp}%
}
\def\@limitsc^#1{%
   \temptoksb={#1}%
   \@ifnextchar_{\@limitsd}{\@temp}%
}
\def\@limitsd_#1{%
   \temptoksa={#1}%
   \@temp
}
\def\dint{\msi@int\displaystyle\int}%
\def\diint{\msi@int\displaystyle\iint}%
\def\diiint{\msi@int\displaystyle\iiint}%
\def\diiiint{\msi@int\displaystyle\iiiint}%
\def\didotsint{\msi@int\displaystyle\idotsint}%
\def\doint{\msi@int\displaystyle\oint}%
\def\ExitTCILatex{\makeatother }
\if@compatibility\message{amsmath already loaded}\fi\aftergroup\ExitTCILatex}
\if@compatibility\message{amstex already loaded}\fi\aftergroup\ExitTCILatex}
\if@compatibility\message{amsgen already loaded}\fi\aftergroup\ExitTCILatex}
\let\DOTSI\relax
\def\RIfM@{\relax\ifmmode}%
\def\FN@{\futurelet\next}%
\def\iint{\DOTSI\intno@\tw@\FN@\ints@}%
\def\iiint{\DOTSI\intno@\thr@@\FN@\ints@}%
\def\iiiint{\DOTSI\intno@4 \FN@\ints@}%
\def\idotsint{\DOTSI\intno@\z@\FN@\ints@}%
\def\ints@{\findlimits@\ints@@}%
\newif\iflimtoken@
\newif\iflimits@
\def\findlimits@{\limtoken@true\ifx\next\limits\limits@true
 \else\ifx\next\nolimits\limits@false\else
 \limtoken@false\ifx\ilimits@\nolimits\limits@false\else
 \ifinner\limits@false\else\limits@true\fi\fi\fi\fi}%
\def\multint@{\int\ifnum\intno@=\z@\intdots@                          
 \else\intkern@\fi                                                    
 \ifnum\intno@>\tw@\int\intkern@\fi                                   
 \ifnum\intno@>\thr@@\int\intkern@\fi                                 
 \int}
\def\multintlimits@{\intop\ifnum\intno@=\z@\intdots@\else\intkern@\fi
 \ifnum\intno@>\tw@\intop\intkern@\fi
 \ifnum\intno@>\thr@@\intop\intkern@\fi\intop}%
\def\intic@{%
    \mathchoice{\hskip.5em}{\hskip.4em}{\hskip.4em}{\hskip.4em}}%
\def\negintic@{\mathchoice
 {\hskip-.5em}{\hskip-.4em}{\hskip-.4em}{\hskip-.4em}}%
\def\ints@@{\iflimtoken@                                              
 \def\ints@@@{\iflimits@\negintic@
   \mathop{\intic@\multintlimits@}\limits                             
  \else\multint@\nolimits\fi                                          
  \eat@}
 \else                                                                
 \def\ints@@@{\iflimits@\negintic@
  \mathop{\intic@\multintlimits@}\limits\else
  \multint@\nolimits\fi}\fi\ints@@@}%
\def\intkern@{\mathchoice{\!\!\!}{\!\!}{\!\!}{\!\!}}%
\def\plaincdots@{\mathinner{\cdotp\cdotp\cdotp}}%
\def\intdots@{\mathchoice{\plaincdots@}%
 {{\cdotp}\mkern1.5mu{\cdotp}\mkern1.5mu{\cdotp}}%
 {{\cdotp}\mkern1mu{\cdotp}\mkern1mu{\cdotp}}%
 {{\cdotp}\mkern1mu{\cdotp}\mkern1mu{\cdotp}}}%
\def\RIfM@{\relax\protect\ifmmode}
\def\text{\RIfM@\expandafter\text@\else\expandafter\mbox\fi}
\let\nfss@text\text
\def\text@#1{\mathchoice
   {\textdef@\displaystyle\f@size{#1}}%
   {\textdef@\textstyle\tf@size{\firstchoice@false #1}}%
   {\textdef@\textstyle\sf@size{\firstchoice@false #1}}%
   {\textdef@\textstyle \ssf@size{\firstchoice@false #1}}%
   \glb@settings}
\def\textdef@#1#2#3{\hbox{{%
                    \everymath{#1}%
                    \let\f@size#2\selectfont
                    #3}}}
\newif\iffirstchoice@
\def\Let@{\relax\iffalse{\fi\let\\=\cr\iffalse}\fi}%
\def\vspace@{\def\vspace##1{\crcr\noalign{\vskip##1\relax}}}%
\def\multilimits@{\bgroup\vspace@\Let@
 \baselineskip\fontdimen10 \scriptfont\tw@
 \advance\baselineskip\fontdimen12 \scriptfont\tw@
 \lineskip\thr@@\fontdimen8 \scriptfont\thr@@
 \lineskiplimit\lineskip
 \vbox\bgroup\ialign\bgroup\hfil$\m@th\scriptstyle{##}$\hfil\crcr}%
\def\Sb{_\multilimits@}%
\def\endSb{\crcr\egroup\egroup\egroup}%
\def\Sp{^\multilimits@}%
\newdimen\ex@
\def\rightarrowfill@#1{$#1\m@th\mathord-\mkern-6mu\cleaders
 \hbox{$#1\mkern-2mu\mathord-\mkern-2mu$}\hfill
 \mkern-6mu\mathord\rightarrow$}%
\def\leftarrowfill@#1{$#1\m@th\mathord\leftarrow\mkern-6mu\cleaders
 \hbox{$#1\mkern-2mu\mathord-\mkern-2mu$}\hfill\mkern-6mu\mathord-$}%
\def\leftrightarrowfill@#1{$#1\m@th\mathord\leftarrow
\mkern-6mu\cleaders
 \hbox{$#1\mkern-2mu\mathord-\mkern-2mu$}\hfill
 \mkern-6mu\mathord\rightarrow$}%
\def\overrightarrow{\mathpalette\overrightarrow@}%
\def\overrightarrow@#1#2{\vbox{\ialign{##\crcr\rightarrowfill@#1\crcr
 \noalign{\kern-\ex@\nointerlineskip}$\m@th\hfil#1#2\hfil$\crcr}}}%
\def\overleftarrow{\mathpalette\overleftarrow@}%
\def\overleftarrow@#1#2{\vbox{\ialign{##\crcr\leftarrowfill@#1\crcr
 \noalign{\kern-\ex@\nointerlineskip}$\m@th\hfil#1#2\hfil$\crcr}}}%
\def\overleftrightarrow{\mathpalette\overleftrightarrow@}%
\def\overleftrightarrow@#1#2{\vbox{\ialign{##\crcr
   \leftrightarrowfill@#1\crcr
 \noalign{\kern-\ex@\nointerlineskip}$\m@th\hfil#1#2\hfil$\crcr}}}%
\def\underrightarrow{\mathpalette\underrightarrow@}%
\def\underrightarrow@#1#2{\vtop{\ialign{##\crcr$\m@th\hfil#1#2\hfil
  $\crcr\noalign{\nointerlineskip}\rightarrowfill@#1\crcr}}}%
\def\underleftarrow{\mathpalette\underleftarrow@}%
\def\underleftarrow@#1#2{\vtop{\ialign{##\crcr$\m@th\hfil#1#2\hfil
  $\crcr\noalign{\nointerlineskip}\leftarrowfill@#1\crcr}}}%
\def\underleftrightarrow{\mathpalette\underleftrightarrow@}%
\def\underleftrightarrow@#1#2{\vtop{\ialign{##\crcr$\m@th
  \hfil#1#2\hfil$\crcr
 \noalign{\nointerlineskip}\leftrightarrowfill@#1\crcr}}}%
\def\qopnamewl@#1{\mathop{\operator@font#1}\nlimits@}
\let\nlimits@\displaylimits
\def\setboxz@h{\setbox\z@\hbox}
\def\varlim@#1#2{\mathop{\vtop{\ialign{##\crcr
 \hfil$#1\m@th\operator@font lim$\hfil\crcr
 \noalign{\nointerlineskip}#2#1\crcr
 \noalign{\nointerlineskip\kern-\ex@}\crcr}}}}
 \def\rightarrowfill@#1{\m@th\setboxz@h{$#1-$}\ht\z@\z@
  $#1\copy\z@\mkern-6mu\cleaders
  \hbox{$#1\mkern-2mu\box\z@\mkern-2mu$}\hfill
  \mkern-6mu\mathord\rightarrow$}
\def\leftarrowfill@#1{\m@th\setboxz@h{$#1-$}\ht\z@\z@
  $#1\mathord\leftarrow\mkern-6mu\cleaders
  \hbox{$#1\mkern-2mu\copy\z@\mkern-2mu$}\hfill
  \mkern-6mu\box\z@$}
\def\projlim{\qopnamewl@{proj\,lim}}
\def\injlim{\qopnamewl@{inj\,lim}}
\def\varinjlim{\mathpalette\varlim@\rightarrowfill@}
\def\varprojlim{\mathpalette\varlim@\leftarrowfill@}
\def\varliminf{\mathpalette\varliminf@{}}
\def\varliminf@#1{\mathop{\underline{\vrule\@depth.2\ex@\@width\z@
   \hbox{$#1\m@th\operator@font lim$}}}}
\def\varlimsup{\mathpalette\varlimsup@{}}
\def\varlimsup@#1{\mathop{\overline
  {\hbox{$#1\m@th\operator@font lim$}}}}
\def\align{\@verbatim \frenchspacing\@vobeyspaces \@alignverbatim
You are using the "align" environment in a style in which it is not defined.}
\let\csname endalign*\endcsname =\endtrivlist
\def\alignat{\@verbatim \frenchspacing\@vobeyspaces \@alignatverbatim
You are using the "alignat" environment in a style in which it is not defined.}
\let\csname endalignat*\endcsname =\endtrivlist
\def\xalignat{\@verbatim \frenchspacing\@vobeyspaces \@xalignatverbatim
You are using the "xalignat" environment in a style in which it is not defined.}
\let\csname endxalignat*\endcsname =\endtrivlist
\def\gather{\@verbatim \frenchspacing\@vobeyspaces \@gatherverbatim
You are using the "gather" environment in a style in which it is not defined.}
\let\csname endgather*\endcsname =\endtrivlist
\def\multiline{\@verbatim \frenchspacing\@vobeyspaces \@multilineverbatim
You are using the "multiline" environment in a style in which it is not defined.}
\let\csname endmultiline*\endcsname =\endtrivlist
\def\arrax{\@verbatim \frenchspacing\@vobeyspaces \@arraxverbatim
You are using a type of "array" construct that is only allowed in AmS-LaTeX.}
\def\tabulax{\@verbatim \frenchspacing\@vobeyspaces \@tabulaxverbatim
You are using a type of "tabular" construct that is only allowed in AmS-LaTeX.}
\let\csname endarrax*\endcsname =\endtrivlist
\let\csname endtabulax*\endcsname =\endtrivlist
 \def\endequation{%
     \ifmmode\ifinner 
      \iftag@
        \addtocounter{equation}{-1} 
        $\hfil
           \displaywidth\linewidth\@taggnum\egroup \endtrivlist
        \global\tag@false
        \global\@ignoretrue   
      \else
        $\hfil
           \displaywidth\linewidth\@eqnnum\egroup \endtrivlist
        \global\tag@false
        \global\@ignoretrue 
      \fi
     \else   
      \iftag@
        \addtocounter{equation}{-1} 
        \eqno \hbox{\@taggnum}
        \global\tag@false%
        $$\global\@ignoretrue
      \else
        \eqno \hbox{\@eqnnum}
        $$\global\@ignoretrue
      \fi
     \fi\fi
 } 
 \newif\iftag@ \tag@false
 \def\TCItag{\@ifnextchar*{\@TCItagstar}{\@TCItag}}
 \def\@TCItag#1{%
     \global\tag@true
     \global\def\@taggnum{(#1)}%
     \global\def\@currentlabel{#1}}
 \def\@TCItagstar*#1{%
     \global\tag@true
     \global\def\@taggnum{#1}%
     \global\def\@currentlabel{#1}}
     \def\tag{\@ifnextchar*{\@tagstar}{\@tag}}
     \def\@tag#1{%
         \global\tag@true
         \global\def\@taggnum{(#1)}}
     \def\@tagstar*#1{%
         \global\tag@true
         \global\def\@taggnum{#1}}
\begin{document}

\title{Weak, Strong and Linear Convergence of the CQ-Method Via  the
Regularity of Landweber Operators}
\author{Andrzej Cegielski, Simeon Reich and Rafa\l\ Zalas}
\maketitle

\begin{abstract}
We consider the split convex feasibility problem in a fixed point setting.  Motivated by  the
well-known CQ-method of Byrne (2002), we define an abstract Landweber
transform which applies to more general operators than the metric
projection. We call the result of this transform a Landweber operator. It
turns out that the Landweber transform preserves many interesting
properties. For example, the Landweber transform of a (quasi/firmly)
nonexpansive mapping is again (quasi/firmly) nonexpansive. Moreover, the
Landweber transform of a (weakly/linearly) regular mapping is again
(weakly/linearly) regular.  The preservation of regularity is important  because  it leads  to (weak/linear) convergence of many CQ-type methods. \medskip

\textbf{Keywords}: CQ-method, linear rate, split feasibility problem.

\textbf{AMS Subject Classification:} 47J25, 47N10, 49N45
\end{abstract}


\section{Introduction}

Let $\mathcal{H}_{1}$ and $\mathcal{H}_{2}$ be two real Hilbert spaces and
let $A:\mathcal{H}_{1}\rightarrow \mathcal{H}_{2}$ be a nonzero bounded
linear operator.

The \textit{split convex feasibility problem} (SCFP) is to
\begin{equation}
\text{find $x\in C$ \quad  such that  \quad $Ax\in Q$,}  \label{int:SCFP}
\end{equation}%
where $C\subseteq \mathcal{H}_{1}$ and $Q\subseteq \mathcal{H}_{2}$ are
nonempty, closed and convex. In this paper we assume that the SCFP has at
least one solution, that is, $C\cap A^{-1}(Q)\neq \emptyset $, and that
\begin{equation}
C:=\limfunc{Fix}S\quad \text{and}\quad Q:=\limfunc{Fix}T,
\label{int:SCFP-fix}
\end{equation}%
for some given operators $S$ and $T$. The SCFP was introduced by Censor and
Elfving \cite{CE94} for $\mathcal{H}_{1}=\mathbb{R}^{m}$ and $\mathcal{H}%
_{2}=\mathbb{R}^{n}$  and has attracted a lot of attention since then. Before
describing the contribution of our paper, we briefly recall  a  few results and methods which  have had  significant impact on this field.

\subsection*{Related work}

Among various methods designed for solving \eqref{int:SCFP}, the most
celebrated one is the \textit{$CQ$-method} of Byrne \cite{Byr02} defined by

\begin{equation}
x_{0}\in \mathcal{H}_{1};\quad x_{k+1}:=P_{C}\left( x_{k}+\frac{\lambda _{k}}{%
\Vert A\Vert ^{2}}A^{\ast }\Big(P_{Q}(Ax_{k})-Ax_{k}\Big)\right) ,\quad
k=0,1,2,\ldots ,  \label{int:CQmethod}
\end{equation}%
where $\lambda _{k}\in \lbrack \varepsilon ,2-\varepsilon ]$ for some $%
\varepsilon \in (0,1)$, $P_{C}$ and $P_{Q}$ are  the  metric projections onto $C$
and $Q$, respectively, and $A^{\ast }:\mathcal{H}_{2}\rightarrow \mathcal{H}_{1}$ is the adjoint operator to $A$. The above method was shown to converge to a minimizer of
\begin{equation}
f(x):=\frac{1}{2}\Vert P_{Q}(Ax)-Ax\Vert ^{2}  \label{int:f}
\end{equation}%
over $C$, assuming that such a minimizer exists. In the consistent case the
limit point becomes a member of $C\cap A^{-1}(Q)$. As it  has already been  mentioned by Byrne,  a  special case of the method \eqref{int:CQmethod}, with $C=\mathbb{R}^{m}$ and $Q=\{b\}\subset \mathbb{R}^{n}$, was introduced
by Landweber in \cite{Lan51}.  Therefore,  the CQ-method is sometimes referred to
as a \textit{projected Landweber method}; see, for example, \cite{PB97, JEKC06, ZC10} and \cite[Chapter 5]{Ceg12}.

 Because of  the differentiability of the squared distance function, the $CQ$%
-method can be viewed as the projected gradient method (PGM)
\begin{equation}
x_{0}\in \mathcal{H}_{1};\quad x_{k+1}:=P_{C}\left( x_{k}-\frac{\lambda _{k}}{%
L}\nabla f(x_{k})\right) ,\quad k=0,1,2,\ldots,   \label{int:ProjG}
\end{equation}%
with  a  convex and differentiable $f$,  where  $L=\Vert A\Vert ^{2}$  is  a Lipschitz constant of $\nabla f$. The convergence analysis of the PGM can be found, for example, in \cite[Chapter VII]{Pol87}  with  the constant parameter $\lambda _{k}=\lambda \in (0,2)$. Depending on the choice of the objective $f$ in \eqref{int:ProjG}, one can consider various extensions of the basic
CQ-method. For example, in the case of the \textit{multiple-set split
convex feasibility problem} (MSSCFP) \cite{CEKB05}, which is to
\begin{equation}
\text{find }x\in C\cap \bigcap_{i=1}^{m}C_{i}\quad \text{such that}\quad Ax\in
\bigcap_{j=1}^{n}Q_{j},  \label{int:MSSCFP}
\end{equation}%
the following objective
\begin{equation}
f(x):=\frac{1}{2}\sum_{i=1}^{m}\alpha _{i}\Vert P_{C_{i}}x-x\Vert ^{2}+\frac{%
1}{2}\sum_{j=1}^{n}\beta _{j}\Vert P_{Q_{j}}(Ax)-Ax\Vert ^{2},\quad \alpha
_{i},\beta _{j}>0,  \label{int:f-sim}
\end{equation}%
when combined with \eqref{int:ProjG}, led to a class of simultaneous
projection algorithms which weakly converge to a minimizer of $f$ over $C$.
The above $f$ has been introduced in \cite{CEKB05} in the Euclidean space
and further considered in \cite{Xu06} and \cite{MR07}  in a  general Hilbert space. It can be shown that $\nabla f$ is Lipschitz continuous with $%
L=\sum_{i=1}^{m}\alpha _{i}+\Vert A\Vert ^{2}\sum_{j=1}^{n}\beta _{j}$.

On the other hand, by the Lipschitz continuity of $\nabla f$ and with a
fixed value of $\lambda _{k}=\lambda \in (0,2)$, the PGM method %
\eqref{int:ProjG} becomes an example of the well-known Krasnosel'ski{\u{\i}}%
-Mann \cite{Man53, Kra55} method with an averaged operator. Thanks to the
above observation, the weak convergence of the $CQ$-method in a Hilbert
space was established, for example, in \cite{Byr04} and in \cite{Xu10}. See
also \cite{Xu11} for a weak convergence result with varying $\lambda _{k}\in
\lbrack \varepsilon ,2-\varepsilon ]$ in the PGM.

In some cases, it may happen that one has more information regarding the
sets $C$ and $Q$ which determine the SCFP \eqref{int:SCFP}. For example,
following Yang \cite{Yan04}, one could assume that
\begin{equation}
C:=\{x\in \mathcal{H}_{1}\mid c(x)\leq 0\}\quad \text{and}\quad Q:=\{y\in
\mathcal{H}_{2}\mid q(y)\leq 0\}  \label{int:SCFP-sublevel1}
\end{equation}%
for some lower semi-continuous convex functions $c\colon \mathcal{H}_{1}\rightarrow \mathbb{R}$ and $%
q\colon \mathcal{H}_{2}\rightarrow \mathbb{R}$ and consider a subgradient
variant of the $CQ$-method
\begin{equation}
x_{0}\in \mathcal{H}_{1};\quad x_{k+1}:=P_{c}\left( x_{k}+\frac{\lambda _{k}}{%
\Vert A\Vert ^{2}}A^{\ast }\Big(P_{q}(Ax_{k})-Ax_{k}\Big)\right) ,\quad
k=0,1,2,\ldots ,  \label{int:subCQmethod}
\end{equation}%
where both metric projections $P_{C}$ and $P_{Q}$ were formally replaced by
the corresponding subgradient projections $P_{c}$ and $P_{q}$; see Example %
\ref{e-subProj} for  a  precise definition. Yang \cite{Yan04}  shows that method \eqref{int:subCQmethod} converges  to some point in the solution set $C\cap A^{-1}(Q)\neq \emptyset $ in the finite dimensional setting and  when  $\lambda_{k}=\lambda \in (0,2)$. A weak convergence result in a Hilbert space was
later established by Xu in \cite{Xu10}. A simultaneous subgradient
projection algorithm for the MSSCFP  was also  considered in \cite{CMS07}.

Another, more general, situation may occur in the case of the \textit{split
common fixed point problem} \cite{CS09}, where
\begin{equation}
C:=\limfunc{Fix}S\quad \text{and}\quad Q:=\limfunc{Fix}T
\label{int:SCFP-sublevelb}
\end{equation}%
for given operators $S$ and $T$, which  are  assumed to be cutters; see
Definition \ref{def:QNE}. Again, by formally replacing  the  metric projections $%
P_{C}$ and $P_{Q}$ in \eqref{int:CQmethod} by $S$ and $T$, respectively, we
arrive at the following fixed-point variant of the $CQ$-method:
\begin{equation}
x_{0}\in \mathcal{H}_{1};\quad x_{k+1}:=S\left( x_{k}+\frac{\lambda _{k}}{%
\Vert A\Vert ^{2}}A^{\ast }\Big(T(Ax_{k})-Ax_{k}\Big)\right) ,\quad
k=0,1,2,\ldots .  \label{int:cutterCQmethod}
\end{equation}%
Obviously, since the subgradient projection is a cutter (see Example \ref%
{e-subProj}), method \eqref{int:cutterCQmethod} extends %
\eqref{int:subCQmethod}. Observe that, in general, \eqref{int:cutterCQmethod}
and \eqref{int:subCQmethod} are neither a variant of the PGM (no
differentiability), nor  of the  Krasnosel'ski{\u{\i}}-Mann method  because a cutter need not be nonexpansive. Nevertheless,  there are several results
showing the convergence of method \eqref{int:cutterCQmethod}; see, for
example, \cite{CS09} in the finite dimensional setting or \cite{Mou11}, \cite%
{WX11} and \cite{Ceg15} for weak convergence in Hilbert space. The main
assumption in the above-mentioned results is the demi-closedness of $%
\limfunc{Id}-S$ and $\limfunc{Id}-T$ at zero, which we call in this paper \textit{weak regularity} (see Definition \ref{def:Reg}). Note that only in
\cite[Example 6.1]{Ceg15} the relaxation parameters $\lambda _{k}\in
\lbrack \varepsilon ,2-\varepsilon ]$, whereas $\lambda _{k}=\lambda \in
(0,2) $ in \cite{CS09, Mou11, WX11}.

Observe that in all of the above $CQ$-type methods, the computation of the
next iterate requires knowing the operator norm $\|A\|$ or its estimation;
see, for example, \cite[Proposition 4.1]{Byr02}, where the norm of $A$ was
estimated for a sparse matrix $A$.  Other  solutions can be found, for
example, in \cite{QX05} and \cite{Yan05}. One of the simplest and the most
elegant one is due to L\'{o}pez et al. \cite{LMWX12}, who suggested to
consider the following variation of the $CQ$-method:
\begin{equation}  \label{int:ExtrPG}
x_0\in \mathcal{H}_1; \quad x_{k+1}:=P_C\left(x_k-\lambda_k\frac{2f(x_k)}{%
\|\nabla f(x_k)\|^2}\nabla f(x_k) \right), \quad k=0,1,2,\ldots,
\end{equation}
with $f$ defined in \eqref{int:f}. Observe that \eqref{int:ExtrPG} is
indeed a $CQ$-type method, which we call here the \textit{extrapolated $CQ$%
-method} since, after expanding, it can be explicitly written as
\begin{equation}  \label{int:ExtrCQ}
x_0\in \mathcal{H}_1; \quad x_{k+1}=P_C\left( x_k+\frac{\lambda_k \tau(x_k)}{%
\|A\|^2} A^\ast \Big(P_Q(Ax_k)-Ax_k\Big)\right), \quad k=0,1,2,\ldots,
\end{equation}
with $\tau(x)$ defined by
\begin{equation}  \label{int:simga}
\tau(x):=\frac{2\|A\|^2f(x)}{\|\nabla f(x)\|^2} = \left(\frac{\|A\|
\|P_Q(Ax)-Ax\|}{\|A^\ast(P_Q(Ax)-Ax)\|}\right)^2 \geq 1
\end{equation}
for $Ax\notin Q$ and $\tau(x):=1$ otherwise.  Observe that $x_{k+1}$ does not, in fact, depend on $\|A\|$.  Weak convergence of method %
\eqref{int:ExtrPG}  was  established in \cite{LMWX12} under the
assumption that $C\cap A^{-1}(Q)\neq\emptyset$ and $\lambda_k\in
[\varepsilon,2-\varepsilon]$. Again, by formally replacing $P_C$ and $P_Q$
in \eqref{int:ExtrCQ} and \eqref{int:simga} by weakly regular cutters $S$
and $T$, respectively, Cegielski  established  weak convergence of the above method
in \cite{Ceg16}.

Recently, Wang et al. \cite{WHLY17} have formulated a sufficient condition
for  a linear rate of convergence  of the extrapolated $CQ$-method \eqref{int:ExtrCQ}--%
\eqref{int:simga} in terms of bounded linear regularity of the SCFP, that
is,  when  for all $r>0$, there is $\gamma_r>0$  such that  for all $x\in C\cap B(0,r)$, we have
\begin{equation}  \label{int:BLRofSCFP}
\gamma_r d\big(x, C\cap A^{-1}(Q)\big) \leq d(Ax,Q).
\end{equation}
In particular, the above condition holds when $A(C)\cap \limfunc{int} Q
\neq\emptyset$; see \cite[Proposition 2.5]{WHLY17} for more details. We comment on this condition in connection with our work below; see Remark \ref%
{r-BLRofSCFP}.

\subsection*{Contribution of our paper}

Based on the above short overview, one could distinguish between three
different approaches  to the study of  the convergence properties of various $CQ$-type methods. The first one is viewed through the projected gradient method related to a certain objective $f$. The second one is a Krasnosel'ski{\u{\i}}%
-Mann approach with a certain averaged mapping. The last one is a more
general fixed point approach, where the metric projections $P_{C}$ and $%
P_{Q} $ are formally replaced by abstract operators $S$ and $T$. In this
paper we focus on the latter case.

To this end, we introduce the \textit{Landweber transform} $\mathcal{L}\{\cdot
\}$, which for a given operator $T\colon \mathcal{H}_{2}\rightarrow \mathcal{H%
}_{2}$ assigns an operator $\mathcal{L}\{T\}\colon \mathcal{H}%
_{1}\rightarrow \mathcal{H}_{1}$ defined by
\begin{equation}
\mathcal{L}\{T\}x:=x+\frac{1}{\Vert A\Vert ^{2}}A^{\ast }\big(T(Ax)-Ax\big), \quad
 x\in \mathcal H_1,
\label{int:Land}
\end{equation}%
which we call in this paper  the  \textit{Landweber operator}  corresponding  to $T$. Observe that we can rewrite \eqref{int:CQmethod}, \eqref{int:subCQmethod} and %
\eqref{int:cutterCQmethod} by using $\mathcal{L}\{P_{Q}\}$, $\mathcal{L}%
\{P_{q}\}$ and $\mathcal{L}\{T\}$, respectively. Moreover, one can show that
in all of the above-mentioned cases, we have
\begin{equation}
A^{-1}(Q)=\limfunc{Fix}\mathcal{L}\{P_{Q}\}=\limfunc{Fix}\mathcal{L}%
\{P_{q}\}=\limfunc{Fix}\mathcal{L}\{T\};
\end{equation}%
see \cite{Ceg15}. Thus the abstract study of  the  Landweber transform may indeed
contribute to the convergence analysis of various $CQ$-type methods.

The main purpose of this paper is to examine which properties of $T$ can be
preserved  by  the Landweber transform. In particular, it is known
that the Landweber transform of a (firmly/quasi) nonexpansive operator is
again (firmly/quasi) nonexpansive (see \cite[Lemma 3.1]{WX11}, \cite[Lemma 4.1]{Ceg15} and \cite[Proposition 4]{Ceg16}). Moreover, the transform of a weakly/boundedly regular operator preserves
the type of the regularity; see \cite[proof of Thm. 3.3]{WX11} and \cite[%
Lemma 4.1]{Ceg15} for the weak regularity, and \cite[Thm. 4.2]{CM16} for the
bounded regularity under  a compact operator  $A$ with closed range.

The main contribution of this paper is to formulate sufficient conditions
which  ensure that  the Landweber transform preserves bounded linear regularity which, as far as we know, is new. Moreover, we show that compactness of $A$ is no longer needed for the preservation of bounded regularity. For both of
these results see Theorem \ref{t-regLand}. In addition, based on %
\eqref{int:ExtrCQ} and \eqref{int:simga}, we consider the extrapolated
Landweber operator for which we establish similar results as in the
nonextrapolated case.

We would like to emphasize that by knowing the regularities of the operators
$S$ and $\mathcal{L}\{T\}$, in view of  the  recent paper \cite{CRZ18} (see Theorem \ref{t-regOper}), we  are able to  establish the corresponding weak, norm and linear convergence of $CQ$-type methods without
restricting ourselves just to projections. In particular, we formulate
sufficient conditions for linear convergence of  the  subgradient and cutter
methods described in \eqref{int:subCQmethod} and \eqref{int:cutterCQmethod}.
We comment on this in detail in Section \ref{s-SCFP}.

Finally, in order to formulate the linear rate more explicitly, we
investigate in detail the closed range theorem; see Lemma \ref{l-clR}.  To this end  we introduce  the  new quantity $|A|:=\inf \{\Vert Ax\Vert \mid x\in (\ker A)^{\perp },\ \Vert x\Vert =1\}$, which, for a matrix $A$  turns out to be  the square root of the smallest positive eigenvalue of the matrix $A^{\ast }A$.  In this connection, recall  that the spectral norm of the matrix $A$ is the square root of the
largest eigenvalue of $A^{\ast }A$. In addition, we show that, similarly to
the properties of $\Vert A\Vert $, we have $\left\vert A\right\vert
=\left\vert A^{\ast }\right\vert =\sqrt{\left\vert AA^{\ast }\right\vert }=%
\sqrt{\left\vert A^{\ast }A\right\vert }$.

\subsection*{Organization of our paper}

In Section \ref{s-preliminaries} we recall  several  necessary tools which  are  used in the establishing our main results. In Section \ref{s-ClRThm} we
present the closed range theorem. In Section \ref{s-LtO} we  formally introduce  the Landweber transform and investigate its properties. In Section %
\ref{s-ELtO} we adjust the results from the previous section to the
extrapolated Landweber operator. Finally, in Section \ref{s-SCFP}
we present a few convergence results for various $CQ$-type methods.

\section{Preliminaries}

\label{s-preliminaries}Let $\mathcal{H}$ be a real Hilbert space. We divide
the preliminaries into three separate subsections.

\subsection{Fej\'{e}r monotone sequences}

\begin{definition}
Let $F\subseteq\mathcal{H}$ be a nonempty, closed and convex set, and let $%
\{x_k\}_{k=0}^\infty$ be a sequence in $\mathcal{H}$. We say that $%
\{x_k\}_{k=0}^\infty$ is \textit{Fej\'er monotone} with respect to $F$ if
\begin{equation}
\|x_{k+1}-z\|\leq\|x_k-z\|
\end{equation}
for all $z\in F$ and every integer $k=0,1,2,\ldots$.
\end{definition}

\begin{theorem}
\label{t-Fejer} Let the sequence $\{x_{k}\}_{k=0}^{\infty } \subseteq \mathcal{H}$ be Fej\'{e}r monotone with respect to $F$. Then

\begin{enumerate}
\item[$(i)$] $\{x_k\}_{k=0}^\infty$ converges weakly to some point $%
x^\infty\in F$ if and only if all its weak cluster points lie in $F$.

\item[$(ii)$] $\{x_k\}_{k=0}^\infty$ converges strongly to some point $%
x^\infty\in F$ if and only if $d(x_k,F) \to 0$.

\item[$(iii)$] If there is some constant $q\in(0,1)$ such that $%
d(x_{k+1},F)\leq q d(x_k,F)$ holds for every $k=0,1,2,\ldots$, then $%
\|x_k-x^\infty\|\leq 2d(x_0,F)q^k$ for some $x_\infty \in F$.
\end{enumerate}
\end{theorem}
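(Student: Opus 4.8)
The plan is to prove each of the three parts using the defining inequality of Fej\'er monotonicity, namely that $\|x_k-z\|$ is nonincreasing in $k$ for every fixed $z\in F$. The first thing to record is the immediate consequence that for each fixed $z\in F$, the sequence $\{\|x_k-z\|\}_{k=0}^\infty$ is bounded and monotonically nonincreasing, hence convergent; in particular $\{x_k\}$ is a bounded sequence, so it has weak cluster points, and $d(x_k,F)$ is also nonincreasing.

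For part $(i)$, one direction is trivial: if $\{x_k\}$ converges weakly to $x^\infty\in F$, then $x^\infty$ is its unique weak cluster point and lies in $F$. For the converse, I would assume all weak cluster points lie in $F$ and show the whole sequence converges weakly. Since $\{x_k\}$ is bounded, it suffices (by a standard subsequence argument) to show it has a unique weak cluster point. Suppose $u$ and $v$ are two weak cluster points, realized along subsequences $x_{k_j}\rightharpoonup u$ and $x_{l_i}\rightharpoonup v$; by hypothesis $u,v\in F$. The key identity is the expansion
\begin{equation}
\|x_k-u\|^2-\|x_k-v\|^2 = 2\langle x_k, v-u\rangle + \|u\|^2-\|v\|^2,
\end{equation}
whose left-hand side converges as $k\to\infty$ because both $\|x_k-u\|$ and $\|x_k-v\|$ converge (using $u,v\in F$). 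Hence $\langle x_k,v-u\rangle$ converges; evaluating the limit along the two subsequences gives $\langle u,v-u\rangle=\langle v,v-u\rangle$, i.e. $\|u-v\|^2=0$, so $u=v$. This establishes uniqueness of the weak cluster point and thus weak convergence to a point of $F$.

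For part $(ii)$, if $x_k\to x^\infty\in F$ strongly then $d(x_k,F)\le\|x_k-x^\infty\|\to 0$. Conversely, assume $d(x_k,F)\to 0$. Using Fej\'er monotonicity one shows $\{x_k\}$ is Cauchy: for $m>k$, pick a near-projection point and use that distances to any fixed $z\in F$ are nonincreasing to bound $\|x_m-x_k\|$ in terms of $d(x_k,F)$. Since $\mathcal H$ is complete, $x_k\to x^\infty$ for some $x^\infty$, and $d(x^\infty,F)=\lim d(x_k,F)=0$ with $F$ closed forces $x^\infty\in F$. For part $(iii)$, from $d(x_{k+1},F)\le q\,d(x_k,F)$ we get $d(x_k,F)\le q^k d(x_0,F)$, so $d(x_k,F)\to 0$ and part $(ii)$ gives a strong limit $x^\infty\in F$. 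The rate estimate then follows by a telescoping argument: writing the triangle inequality through the nearest points and summing the geometric series $\sum_{j\ge k} q^j$, one bounds $\|x_k-x^\infty\|$ by roughly $2\sum_{j\ge k}d(x_j,F)\le 2 d(x_0,F)q^k/(1-q)$; to obtain the stated cleaner constant $2d(x_0,F)q^k$ one argues slightly more carefully, controlling $\|x_k-x^\infty\|$ directly via the Cauchy estimate rather than the crude geometric sum. I expect the main obstacle to be exactly this Cauchy estimate in $(ii)$ and $(iii)$: extracting a quantitative bound on $\|x_m-x_k\|$ purely from the nonincreasing-distance property requires care, since Fej\'er monotonicity controls distances to points of $F$ but not directly the increments of the sequence itself.
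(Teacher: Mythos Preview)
The paper does not give its own proof of this theorem: it simply cites \cite[Theorem 2.16 and Proposition 1.6]{BB96}. Your argument is the standard one and is essentially what appears in that reference, so your approach is correct and aligned with what the paper relies on.

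One small clarification for part $(iii)$: the geometric-series detour you mention is unnecessary, and your instinct to go ``directly via the Cauchy estimate'' is exactly right. From the argument you set up in $(ii)$ one has, for all $m\ge k$ and with $z:=P_F x_k$,
\[
\|x_m-x_k\|\le \|x_m-z\|+\|z-x_k\|\le \|x_k-z\|+\|x_k-z\|=2\,d(x_k,F).
\]
Letting $m\to\infty$ gives $\|x_k-x^\infty\|\le 2\,d(x_k,F)\le 2\,d(x_0,F)\,q^k$ immediately, with no factor $1/(1-q)$. So there is no obstacle here; the ``quantitative Cauchy bound'' you were worried about is precisely the inequality $\|x_m-x_k\|\le 2\,d(x_k,F)$, and it drops out of Fej\'er monotonicity in one line.
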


\begin{proof}
See, for example, \cite[Theorem 2.16 and Proposition 1.6]{BB96}.
\end{proof}

\subsection{Quasi-nonexpansive Operators}

\begin{definition}
\rm\ %
\label{def:QNE} Let $T:\mathcal{H}\rightarrow\mathcal{H}$ be an operator
with a fixed point, that is, $\limfunc{Fix} T \neq \emptyset$. We say that $%
T $ is

\begin{enumerate}
\item[$(i)$] \textit{quasi-nonexpansive} (QNE) if for all $x\in\mathcal{H}$
and all $z\in\limfunc{Fix} T$,
\begin{equation}
\| T x -z\|\leq\| x-z\|;  \label{eq:qne}
\end{equation}

\item[$(ii)$] $\rho$\textit{-strongly quasi-nonexpansive} ($\rho$-SQNE),
where $\rho\geq 0$, if for all $x\in\mathcal{H}$ and all $z\in\limfunc{Fix}
U $,
\begin{equation}
\| Tx-z\|^2\leq\| x-z\|^2-\rho\| Tx -x\|^2;  \label{eq:sqne}
\end{equation}

\item[$(iii)$] a \textit{cutter} if for all $x\in \mathcal{H}$ and all $z\in
\limfunc{Fix}T$,
\begin{equation}
\langle z-Tx,x-Tx\rangle \leq 0.  \label{eq:cutter}
\end{equation}
\end{enumerate}
\end{definition}

\begin{example}[Subgradient Projection]
\label{e-subProj}%
\rm\ %
\textrm{Let $f\colon \mathcal{H}\rightarrow \mathbb{R}$ be a lower semi-continuous and convex function with nonempty sublevel set $S(f,0):=\{x\in \mathcal{H}\mid f(x)\leq 0\}\neq
\emptyset $. For each $x\in \mathcal{H}$, let  $g_f(x)$  be a chosen subgradient
from the subdifferential set $\partial f(x):=\{g\in \mathcal{H}\mid f(y)\geq
f(x)+\langle g,y-x\rangle \text{, for all }y\in \mathcal{H}\}$,  which, by \cite[Proposition 16.27]{BC17}, is nonempty. We call the operator
$P_{f}\colon \mathcal{H}\rightarrow \mathcal{H}$ defined by
\begin{equation}
P_{f}(x):=%
\begin{cases}
\displaystyle x-\frac{f(x)}{\Vert g_{f}(x)\Vert ^{2}}g_{f}(x)\text{,} &
\text{if }f(x)>0, \\
x\text{,} & \text{otherwise.}%
\end{cases}%
\end{equation}%
a  \textit{subgradient projection}  related to $f$. It is not difficult to see
that $P_{f}$ is a cutter and $\limfunc{Fix}P_{f}=S(f,0)$ \cite[Corollary
4.2.6]{Ceg12}. }
\end{example}

\bigskip For a given \textit{relaxation function} $\alpha \colon \mathcal{H}%
\rightarrow (0,\infty )$ and an operator $T:\mathcal{H}\rightarrow \mathcal{H%
}$, we denote by $T_{\alpha }$  the  \textit{generalized $\alpha (\cdot )$%
-relaxation} of $T$ defined by
\begin{equation}
T_{\alpha }x:=x+\alpha (x)(Tx-x).  \label{e-genRel}
\end{equation}%
If $\alpha (x)=\alpha $, for some $\alpha >0$, then we simply call $%
T_{\alpha }$ an $\alpha $-\textit{relaxation}. In this paper we will
consider the former case in the context of extrapolation, where $\alpha
(x)\geq 1$; see Section \ref{s-ELtO}.

\begin{lemma}
\label{th:SQNEequiv} Let $T:\mathcal{H}\rightarrow \mathcal{H}$ be an
operator with $\limfunc{Fix}T\neq \emptyset $, let $\alpha \colon \mathcal{H}%
\rightarrow (0,\infty )$ and $\rho \geq 0$. The following conditions are
equivalent:

\begin{enumerate}
\item[$(i)$] $T$ is $\rho $-SQNE.

\item[$(ii)$] $T_{\frac{\rho +1}{2}}$ is a cutter.

\item[$(iii)$] For all $x\in \mathcal{H}$ and all $z\in \limfunc{Fix}T$,  we have
\begin{equation}
\langle Tx-x,z-x\rangle \geq \frac{\rho +1}{2}\Vert Tx-x\Vert ^{2}.
\label{eq:SQNEequiv:ineq1}
\end{equation}

\item[$(iv)$] For all $x\in \mathcal{H}$ and all $z\in \limfunc{Fix}T$,  we have
\begin{equation}
\Vert T_{\alpha }x-z\Vert ^{2}\leq \Vert x-z\Vert ^{2}-\left( \frac{\rho }{%
\alpha (x)}+\frac{1-\alpha (x)}{\alpha (x)}\right) \Vert T_{\alpha }x-x\Vert
^{2}.  \label{eq:SQNEequiv:ineq2}
\end{equation}
\end{enumerate}
\end{lemma}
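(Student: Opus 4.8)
**

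The plan is to prove the equivalence by establishing a cycle of implications, working from the definitions and exploiting the algebraic identity relating $T_\alpha x - x$ to $Tx - x$. Before starting, I would record the basic identity $T_\alpha x - x = \alpha(x)(Tx - x)$, which follows immediately from the definition \eqref{e-genRel}; this lets me convert between norms of $T_\alpha x - x$ and $Tx - x$ at will, since $\|T_\alpha x - x\|^2 = \alpha(x)^2 \|Tx-x\|^2$. I would also note that $\limfunc{Fix} T_\alpha = \limfunc{Fix} T$ because $\alpha(x) > 0$, so the fixed-point sets appearing in all four conditions coincide.

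First I would show the equivalence of $(i)$ and $(iii)$, since these are the cleanest. Starting from the expansion
\begin{equation}
\|Tx - z\|^2 = \|x - z\|^2 - 2\langle Tx - x, z - x\rangle + \|Tx - x\|^2,
\end{equation}
I would substitute into the $\rho$-SQNE inequality \eqref{eq:sqne} and rearrange; the condition $\|Tx-z\|^2 \leq \|x-z\|^2 - \rho\|Tx-x\|^2$ becomes exactly $2\langle Tx - x, z - x\rangle \geq (\rho+1)\|Tx-x\|^2$, which is \eqref{eq:SQNEequiv:ineq1}. This step is a direct computation and each implication is reversible, so $(i) \Leftrightarrow (iii)$.

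Next I would connect $(iii)$ with $(ii)$. The cutter condition \eqref{eq:cutter} for $T_\alpha$ with the specific relaxation $\alpha = \frac{\rho+1}{2}$ reads $\langle z - T_\alpha x, x - T_\alpha x\rangle \leq 0$; rewriting $x - T_\alpha x = -\frac{\rho+1}{2}(Tx-x)$ and $z - T_\alpha x = (z-x) + \frac{\rho+1}{2}(Tx-x)$, I would expand the inner product and divide by the positive constant $\frac{\rho+1}{2}$ to recover \eqref{eq:SQNEequiv:ineq1}. Thus $(ii) \Leftrightarrow (iii)$. Finally, for $(iv)$, I would expand $\|T_\alpha x - z\|^2$ using $T_\alpha x - z = (x-z) + \alpha(x)(Tx-x)$, obtaining
\begin{equation}
\|T_\alpha x - z\|^2 = \|x-z\|^2 + 2\alpha(x)\langle Tx-x, x-z\rangle + \alpha(x)^2\|Tx-x\|^2,
\end{equation}
then substitute the lower bound from $(iii)$, namely $\langle Tx-x, x-z \rangle \leq -\frac{\rho+1}{2}\|Tx-x\|^2$, and re-express everything in terms of $\|T_\alpha x - x\|^2 = \alpha(x)^2\|Tx-x\|^2$ to match the coefficient in \eqref{eq:SQNEequiv:ineq2}. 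This shows $(iii) \Rightarrow (iv)$, and taking $\alpha(x) \equiv \frac{\rho+1}{2}$ (or any fixed value) in $(iv)$ and reversing the computation gives $(iv) \Rightarrow (iii)$, closing the cycle.

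I do not anticipate a serious obstacle here, as the whole argument is bookkeeping with the cosine-rule expansion of a squared norm. The one point requiring care is the coefficient in $(iv)$: the bound must be shown to hold \emph{for the given} $\alpha(\cdot)$, so when deriving $(iv)$ from $(iii)$ I must keep the substitution exact rather than specialize $\alpha$ prematurely, and the messy-looking factor $\frac{\rho}{\alpha(x)} + \frac{1-\alpha(x)}{\alpha(x)}$ should emerge naturally once $\|Tx-x\|^2$ is rewritten as $\alpha(x)^{-2}\|T_\alpha x-x\|^2$. Conversely, to extract $(iii)$ back out of $(iv)$ it suffices to pick any single admissible constant relaxation, so the implication does not require $(iv)$ to hold for all relaxation functions simultaneously.
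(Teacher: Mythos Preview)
Your argument is correct in substance and considerably more explicit than the paper's own proof, which simply cites \cite[Corollary 2.1.43 and Remark 2.1.31]{Ceg12} for (i)$\Leftrightarrow$(ii)$\Leftrightarrow$(iii) and \cite[Corollary 2.3]{BKRZ18} for (i)$\Leftrightarrow$(iv). The underlying computations in those references are exactly the cosine-rule manipulations you describe, so there is no real methodological difference; you have unpacked what the paper outsources.

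Two small points to tighten. First, in the step (ii)$\Leftrightarrow$(iii) you write $z - T_\alpha x = (z-x) + \frac{\rho+1}{2}(Tx-x)$, but since $T_\alpha x = x + \alpha(Tx-x)$ the sign on the second term should be $-$; with the sign corrected the expansion indeed collapses to \eqref{eq:SQNEequiv:ineq1}. Second, for (iv)$\Rightarrow$(iii) you propose to ``take $\alpha(x)\equiv\frac{\rho+1}{2}$'', but $\alpha$ is a \emph{given} function in the hypothesis of the lemma, not one you may choose. Fortunately your own derivation of (iii)$\Rightarrow$(iv) uses only reversible algebra (expand the square, substitute, divide through by $\alpha(x)>0$), so running it backwards for the given $\alpha$ yields (iii) directly; no specialization of $\alpha$ is needed or permitted.
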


\begin{proof}
See \cite[Corollary 2.1.43 and Remark 2.1.31]{Ceg12} for  the  equivalences
between (i), (ii) and (iii). The implication (i)$\Rightarrow $(iv) can be
found in \cite[Corollary 2.3 ]{BKRZ18}. Actually, slightly adjusting the
proof in \cite{BKRZ18}, one can deduce that the reverse implication is also
true.
\end{proof}

\begin{theorem}
\label{t-SQNE} Let $U_{i}\colon \mathcal{H}\rightarrow \mathcal{H}$ be $\rho
_{i}$-SQNE, $i=1,\ldots ,m$. Define the product operator $P:=\prod_{i=1}^{m}U_{i} :=U_{m}U_{m-1}...U_{1}$. Assume that $\rho :=\min_{i}\rho _{i}>0$ and $F:=\bigcap_{i=1}^{m}%
\limfunc{Fix}U_{i}\neq \emptyset $. Then $P$ is $(\rho /m)$-SQNE and $F=%
\limfunc{Fix}P$.
\end{theorem}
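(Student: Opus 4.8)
The plan is to track the orbit of an arbitrary point under the successive factors and to exploit the strong quasi-nonexpansiveness of each one through a single telescoping estimate. Fix $x\in\mathcal{H}$ and set $x_{0}:=x$ and $x_{i}:=U_{i}x_{i-1}$ for $i=1,\ldots ,m$, so that $x_{m}=Px$. First I would write down, for an arbitrary $z\in F$, the defining inequality of $\rho_{i}$-SQNE applied at $x_{i-1}$ (legitimate since $z\in F\subseteq\limfunc{Fix}U_{i}$), namely $\|x_{i}-z\|^{2}\le\|x_{i-1}-z\|^{2}-\rho_{i}\|x_{i}-x_{i-1}\|^{2}$, and sum these over $i=1,\ldots ,m$. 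The running terms $\|x_{i}-z\|^{2}$ telescope, leaving
\[
\|Px-z\|^{2}\le\|x-z\|^{2}-\sum_{i=1}^{m}\rho_{i}\|x_{i}-x_{i-1}\|^{2}\le\|x-z\|^{2}-\rho\sum_{i=1}^{m}\|x_{i}-x_{i-1}\|^{2},
\]
where the last inequality uses $\rho_{i}\ge\rho$. This one estimate drives both assertions.

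For the fixed-point identity, the inclusion $F\subseteq\limfunc{Fix}P$ is immediate, since every $z\in F$ is fixed by each $U_{i}$ and hence by $P$. For the reverse inclusion I would take $x\in\limfunc{Fix}P$, so that $x_{m}=x_{0}$, and feed this into the displayed estimate together with any $z\in F$ (available since $F\neq\emptyset$ by hypothesis). The leading terms on the two sides then cancel, forcing $\rho\sum_{i=1}^{m}\|x_{i}-x_{i-1}\|^{2}\le 0$; as $\rho>0$ this gives $x_{i}=x_{i-1}$ for every $i$. Unwinding the recursion yields $U_{1}x=x$, then $U_{2}x=x$, and so on, so that $x\in\bigcap_{i=1}^{m}\limfunc{Fix}U_{i}=F$, which establishes $F=\limfunc{Fix}P$.

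To pin down the constant $\rho/m$, I would bound the sum of squared increments from below by the total displacement $Px-x=x_{m}-x_{0}$. Writing $x_{m}-x_{0}=\sum_{i=1}^{m}(x_{i}-x_{i-1})$ and invoking the elementary inequality $\|\sum_{i=1}^{m}a_{i}\|^{2}\le m\sum_{i=1}^{m}\|a_{i}\|^{2}$ (convexity of $\|\cdot\|^{2}$, equivalently Cauchy--Schwarz) gives $\sum_{i=1}^{m}\|x_{i}-x_{i-1}\|^{2}\ge\frac{1}{m}\|Px-x\|^{2}$. Substituting this into the telescoping estimate produces exactly $\|Px-z\|^{2}\le\|x-z\|^{2}-\frac{\rho}{m}\|Px-x\|^{2}$ for all $z\in F=\limfunc{Fix}P$, which is the definition of $(\rho/m)$-SQNE.

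The argument is largely bookkeeping once the telescoping is in place; the only step needing a small idea is the passage to the constant $\rho/m$, where the factor $1/m$ arises precisely from the quadratic bound on the sum of increments. No compactness, nonexpansiveness, or convexity of the $U_{i}$ is needed beyond the SQNE hypotheses themselves, and the positivity $\rho>0$ is used exactly once, to deduce the fixed-point identity.
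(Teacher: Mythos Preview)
Your proof is correct. The paper does not actually prove this statement; it simply refers to \cite[Theorem 2.1.48]{Ceg12}. Your telescoping argument combined with the Cauchy--Schwarz bound $\|\sum_{i=1}^m a_i\|^2\le m\sum_{i=1}^m\|a_i\|^2$ is the standard way to obtain the constant $\rho/m$, and is essentially the argument found in that reference.
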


\begin{proof}
See \cite[Theorem 2.1.48]{Ceg12}.
\end{proof}

\subsection{Regular Families of Sets and Regular Operators}

The following definition can be found, for example, in \cite[Definition 5.1]%
{BB96} and \cite[Definition 5.7]{BNP15}.

\begin{definition}[Regular Sets]
\rm\ %
Let $S\subseteq \mathcal{H}$, $C_{i}\subseteq \mathcal{H}$, $i\in I:=\{1,\ldots ,m\}$, be closed  and  convex with $C:=\bigcap_{i\in I}C_{i}\neq \emptyset $ and let $\mathcal{C}%
:=\{C_{i}\mid i\in I\}$. We say that the family $\mathcal{C}$ is

\begin{enumerate}
\item[$\mathrm{(i)}$] \textit{regular} over $S$ if for any sequence $%
\{x_{k}\}_{k=0}^{\infty }\subseteq S$,  we have
\begin{equation*}
\lim_{k\rightarrow \infty }\max_{i\in I}d(x_{k},C_{i})=0\quad
\Longrightarrow \quad \lim_{k\rightarrow \infty }d(x_{k},C)=0\text{;}
\end{equation*}

\item[$\mathrm{(ii)}$] \textit{linearly regular} over $S$ if there is $%
\kappa _{S}>0$ such that for every $x\in S$,  we have
\begin{equation*}
d(x,C)\leq \kappa _{S}\max_{i\in I}d(x,C_{i})\text{.}
\end{equation*}%
The constant $\kappa _{S}$ is called a \textit{modulus} of the linear
regularity\textit{\ of }$\mathcal{C}$ over $S$.
\end{enumerate}

If any of the above regularity conditions holds for every subset $S\subseteq
\mathcal{H}$, then we simply omit the phrase \textquotedblleft over $S$". If
the same condition holds when restricted to bounded subsets $S\subseteq
\mathcal{H}$, then we precede the term with the adverb \textit{boundedly}.
\end{definition}

Below we list  a  few known examples of regular families of sets. For an extended list, see \cite{BB96} or \cite{BNP15}.

\begin{example}
\rm\ %
\textrm{\label{e-RegSets} Let $C_{i}\subseteq \mathcal{H}$, $i\in
I:=\{1,\ldots ,m\}$, be closed  and  convex with $C:=\bigcap_{i\in I}C_{i}\neq \emptyset $, and let $\mathcal{C}:=\{C_{i}\mid i\in I\}$. }

\begin{enumerate}
\item[$\mathrm{(i)}$] \textrm{If $\dim \mathcal{H}<\infty $, then $\mathcal{C%
}$ is boundedly regular; }

\item[$\mathrm{(ii)}$] \textrm{If all $C_{i}$, $i\in I$, are half-spaces,
then $\mathcal{C}$ is linearly regular; }

\item[$\mathrm{(iii)}$] \textrm{If $C_1\cap \limfunc{int}(\bigcap_{i=2}^m
C_{i})\neq \emptyset$, then $\mathcal{C}$ is boundedly linearly regular; }

\item[$\mathrm{(iv)}$] \textrm{If $\dim \mathcal{H}<\infty $, $C_{i}$ is a
half-space, $i=1,\ldots,p$, and $\bigcap_{i=1}^pC_{i}\cap \bigcap_{i=p+1}^m%
\limfunc{ri}C_{i}\neq \emptyset $, then $\mathcal{C}$ is boundedly linearly
regular. }
\end{enumerate}
\end{example}

\bigskip The following definition was introduced in \cite[Definitions 3.1
and 4.1]{CRZ18} (see also references therein for similar concepts).

\begin{definition}[Regular Operators]
\rm\ %
\label{def:Reg} Let $\{T_{k}\}_{k=0}^{\infty }$ be a sequence of operators $%
T_{k}\colon \mathcal{H}\rightarrow \mathcal{H}$ with $F:=\bigcap_{k=0}^{%
\infty }\limfunc{Fix}T_{k}\neq \emptyset $ and let $S\subseteq \mathcal{H}$
be nonempty. We say that $\{T_{k}\}_{k=0}^{\infty }$ is

\begin{enumerate}
\item[$\mathrm{(i)}$] \textit{weakly regular} over $S$ if for any sequence $%
\{x_{k}\}_{k=0}^{\infty }\subseteq S$ and for any  point  $x_{\infty }\in \mathcal{H}$,  we have
\begin{equation}
\left.
\begin{array}{l}
x_{n_{k}}\rightharpoonup x_{\infty } \\
T_{k}x_{k}-x_{k}\rightarrow 0%
\end{array}%
\right\} \quad \Longrightarrow \quad x_{\infty }\in F;  \label{eq:def:WReg}
\end{equation}

\item[$\mathrm{(ii)}$] \textit{regular} over $S$ if for any sequence $%
\{x_{k}\}_{k=0}^{\infty }\subseteq S$,  we have
\begin{equation}
\lim_{k\rightarrow \infty }\Vert T_{k}x_{k}-x_{k}\Vert =0\quad
\Longrightarrow \quad \lim_{k\rightarrow \infty }d(x_{k},F)=0;
\label{eq:def:Reg}
\end{equation}

\item[$\mathrm{(iii)}$] \textit{linearly regular} over $S$ if there is $%
\delta _{S}>0$ such that for every  point  $x\in S$,  we have
\begin{equation}
\Vert T_{k}x-x\Vert \geq \delta _{S}d(x,F).  \label{eq:def:LReg}
\end{equation}%
The constant $\delta _{S}$ is called a \textit{modulus} of the linear
regularity\textit{\ }of\textit{\ }$T$ over $S$.
\end{enumerate}

If any of the above regularity conditions holds for every subset $S\subseteq
\mathcal{H}$, then we simply omit the phrase \textquotedblleft over $S$". If
the same condition holds when restricted to bounded subsets $S\subseteq
\mathcal{H}$, then we precede the term with the adverb \textit{boundedly}.
Since there is no need to distinguish between boundedly weakly and weakly
regular operators, we call both weakly regular. If any of the above
regularities holds for a constant sequence with $T_{k}=T$ for some $T$, then
we simply refer to the regularity of the operator $T$ defined by the
corresponding regularity of the sequence $\{T\}_{k=0}^{\infty }$.
\end{definition}

Obviously the metric projection is linearly regular and thus (weakly) regular. The example
below shows that in some cases the subgradient projection  has similar properties.  The proof of part (i) can be found, for example, in
\cite[Theorem 4.2.7]{Ceg12}). The proof of part (iii) in the finite
dimensional setting can be found in \cite[Example 3.5 (iii)]{CRZ18}, which we
extend here to the infinite dimensional case.

\begin{example}[Regularity of Subgradient Projection]
\rm\ %
\label{e-subProjReg} \textrm{Let $P_{f}$ be a subgradient projection  as  defined in Example \ref{e-subProj}. Assume that $%
\partial f$ is uniformly bounded on bounded sets (see Remark \ref%
{r-subdifferential}). Then the following statements hold: }

\begin{enumerate}
\item[$\mathrm{(i)}$] \textrm{$P_{f}$ is weakly regular. }

\item[$\mathrm{(ii)}$] \textrm{If $f$ is $\alpha $-strongly convex, where $%
\alpha >0$, then $P_{f}$ is boundedly regular. }

\item[$\mathrm{(iii)}$] \textrm{If $f(z)<0$ for some $z$, then $P_{f}$ is
boundedly linearly regular. }
\end{enumerate}

\textrm{In particular, if $\mathcal{H}$ is finite dimensional, then $%
\partial f$ is uniformly bounded on bounded sets and by the equivalence
between weak and strong convergence, $P_f$ is boundedly regular. }
\end{example}

\begin{proof}
Let $B(z,r)$ be a ball with center $z\in S(f,0)$ and radius $r>0$. In order to show the weak/bounded/bounded linear regularity of $P_{f}$, it suffices to show the
corresponding regularity over $B(z,r)$ for arbitrary $r>0$.

Before proceeding, let us observe that for any $x\in S(f,0)$, we have
\begin{equation}
0=f_{+}(x)=\Vert P_{f}x-x\Vert =d(x,S(f,0)).  \label{pr-subProjReg-zero}
\end{equation}%
Moreover, by assumption, for any $r>0$, there is $M>0$ such that $\Vert g(x)\Vert \leq M$ for all $x\in B(z,r)$ and all $g(x)\in \partial f(x)$. Hence, for all $x\in B(z,r)$ such that $f(x)>0$, we have
\begin{equation}
\Vert P_{f}x-x\Vert \geq \frac{f(x)}{\Vert g(x)\Vert }\geq \frac{f_{+}(x)}{M}.
\label{pr-subProjReg-M}
\end{equation}

\textit{Part (i). } Let $r>0$ be arbitrary. Assume that $B(z,r)\ni x_{n_{k}}\rightharpoonup x_{\infty }$ and $\Vert
P_{f}x_{k}-x_{k}\Vert \rightarrow 0$. By combining \eqref{pr-subProjReg-zero}
and \eqref{pr-subProjReg-M}, we get $f_{+}(x_{k})\rightarrow 0$.  Since a lower semi-continuous and convex function is weakly lower semi-continuous \cite[Theorem 9.1]{BC17}, we have $0=\liminf_{k}f_{+}(x_{n_{k}}) \geq \liminf_{k}f(x_{n_{k}})\geq f(x_{\infty })$, that is, $x_{\infty }\in \limfunc{Fix}P_{f}$. This shows that $P_{f}$ is
weakly regular over $B(z,r)$.

\textit{Part (ii).} Let $r>0$ be arbitrary. Recall that $f$ is $\alpha $-strongly convex if
\begin{equation}
f(\lambda x+(1-\lambda )y)\leq \lambda f(x)+(1-\lambda )f(y)-\frac{\alpha }{2%
}\lambda (1-\lambda )\Vert x-y\Vert ^{2}
\end{equation}%
for all $x,y\in \mathcal{H}$ and $\lambda \in \lbrack 0,1]$. In particular,
for any $x\notin S(f,0)$, by setting $y=P_{S(f,0)}x$ and $\lambda =\frac{1}{2%
}$, we have
\begin{equation}
f_{+}(x)=f(x)\geq 2\underbrace{f\left( \frac{1}{2}x+\frac{1}{2}%
P_{S(f,0)}x\right) }_{>0}-\underbrace{f(P_{S(f,0)}x)}_{=0}+\frac{\alpha }{4}\Vert x-P_{S(f,0)}x\Vert ^{2}\geq \frac{\alpha }{4}d^{2}(x,S(f,0)).  \label{pr-subProjReg-strongConv}
\end{equation}%
This, when combined with \eqref{pr-subProjReg-zero} and %
\eqref{pr-subProjReg-M}, easily leads to  the  regularity of $P_{f}$ over $B(z,r)$.

\textit{Part (iii).} Assume that $f(z)<0$ and let $x\in B(z,r)$ be such that
$f(x)>0$. Define $y:=\lambda z+(1-\lambda )x$, where $\lambda :=\frac{f(x)}{%
f(x)-f(z)}>0$. Then $y\in S(f,0)$ and, by the definition of the metric
projection,  we have
\begin{equation}
d(x,S(f,0))\leq \Vert x-y\Vert =\lambda \Vert x-z\Vert \leq \frac{f_{+}(x)}{%
-f(z)}r.
\end{equation}%
The inequality above, when combined with \eqref{pr-subProjReg-zero} and %
\eqref{pr-subProjReg-M}, leads to linear regularity of $P_{f}$ over $B(z,r)$.
\end{proof}

\begin{remark}
\rm\ %
\label{r-subdifferential} Let $f\colon \mathcal{H}\rightarrow
\mathbb{R}$ be lower semi-continuous and convex. We recall that $\partial f$ is uniformly bounded on bounded sets if and only if $f$ is Lipschitz continuous on bounded sets,  a condition  which holds if and only if $f$ maps bounded sets onto bounded sets; see, for example, \cite[Proposition 7.8]{BB96}. The above holds true, in particular, when $\mathcal{H}=\mathbb{R}^{n}$.
\end{remark}

 The  theorem below summarizes some of the properties of regular operators discussed in \cite{CRZ18}. We apply this theorem in Section \ref{s-SCFP}
only for two sequences while studying the convergence of various $CQ$-type methods.

\begin{theorem}
\label{t-regOper} Let $\{U_{1}^{k}\}_{k=0}^{\infty },\ldots, \{U_{m}^{k}\}_{k=0}^{\infty }$ be given sequences of $\rho _{i}^{k}$-SQNE operators $U_{i}^{k}\colon \mathcal{H}\rightarrow \mathcal{H}$ with $\limfunc{Fix}U_{i}^{k}=F_{i}$, $i=1,2,...,m$, (in particular each sequence can be  a  constant consisting only of one operator $U_{i}$). Define the product operator $P_{k}$ by
\begin{equation}
P_{k}:=\prod_{i=1}^{m}U_{i}^{k}.
\end{equation}%
Assume that $\rho :=\inf_{i,k}\rho _{i}^{k}>0$ and $F:=\bigcap_{i=1}^{m}F_{i}\neq \emptyset $ ($P_{k}$ is $(\rho /m)$-SQNE and $F=\limfunc{Fix}P_{k}$ by Theorem \ref{t-SQNE}). Let $B=B(z,r)$ for some $z\in F$ and $r>0$. Then the following statements
hold:

\begin{enumerate}
\item[$\mathrm{(i)}$] If for each $i=1,\ldots,m$, the sequence $%
\{U_i^k\}_{k=0}^\infty$ is weakly regular over $B$, then $%
\{P_k\}_{k=0}^\infty$ is also weakly regular over $B$.

\item[$\mathrm{(ii)}$] If for each $i=1,\ldots,m$, the sequence $%
\{U_i^k\}_{k=0}^\infty$ is regular over $B$ and the family of sets $%
\{F_1,\ldots,F_m\}$ is regular over $B$, then $\{P_k\}_{k=0}^\infty$ is also
regular over $B$.

\item[$\mathrm{(iii)}$] If for each $i=1,\ldots,m$, the sequence $%
\{U_i^k\}_{k=0}^\infty$ is linearly regular over $B$ with modulus $\delta_i$ and the family of sets $\{F_1,\ldots,F_m\}$ is linearly regular over $B$
with modulus $\kappa>0$, then $\{P_k\}_{k=0}^\infty$ is also linearly
regular over $B$ with modulus
\begin{equation}
\delta_P=\frac{\rho \delta^2}{2m\kappa^2},
\end{equation}
where $\delta:=\min_i\delta_i.$
\end{enumerate}
\end{theorem}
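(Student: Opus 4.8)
The plan is to run all three parts off a single telescoping estimate for the product. Fix $k$ and a point $x$, and set $x^{(0)}:=x$ and $x^{(i)}:=U_i^k x^{(i-1)}$ for $i=1,\dots,m$, so that $x^{(m)}=P_k x$. Since each $U_j^k$ is $\rho_j^k$-SQNE and $z\in F\subseteq F_j=\mathrm{Fix}\,U_j^k$, the inequalities $\|x^{(i)}-z\|^2\le\|x^{(i-1)}-z\|^2-\rho\|x^{(i)}-x^{(i-1)}\|^2$ telescope to
\begin{equation}
\rho\sum_{i=1}^m\|x^{(i)}-x^{(i-1)}\|^2\le\|x-z\|^2-\|P_kx-z\|^2\le 2\|x-z\|\,\|P_kx-x\|,
\end{equation}
where the last step uses $\big|\|x-z\|-\|P_kx-z\|\big|\le\|P_kx-x\|$ together with $\|P_kx-z\|\le\|x-z\|$. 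Two facts fall out at once: first, each partial product is quasi-nonexpansive with fixed point $z$, so $\|x^{(i)}-z\|\le\|x-z\|$ and every intermediate iterate remains in $B=B(z,r)$; second, taking $x=x_k\in B$ (so $\|x_k-z\|\le r$) shows that $\|P_kx_k-x_k\|\to 0$ forces $\sum_i\|x_k^{(i)}-x_k^{(i-1)}\|^2\to 0$, hence $\|U_i^kx_k^{(i-1)}-x_k^{(i-1)}\|\to 0$ for every $i$. These are the engines for (i)--(iii).

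For (i), suppose $\{x_k\}\subseteq B$ with $P_kx_k-x_k\to 0$ and a subsequence $x_{n_k}\rightharpoonup x_\infty$. For each $i$ put $w_k:=x_k^{(i-1)}\in B$; then $U_i^kw_k-w_k=x_k^{(i)}-x_k^{(i-1)}\to 0$, while $\|x_{n_k}^{(i-1)}-x_{n_k}\|\le\sum_{j<i}\|x_{n_k}^{(j)}-x_{n_k}^{(j-1)}\|\to 0$ strongly, so $w_{n_k}=x_{n_k}^{(i-1)}\rightharpoonup x_\infty$. Weak regularity of $\{U_i^k\}$ over $B$ gives $x_\infty\in F_i$, and intersecting over $i$ yields $x_\infty\in F$. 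Part (ii) is the same bookkeeping with distances: from $\|U_i^kx_k^{(i-1)}-x_k^{(i-1)}\|\to 0$ and regularity of $\{U_i^k\}$ over $B$ we get $d(x_k^{(i-1)},F_i)\to 0$, and since $|d(x_k^{(i-1)},F_i)-d(x_k,F_i)|\le\|x_k^{(i-1)}-x_k\|\to 0$ we obtain $\max_i d(x_k,F_i)\to 0$; regularity of the family $\{F_1,\dots,F_m\}$ over $B$ then delivers $d(x_k,F)\to 0$.

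For (iii) I would feed the two linear-regularity hypotheses, in quantitative form, into the same estimate. Fix $x\in B$ and $k$, choose $z:=P_Fx$ so that $\|x-z\|=d(x,F)$, and write $s_i:=\|x^{(i)}-x^{(i-1)}\|$. Individual linear regularity gives $s_i\ge\delta\,d(x^{(i-1)},F_i)$, and combined with $d(x,F_i)\le\|x-x^{(i-1)}\|+d(x^{(i-1)},F_i)\le\sum_{j<i}s_j+\tfrac1\delta s_i$ this yields (using $\delta\le 1$, which we may assume since the hypotheses persist after replacing each $\delta_i$ by $\min\{\delta_i,1\}$) the bound $\max_i d(x,F_i)\le\tfrac1\delta\sum_{i=1}^m s_i$. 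Inserting this into $d(x,F)\le\kappa\max_i d(x,F_i)$ and applying Cauchy--Schwarz $\sum_i s_i\le\sqrt m\,(\sum_i s_i^2)^{1/2}$ gives $\sum_i s_i^2\ge\frac{\delta^2}{m\kappa^2}d(x,F)^2$. Substituting into $2\,d(x,F)\,\|P_kx-x\|\ge\rho\sum_i s_i^2$ and cancelling one factor of $d(x,F)$ produces $\|P_kx-x\|\ge\frac{\rho\delta^2}{2m\kappa^2}d(x,F)$, the asserted modulus.

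The telescoping estimate and the two limit-transfer arguments in (i) and (ii) are routine; the one place demanding care is the constant in (iii), where the individual linear-regularity bounds involve the intermediate iterates $x^{(i-1)}$ rather than $x$ itself. The resulting ``drift'' terms $\sum_{j<i}s_j$ must be reabsorbed into $\sum_i s_i$ before Cauchy--Schwarz, and it is exactly this reabsorption that produces the factor $1/m$ and forces the normalization $\delta\le 1$ in the final modulus $\rho\delta^2/(2m\kappa^2)$.
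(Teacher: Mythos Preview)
Your argument is correct and self-contained, whereas the paper does not prove this theorem at all: it simply records that the statement is a particular case of \cite[Corollary~5.5]{CRZ18}. So you are supplying a direct proof where the paper defers to an external reference. The telescoping inequality $\rho\sum_i s_i^2\le\|x-z\|^2-\|P_kx-z\|^2$, the observation that all intermediate iterates $x^{(i)}$ remain in $B$, and the limit-transfer arguments in parts (i) and (ii) are exactly the natural steps and are carried out cleanly; these are in fact the same ingredients that underlie the cited result in \cite{CRZ18}.

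One small caveat in part~(iii): your normalization ``we may assume $\delta\le 1$'' is harmless for the \emph{qualitative} conclusion but does not quite recover the announced constant when $\delta>1$. Replacing each $\delta_i$ by $\min\{\delta_i,1\}$ yields the modulus $\rho\min\{\delta,1\}^2/(2m\kappa^2)$, which is strictly smaller than $\rho\delta^2/(2m\kappa^2)$ in that range. Since for a $\rho$-SQNE operator one always has $\|Tx-x\|\le\tfrac{2}{\rho+1}d(x,\mathrm{Fix}\,T)$, the modulus satisfies $\delta\le\tfrac{2}{\rho+1}$, so $\delta>1$ can only occur when $\rho<1$; but it is not excluded by the hypotheses. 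This is a minor discrepancy in the constant, not a gap in the method, and the paper itself does not track the constant independently here.
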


\begin{proof}
 This  theorem is a particular case of \cite[Corollary 5.5]{CRZ18}.
\end{proof}

\begin{remark}[Erratum to \protect\cite{CRZ18}]
\rm\ %
\textrm{\ Note that  in contrast  to Definition \ref{def:Reg}(iii), in \cite%
{CRZ18} the constant $\delta _{S}^{-1}$ was called a modulus of the linear
regularity of $T$ over $S$. Consequently, in all the results of Section 5 in
\cite{CRZ18} one should use \textquotedblleft $\delta :=\max_{i}\delta _{i}$%
\textquotedblright\ instead of \textquotedblleft $\delta :=\min_{i}\delta
_{i}$\textquotedblright . The statement of Theorem \ref{t-regOper}  corrects this  unfortunate misprint. }
\end{remark}

\section{Closed Range Theorem}

\label{s-ClRThm}

Let $\mathcal{H}_{1}$ and $\mathcal{H}_{2}$ be two Hilbert spaces, $A:%
\mathcal{H}_{1}\rightarrow \mathcal{H}_{2}$ be a nonzero bounded linear
operator and $A^{\ast }:\mathcal{H}_{2}\rightarrow \mathcal{H}_{1}$ be its
adjoint operator defined by
\begin{equation*}
\langle Ax,y\rangle =\langle x,A^{\ast }y\rangle \qquad \text{for all }x\in
\mathcal{H}_{1}\text{ and all }y\in \mathcal{H}_{2}\text{.}
\end{equation*}%
We denote: $\ker A:=\{x\in \mathcal{H}_{1}\mid Ax=0\}$ -- the \textit{kernel}
(\textit{null space}) of $A$, $\limfunc{im}A:=\{y\in \mathcal{H}_{2}\mid
Ax=y $ for some $x\in \mathcal{H}_{1}\}$ -- the \textit{image} (\textit{range%
}) of $A$, $\limfunc{cl}C$ -- the \textit{closure} of a subset $C\subseteq
\mathcal{H}_{1}$, $V^{\bot }:=\{y\in \mathcal{H}_{1}\mid \langle x,y\rangle
=0$ for all $x\in V\}$ -- the \textit{orthogonal complement} of a subspace $%
V\subseteq \mathcal{H}_{1}$. It is not difficult to see that
\begin{equation}
\ker A=\ker A^{\ast }A\qquad \text{and}\qquad \ker A^{\ast }=\ker AA^{\ast }.
\label{e-kerAA*}
\end{equation}%
Moreover, it is well known that
\begin{equation}
(\ker A)^{\bot }=\limfunc{cl}(\limfunc{im}A^{\ast })\qquad \text{and}\qquad
(\ker A^{\ast })^{\bot }=\limfunc{cl}(\limfunc{im}A)\text{;}
\label{e-kerA_imA*}
\end{equation}%
see, for example, \cite[Lemma 8.33]{Deu01}. In what follows the following
property  turns out to  be useful.

\begin{proposition}
\label{l-bij}The operator $A^{\#}:(\ker A)^{\bot }\rightarrow \limfunc{im}A$
defined by $A^{\#}:=A\mid _{(\ker A)^{\bot }}$ is a bijection.
\end{proposition}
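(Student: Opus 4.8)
The plan is to verify injectivity and surjectivity of $A^{\#}$ separately, relying on the orthogonal decomposition of $\mathcal{H}_{1}$ induced by the closed subspace $\ker A$. The whole argument rests on two elementary facts: that $\ker A$ is closed (because $A$ is bounded, hence continuous), so that $\mathcal{H}_{1}=\ker A\oplus(\ker A)^{\bot}$ orthogonally, and that $\ker A\cap(\ker A)^{\bot}=\{0\}$.

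First I would establish injectivity. Suppose $x\in(\ker A)^{\bot}$ satisfies $A^{\#}x=Ax=0$. Then $x\in\ker A$, so $x$ lies in both $\ker A$ and its orthogonal complement; since $\ker A\cap(\ker A)^{\bot}=\{0\}$, it follows that $x=0$. Because $A^{\#}$ is linear, this shows it is injective.

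Next I would establish that $A^{\#}$ maps $(\ker A)^{\bot}$ \emph{onto} $\limfunc{im}A$. The range of $A^{\#}$ is trivially contained in $\limfunc{im}A$, so only the reverse inclusion needs proof. Given any $y\in\limfunc{im}A$, choose $x\in\mathcal{H}_{1}$ with $Ax=y$ and decompose $x=u+v$ with $u\in\ker A$ and $v\in(\ker A)^{\bot}$, which is legitimate precisely because $\ker A$ is closed. Then $A^{\#}v=Av=A(x-u)=Ax-Au=y$, so $y$ belongs to the range of $A^{\#}$, giving surjectivity.

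There is essentially no serious obstacle here; the single point that requires care is the availability of the orthogonal decomposition $\mathcal{H}_{1}=\ker A\oplus(\ker A)^{\bot}$, which is exactly where the boundedness of $A$ enters (through the closedness of $\ker A$). Once this is in place, both halves follow directly from the defining property of the orthogonal complement, and combining injectivity with surjectivity yields the claimed bijection.
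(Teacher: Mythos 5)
Your proposal is correct and follows essentially the same route as the paper: surjectivity via the orthogonal decomposition $x=u+v$ with $u\in\ker A$, $v\in(\ker A)^{\bot}$, and injectivity from $\ker A\cap(\ker A)^{\bot}=\{0\}$ (the paper phrases this with two preimages $x_1,x_2$ rather than a trivial kernel, but the argument is identical). Your explicit remark that closedness of $\ker A$ underwrites the decomposition is a welcome detail the paper leaves implicit.
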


\begin{proof}
To see that $A^{\#}$ is surjective, let $y\in \limfunc{im}A$ and $x\in
\mathcal{H}_{1}$ be such that $y=Ax$. Then $x=x^{\prime }+x^{\prime \prime }$
with $x^{\prime }\in \ker A$ and $x^{\prime \prime }\in (\ker A)^{\bot }$.
We have $y=Ax=A(x^{\prime }+x^{\prime \prime })=Ax^{\prime \prime
}=A^{\#}x^{\prime \prime }$, which proves that $\limfunc{im}A^{\#}=\limfunc{%
im}A$. To see that $A^{\#}$ is injective, assume that $A^{\#}x_{1}=A^{%
\#}x_{2}$ for some $x_{1},x_{2}\in (\ker A)^{\bot }$. Then $%
A^{\#}(x_{1}-x_{2})=A(x_{1}-x_{2})=0$, that is, $x_{1}-x_{2}\in \ker A$.
Since we also have $x_{1}-x_{2}\in (\ker A)^{\bot }$ and $\ker A\cap (\ker
A)^{\bot }=\{0\}$, it follows that $x_{1}=x_{2}$. This proves that $A^{\#}$
is a bijection.
\end{proof}

\bigskip By $\Vert A\Vert :=\sup \{\Vert Ax\Vert \mid x\in \mathcal{H}%
_{1},\Vert x\Vert =1\}$ we denote the norm of the operator $A$. It is not
difficult to see that the norm of $A$ satisfies
\begin{equation}
\Vert A\Vert =\sup \{\Vert Ax\Vert \mid x\in (\ker A)^{\bot },\Vert x\Vert
=1\}\text{.}
\end{equation}%
Moreover,
\begin{equation}
\Vert A\Vert =\Vert A^{\ast }\Vert =\sqrt{\Vert A^{\ast }A\Vert }=\sqrt{%
\Vert AA^{\ast }\Vert }\text{;}  \label{e-normA*A}
\end{equation}%
see, for example, \cite[Theorem 8.25]{Deu01}. Analogously, we define
\begin{equation}
\left\vert A\right\vert :=\inf \{\Vert Ax\Vert \mid x\in (\ker A)^{\bot
},\Vert x\Vert =1\}\text{.}  \label{e-Ainf}
\end{equation}%
Clearly, for any $x\in (\ker A)^{\bot }$, we have
\begin{equation}
\left\vert A\right\vert \cdot \Vert x\Vert \leq \Vert Ax\Vert \leq \Vert
A\Vert \cdot \Vert x\Vert \text{,}  \label{e-Ainf2}
\end{equation}%
where $\left\vert A\right\vert $ and $\Vert A\Vert $ are the largest and the
smallest constants, respectively, for which the above inequalities hold. The
following lemma provides a list of equivalent conditions for $\left\vert
A\right\vert $ to be greater than zero. The equivalence (i)$\Leftrightarrow $%
(iv) is known as the \textit{closed range theorem } \cite[Chapter VII,
Section 5]{Yos78}.

\begin{lemma}[Closed Range Theorem]
\label{l-clR}Let $A:\mathcal{H}_{1}\rightarrow \mathcal{H}_{2}$ be a nonzero
bounded linear operator. The following statements are equivalent:
\begin{multicols}{3}
\begin{enumerate}
    \item[$\mathrm{(i)}$] $\limfunc{im}A$ is closed;
    \item[$\mathrm{(iv)}$] $\limfunc{im}A^{\ast }$ is closed;
    \item[$\mathrm{(vii)}$] $\limfunc{im}AA^{\ast }$ is closed;
    \item[$\mathrm{(x)}$] $\limfunc{im}A^{\ast }A$ is closed;

    \item[$\mathrm{(ii)}$] $\limfunc{im}A=(\ker A^{\ast })^{\bot }$;
    \item[$\mathrm{(v)}$] $\limfunc{im}A^{\ast }=(\ker A)^{\bot }$;
    \item[$\mathrm{(viii)}$] $\limfunc{im}AA^{\ast }=(\ker A A^{\ast })^{\bot }$;
    \item[$\mathrm{(xi)}$] $\limfunc{im}A^{\ast }A=(\ker A^{\ast } A)^{\bot }$;

    \item[$\mathrm{(iii)}$] $\left\vert A\right\vert >0$;
    \item[$\mathrm{(vi)}$] $\left\vert A^{\ast }\right\vert >0$;
    \item[$\mathrm{(ix)}$] $\left\vert A A^{\ast }\right\vert >0$;
    \item[$\mathrm{(xii)}$] $\left\vert A^{\ast }A\right\vert >0$.
\end{enumerate}
\end{multicols}\noindent Moreover, if any of the above conditions holds,
then (compare with \eqref{e-kerAA*})
\begin{equation}
\limfunc{im}AA^{\ast }=\limfunc{im}A\qquad \text{and}\qquad \limfunc{im}%
A^{\ast }A=\limfunc{im}A^{\ast }.  \label{e-imAA*}
\end{equation}%
Furthermore, in this case $A^{\#}$ has a bounded inverse and (compare with %
\eqref{e-normA*A})
\begin{equation}
\left\vert A\right\vert =\left\vert A^{\ast }\right\vert =\sqrt{\left\vert
AA^{\ast }\right\vert }=\sqrt{\left\vert A^{\ast }A\right\vert }=\frac{1}{%
\Vert (A^{\#})^{-1}\Vert }\text{.}  \label{e-AB}
\end{equation}
\end{lemma}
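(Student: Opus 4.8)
The plan is to organize the twelve conditions into the three visible families---closedness of a range, the range equalling an orthogonal complement, and positivity of $|\cdot|$---to prove the cheap equivalences inside each family, and then to bridge the family coming from $A$ with the one coming from $A^{\ast}$. Two engines drive everything: Proposition \ref{l-bij} together with the open mapping theorem on the one hand, and the standard identities \eqref{e-kerAA*} and \eqref{e-kerA_imA*} on the other.

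First I would settle the block (i)$\Leftrightarrow$(iii). If $|A|>0$ then $A^{\#}$ is bounded below, so it carries the complete space $(\ker A)^{\bot}$ onto a complete, hence closed, subspace, giving (i); conversely, if $\limfunc{im}A$ is closed then $A^{\#}$ is a continuous bijection between Hilbert spaces, the open mapping theorem yields a bounded inverse, and $\Vert Ax\Vert\geq\Vert(A^{\#})^{-1}\Vert^{-1}\Vert x\Vert$ on $(\ker A)^{\bot}$ gives $|A|>0$. The same computation identifies $|A|=\Vert(A^{\#})^{-1}\Vert^{-1}$ and establishes the boundedness of $(A^{\#})^{-1}$. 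Running this verbatim for $A^{\ast}$, $AA^{\ast}$ and $A^{\ast}A$ produces (iv)$\Leftrightarrow$(vi), (vii)$\Leftrightarrow$(ix) and (x)$\Leftrightarrow$(xii). The ``orthogonal complement'' equivalences (i)$\Leftrightarrow$(ii), (iv)$\Leftrightarrow$(v), (vii)$\Leftrightarrow$(viii), (x)$\Leftrightarrow$(xi) are then immediate from \eqref{e-kerA_imA*}, which gives $(\ker A^{\ast})^{\bot}=\limfunc{cl}(\limfunc{im}A)$, together with \eqref{e-kerAA*} and the self-adjointness of $AA^{\ast}$, $A^{\ast}A$, which yields $\limfunc{cl}(\limfunc{im}AA^{\ast})=(\ker AA^{\ast})^{\bot}$ and similarly for $A^{\ast}A$.

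At this stage I would knit the two sides together. The cross-link (iii)$\Leftrightarrow$(xii) is elementary: for a unit $x\in(\ker A)^{\bot}=(\ker A^{\ast}A)^{\bot}$ one has $\Vert A^{\ast}Ax\Vert\geq\langle A^{\ast}Ax,x\rangle=\Vert Ax\Vert^{2}$, so $|A^{\ast}A|\geq|A|^{2}$, while if $|A|=0$ a minimizing sequence satisfies $\Vert A^{\ast}Ax_{n}\Vert\leq\Vert A\Vert\,\Vert Ax_{n}\Vert\to 0$; applying this to $A^{\ast}$ gives (vi)$\Leftrightarrow$(ix). To bridge the two families I would prove (x)$\Rightarrow$(iv): since $\limfunc{im}A^{\ast}A\subseteq\limfunc{im}A^{\ast}$ while $\limfunc{cl}(\limfunc{im}A^{\ast}A)=(\ker A)^{\bot}=\limfunc{cl}(\limfunc{im}A^{\ast})$, closedness of $\limfunc{im}A^{\ast}A$ forces $\limfunc{im}A^{\ast}A=\limfunc{im}A^{\ast}$, which is therefore closed. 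Reading this with $A^{\ast}$ in place of $A$ yields (vii)$\Rightarrow$(i), and these two implications connect $\{(\mathrm{i}),(\mathrm{ii}),(\mathrm{iii}),(\mathrm{x}),(\mathrm{xi}),(\mathrm{xii})\}$ with $\{(\mathrm{iv}),\dots,(\mathrm{ix})\}$, closing the equivalence. The image identities \eqref{e-imAA*} then drop out of (ii), (viii), (xi) and \eqref{e-kerAA*}.

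The genuinely technical point is the chain $|A|=|A^{\ast}|=\sqrt{|AA^{\ast}|}=\sqrt{|A^{\ast}A|}$, since the inequalities above give only one direction. Here I would pass to spectral information: $A^{\ast}A$ and $AA^{\ast}$ are self-adjoint and positive with the same nonzero spectrum, and for such an operator $B$ one checks that the defining infimum $|B|=\inf\{\Vert Bx\Vert\}$ equals both $\inf(\sigma(B)\setminus\{0\})$ and $\inf\{\langle Bx,x\rangle\}$, the infima being taken over unit vectors of $(\ker B)^{\bot}$. Applying this to $B=A^{\ast}A$ gives $|A|^{2}=\inf\langle A^{\ast}Ax,x\rangle=|A^{\ast}A|$, and symmetrically $|A^{\ast}|^{2}=|AA^{\ast}|$; equality of the nonzero spectra then yields $|A^{\ast}A|=|AA^{\ast}|$, and combining with $|A|=\Vert(A^{\#})^{-1}\Vert^{-1}$ from the first block completes \eqref{e-AB}. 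This identification of $|B|$ with the bottom of the nonzero spectrum---equivalently, upgrading $\langle Bx,x\rangle\to 0$ to $Bx\to 0$ via the square root of $B$---is the step I expect to require the most care.
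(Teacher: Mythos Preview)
Your proposal is correct and closes the equivalence cycle cleanly, but the route differs from the paper's in two places worth noting. First, the paper does \emph{not} prove (i)$\Leftrightarrow$(iv) from scratch: it quotes the classical closed range theorem from Yosida as a black box and then derives (iv)$\Leftrightarrow$(vii) by a direct image computation $\limfunc{im}AA^{\ast}=A((\ker A)^{\bot})=\limfunc{im}A$. You instead avoid Yosida entirely, bridging the two six-element blocks via the elementary links (iii)$\Leftrightarrow$(xii), (x)$\Rightarrow$(iv) and (vii)$\Rightarrow$(i); this is more self-contained for the equivalences. Second, for the numerical identity \eqref{e-AB} the paper stays purely operator-theoretic: it introduces $A^{\flat}:=A^{\ast}|_{(\ker A^{\ast})^{\bot}}$, checks that $A^{\flat}=(A^{\#})^{\ast}$ and hence $(A^{\flat})^{-1}=((A^{\#})^{-1})^{\ast}$, so $\Vert(A^{\flat})^{-1}\Vert=\Vert(A^{\#})^{-1}\Vert$, giving $|A|=|A^{\ast}|$ at once; then $(A^{\ast}A)^{\#}=A^{\flat}A^{\#}$ and $\Vert(A^{\#})^{-1}((A^{\#})^{-1})^{\ast}\Vert=\Vert(A^{\#})^{-1}\Vert^{2}$ give $|A^{\ast}A|=|A|^{2}$. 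Your route through the common nonzero spectrum of $A^{\ast}A$ and $AA^{\ast}$ and the identification $|B|=\inf(\sigma(B)\setminus\{0\})=\inf\langle Bx,x\rangle$ for positive self-adjoint $B$ is equally valid but imports spectral/functional calculus (the square root of $B$) where the paper gets by with adjoints and norms alone. In short: you trade an external citation (Yosida) for an external tool (spectral theory), while the paper does the reverse.
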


\begin{proof}
\textit{Step 1.} We begin the proof by showing that conditions (i)-(xii) are
indeed equivalent.

The equivalence (i)$\Leftrightarrow $(ii) follows from \eqref{e-kerA_imA*}
and the fact that the orthogonal complement of a nonempty set in a Hilbert
space is a closed subspace; see, for example, \cite[Theorem 4.5]{Deu01}. For
the equivalence (i)$\Leftrightarrow $(iii), see \cite[Theorem 8.18]{Deu01}.
Hence, we have obtained the equivalences (i)$\Leftrightarrow $(ii)$%
\Leftrightarrow $(iii). By replacing $A$ with $A^{\ast }$, $AA^{\ast }$ and $%
A^{\ast }A$ in the latter equivalence, we obtain (iv)$\Leftrightarrow $(v)$%
\Leftrightarrow $(vi), (vii)$\Leftrightarrow $(viii)$\Leftrightarrow $(ix)
and (x)$\Leftrightarrow $(xi)$\Leftrightarrow $(xii), respectively. For the
equivalence of statements (i) and (iv), see \cite[Chapter VII, Section 5]%
{Yos78}. Thus statements (i)-(vi) are equivalent. Next we show that (iv)$%
\Leftrightarrow $(vii). Indeed, assume that $\limfunc{im}A^{\ast }$ is
closed. Then,  the equality  $\limfunc{im}A^{\ast }=(\ker A)^{\bot }$ and the orthogonal
decomposition theorem yield
\begin{equation}
\limfunc{im}(AA^{\ast })=A(\limfunc{im}A^{\ast })=A((\ker A)^{\bot
})=A((\ker A)^{\bot }\oplus \ker A)=A(\mathcal{H}_{1})=\limfunc{im}A,
\end{equation}%
which is closed by the equivalence between (i) and (iv). This proves that
(iv)$\Rightarrow $(vii). Now assume that $\limfunc{im}AA^{\ast }$ is closed.
We prove that $\limfunc{im}A$ and, consequently, $\limfunc{im}A^{\ast }$ are
closed. Indeed, by \eqref{e-kerAA*} and \eqref{e-kerA_imA*}, we have
\begin{equation}
\limfunc{im}A\subseteq \limfunc{cl}(\limfunc{im}A)=(\ker A^{\ast })^{\perp
}=(\ker AA^{\ast })^{\perp }=\limfunc{im}AA^{\ast }\subseteq \limfunc{im}A.
\end{equation}%
Thus, $\limfunc{cl}(\limfunc{im}A)=\limfunc{im}A$. By interchanging $A$ with $A^{\ast }$ in the above argument, we can show that $\limfunc{im}A^{\ast }A=\limfunc{im}A^{\ast }$ and that (i)$ \Leftrightarrow $(x). Consequently, we have established the equivalence of
all the conditions (i)-(xii) and proved (\ref{e-imAA*}). In the  remaining  part of the proof we suppose that any of  the equivalent  conditions (i)-(xii) is satisfied.

\textit{Step 2.} Suppose that $\limfunc{im}A$ is closed. Then $\limfunc{im}A=(\ker A^{\ast })^{\bot }$. This means that $A^{\#}$ has
closed image, that is, $\limfunc{im}A^{\#}=\limfunc{im}A=(\ker A^{\ast
})^{\bot }$. This and the fact that $A^{\#}$ is a bijection yield that $%
A^{\#}$ has a bounded inverse $(A^{\#})^{-1}:(\ker A^{\ast })^{\bot
}\rightarrow (\ker A)^{\bot }$ (see \cite[Theorem 8.19]{Deu01}).

\textit{Step 3.} By assumption, $\limfunc{im}A^{\ast }$ is closed, that is, $%
\limfunc{im}A^{\ast }=(\ker A)^{\bot }$. Similarly to the definition of $%
A^{\#}$, we can define the operator $A^{\flat }:(\ker A^{\ast })^{\bot
}\rightarrow (\ker A)^{\bot }$ by $A^{\flat }:=A^{\ast }\mid _{(\ker A^{\ast
})^{\bot }}$. Moreover, similarly to Proposition \ref{l-bij} and Step 2 we
can prove that $A^{\flat }$ is a bijection and has a bounded inverse $%
(A^{\flat })^{-1}:(\ker A)^{\bot }\rightarrow (\ker A^{\ast })^{\bot }$.
Finally, for any $x\in (\ker A)^{\bot }$ and $y\in (\ker A^{\ast })^{\bot }$%
, we have
\begin{equation}
\langle A^{\#}x,y\rangle =\langle Ax,y\rangle =\langle x,A^{\ast }y\rangle
=\langle x,A^{\flat }y\rangle \text{,}
\end{equation}%
which means that
\begin{equation}
A^{\flat }=(A^{\#})^{\ast }\text{\qquad and\qquad }A^{\#}=(A^{\flat })^{\ast
}\text{.}  \label{e-Ab}
\end{equation}%
By \cite[Theorem 8.31]{Deu01},  we have
\begin{equation}
(A^{\flat })^{-1}=((A^{\#})^{\ast })^{-1}=((A^{\#})^{-1})^{\ast }\text{.}
\label{e-D-1}
\end{equation}

\textit{Step 4.} Now we show that \eqref{e-AB} indeed holds. For arbitrary
 norm-one  $y\in \limfunc{im}A$ and $x\in (\ker A)^{\bot }$ with $A^{\#}x=y$,
the first inequality in (\ref{e-Ainf2}) implies that
\begin{equation}
\Vert (A^{\#})^{-1}y\Vert =\Vert (A^{\#})^{-1}A^{\#}x\Vert =\Vert x\Vert
\leq \left\vert A\right\vert ^{-1}\Vert Ax\Vert =\left\vert A\right\vert
^{-1}\Vert y\Vert =\left\vert A\right\vert ^{-1}\text{.}  \label{e-A-1}
\end{equation}%
Thus
\begin{equation}
\Vert (A^{\#})^{-1}\Vert
=  \sup_{\substack{ y\in \limfunc{im}A \\ \Vert y\Vert =1}}  \Vert (A^{\#})^{-1}y\Vert
\leq \left\vert A\right\vert ^{-1}
\end{equation}%
and $\left\vert A\right\vert \leq \Vert (A^{\#})^{-1}\Vert ^{-1}$. We show
that $\left\vert A\right\vert \geq \Vert (A^{\#})^{-1}\Vert ^{-1}$. Suppose,
to the contrary, that $\Vert (A^{\#})^{-1}\Vert ^{-1}>\left\vert
A\right\vert $. By the definition of $\left\vert A\right\vert $, for any $k$
there is $x_{k}\in (\ker A)^{\bot }$ with $\Vert x_{k}\Vert =1$ such that%
\begin{equation}
\Vert A^{\#}x_{k}\Vert =\Vert Ax_{k}\Vert \leq \left\vert A\right\vert +%
\frac{1}{k}\text{.}  \label{e-Bxk}
\end{equation}%
We have
\begin{eqnarray}
\left\vert A\right\vert &<&\Vert (A^{\#})^{-1}\Vert ^{-1}=\Vert
(A^{\#})^{-1}\Vert ^{-1}\Vert x_{k}\Vert =\Vert (A^{\#})^{-1}\Vert
^{-1}\Vert (A^{\#})^{-1}A^{\#}x_{k}\Vert  \notag \\
&\leq &\Vert (A^{\#})^{-1}\Vert ^{-1}\Vert (A^{\#})^{-1}\Vert \cdot \Vert
A^{\#}x_{k}\Vert \leq \left\vert A\right\vert +\frac{1}{k},
\end{eqnarray}%
which, by letting $k\rightarrow \infty $, leads to a contradiction. Thus
\begin{equation}
\left\vert A\right\vert =\frac{1}{\Vert (A^{\#})^{-1}\Vert }.  \label{e-/A/}
\end{equation}%
Replacing $A$ with $A^{\ast }$,  we obtain,
\begin{equation}
\left\vert A^{\ast }\right\vert
=\frac{1}{\Vert (A^{\flat })^{-1}\Vert }.  \label{e-/A*/}
\end{equation}%
This, when combined with \eqref{e-D-1},  implies that
\begin{equation}
\left\vert A\right\vert
=\frac{1}{\Vert (A^{\#})^{-1}\Vert }
=\frac{1}{\Vert ((A^{\#})^{-1})^{\ast }\Vert }
 =\frac{1}{\Vert (A^{\flat })^{-1}\Vert }
=\left\vert A^{\ast }\right\vert ,  \label{e-a}
\end{equation}%
which proves a part of \eqref{e-AB}.

Similarly as in Proposition \ref{l-bij} and in Step 2, and since $(\ker
A)^{\perp }=(\ker A^{\ast }A)^{\perp }$, the operator $(A^{\ast
}A)^{\#}:(\ker A)^{\bot }\rightarrow (\ker A)^{\bot }$, defined by $(A^{\ast
}A)^{\#}:=A^{\ast }A\mid _{(\ker A)^{\bot }}$, is a bijection and has a
bounded inverse $((A^{\ast }A)^{\#})^{-1}$. By \cite[Theorem 8.25]{Deu01},  we have
\begin{equation}
\Vert (A^{\#})^{-1}((A^{\#})^{-1})^{\ast }\Vert =\Vert (A^{\#})^{-1}\Vert
^{2}\text{.}  \label{e-A+}
\end{equation}%
Moreover, for $x\in (\ker A)^{\bot }$, we obtain
\begin{equation}
(A^{\ast }A)^{\#}x=A^{\ast }Ax=A^{\ast }A^{\#}x=A^{\flat }A^{\#}x
\end{equation}%
which, when combined with (\ref{e-D-1}) and (\ref{e-A+}), implies  that
\begin{equation}
\Vert ((A^{\ast }A)^{\#})^{-1}\Vert =\Vert (A^{\flat }A^{\#})^{-1}\Vert
=\Vert (A^{\#})^{-1}(A^{\flat })^{-1}\Vert =\Vert
(A^{\#})^{-1}((A^{\#})^{-1})^{\ast }\Vert =\Vert (A^{\#})^{-1}\Vert ^{2}%
\text{.}
\end{equation}%
Thus, replacing $A$ with $A^{\ast }A$ in \eqref{e-/A/}, we arrive at
\begin{equation}
\left\vert A^{\ast }A\right\vert =\frac{1}{\Vert ((A^{\ast
}A)^{\#})^{-1}\Vert }=\frac{1}{\Vert (A^{\#})^{-1}\Vert ^{2}}=\left\vert
A\right\vert ^{2}\text{.}
\end{equation}%
In the same way, one can prove that
\begin{equation}
\left\vert AA^{\ast }\right\vert =\frac{1}{\Vert (A^{\flat })^{-1}\Vert ^{2}}%
=\left\vert A^{\ast }\right\vert ^{2}=\left\vert A\right\vert ^{2}.
\end{equation}%
This completes the proof of Lemma \ref{l-clR}.
\end{proof}

\begin{remark}
\rm\ %
\textrm{\ Observe that even if $\limfunc{im}A$ is not closed then, due to
the equivalences between (iii), (vi), (ix) and (xii), the equalities from %
\eqref{e-AB} will take the following form: $\left\vert A\right\vert
=\left\vert A^{\ast }\right\vert =\sqrt{\left\vert AA^{\ast }\right\vert }=%
\sqrt{\left\vert A^{\ast }A\right\vert }=0$. }
\end{remark}

\begin{lemma}[Compact Operators and Closed Range]
\label{l-compOp} Let $A:\mathcal{H}_{1}\rightarrow \mathcal{H}_{2}$ be a
nonzero bounded linear operator and assume that $A$ is compact. Let $\Lambda
^{+}(A^{\ast }A)$ be the set of all positive eigenvalues of $A^{\ast }A$. Then
\begin{equation}
\Vert A^{\ast }A\Vert =\sup \Lambda ^{+}(A^{\ast }A)\in \Lambda ^{+}(A^{\ast
}A).
\end{equation}%
Moreover, we have the following alternative:

\begin{enumerate}
\item[(i)] The set $\Lambda^+(A^*A)$ is finite (for example, when $A$ is a
nonzero $m\times n$ matrix). Then
\begin{equation}
|A^*A|=\min \Lambda^+(A^*A)>0
\end{equation}
and consequently, the set $\limfunc{im}A$ is closed.

\item[(ii)] The set $\Lambda^+(A^*A)$ is countably infinite. Then
\begin{equation}
|A^*A|=\inf \Lambda^+(A^*A)=0
\end{equation}
and consequently, the set $\limfunc{im}A$ is not closed.
\end{enumerate}
\end{lemma}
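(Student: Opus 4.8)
The plan is to reduce everything to the spectral theory of the compact, self-adjoint and positive operator $B:=A^{\ast}A$. First I would note that $B$ is compact (a composition of the bounded $A^{\ast}$ with the compact $A$), self-adjoint and positive, and that $B\neq 0$ since $A\neq 0$. The spectral theorem for compact self-adjoint operators then supplies an orthonormal system $\{e_{n}\}$ of eigenvectors spanning $\limfunc{cl}(\limfunc{im}B)=(\ker B)^{\bot}=(\ker A)^{\bot}$ (the last equality by \eqref{e-kerAA*}), with associated eigenvalues $\lambda_{n}>0$; moreover each positive eigenvalue has finite multiplicity, and if there are infinitely many of them they can be arranged in a sequence tending to $0$. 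This is exactly what forces the dichotomy in the statement: $\Lambda^{+}(A^{\ast}A)$ is either finite or a countably infinite null sequence.

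For the norm identity I would invoke the standard fact that the norm of a compact self-adjoint operator is attained as the modulus of one of its eigenvalues; since $B\geq 0$ all eigenvalues are nonnegative, so $\Vert A^{\ast}A\Vert=\max\Lambda^{+}(A^{\ast}A)=\sup\Lambda^{+}(A^{\ast}A)\in\Lambda^{+}(A^{\ast}A)$. To evaluate $\vert A^{\ast}A\vert$, recall from \eqref{e-Ainf} (applied to $A^{\ast}A$, and using $(\ker A^{\ast}A)^{\bot}=(\ker A)^{\bot}$) that
\begin{equation*}
\vert A^{\ast}A\vert=\inf\{\Vert A^{\ast}Ax\Vert\mid x\in(\ker A)^{\bot},\ \Vert x\Vert=1\}.
\end{equation*}
Expanding $x=\sum_{n}c_{n}e_{n}$ with $\sum_{n}\vert c_{n}\vert^{2}=1$ gives $A^{\ast}Ax=\sum_{n}\lambda_{n}c_{n}e_{n}$ and hence $\Vert A^{\ast}Ax\Vert^{2}=\sum_{n}\lambda_{n}^{2}\vert c_{n}\vert^{2}\geq(\inf_{n}\lambda_{n})^{2}$, while the choice $x=e_{m}$ gives $\Vert A^{\ast}Ae_{m}\Vert=\lambda_{m}$; therefore $\vert A^{\ast}A\vert=\inf_{n}\lambda_{n}=\inf\Lambda^{+}(A^{\ast}A)$.

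The alternative then follows by combining these two computations with the structure of $\Lambda^{+}(A^{\ast}A)$. In case (i), a finite set of positive numbers attains its infimum, so $\vert A^{\ast}A\vert=\min\Lambda^{+}(A^{\ast}A)>0$, and the equivalence (i)$\Leftrightarrow$(xii) of Lemma \ref{l-clR} yields that $\limfunc{im}A$ is closed. In case (ii), the null-sequence structure gives $\inf\Lambda^{+}(A^{\ast}A)=0$, so $\vert A^{\ast}A\vert=0$ and the same equivalence yields that $\limfunc{im}A$ is not closed. The only genuinely delicate point is the spectral-theoretic input: one must be sure that the eigenvectors with positive eigenvalues really span $(\ker A)^{\bot}$ (so that the expansion of $x$ above is legitimate) and that compactness forces any infinite family of positive eigenvalues to accumulate only at $0$; everything else is a one-line computation or a direct appeal to Lemma \ref{l-clR}.
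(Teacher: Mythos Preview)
Your proposal is correct and follows essentially the same route as the paper: both reduce to the spectral theorem for the compact self-adjoint operator $A^{\ast}A$, compute $|A^{\ast}A|$ via the eigenexpansion on $(\ker A)^{\bot}=(\ker A^{\ast}A)^{\bot}$, and then invoke Lemma~\ref{l-clR} to translate $|A^{\ast}A|>0$ versus $|A^{\ast}A|=0$ into closedness or non-closedness of $\limfunc{im}A$. The only cosmetic difference is that the paper works with eigenspace projections $P_{k}$ rather than an orthonormal eigenbasis and handles the two cases separately, whereas you establish $|A^{\ast}A|=\inf_{n}\lambda_{n}$ once and then split.
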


\begin{proof}
The proof follows from the basic properties of compact self-adjoint
operators and the spectral decomposition theorem applied to $A^*A$; see, for
example \cite[Section 15.3]{Kre14} or \cite[Section 4.8]{DM05}. 
We remark in passing that the spectral decomposition theorem is usually presented in the setting of a complex Hilbert space. Nevertheless, one can obtain an analogous result in a real Hilbert space by using, for instance, a complexification argument combined with the fact that all eigenvalues of a self-adjoint operator are real. Hence, when referring to results from \cite[Section 15.3]{Kre14} we actually adjust them to the setting of a real Hilbert space.

First observe that since $A$ is compact,  the operators  $A^{\ast }$ and consequently $A^{\ast }A$ are also compact; see \cite[Theorem 4.12]{Kre14}. Moreover, since $A^{\ast }A$ is self-adjoint and positive semi-definite, all of its
eigenvalues are nonnegative and, by \cite[Theorems 15.11]{Kre14},
we indeed have $\Vert A^{\ast }A\Vert =\sup \Lambda ^{+}(A^{\ast }A)\in
\Lambda ^{+}(A^{\ast }A)$. By the spectral decomposition theorem \cite[%
Theorem 15.12]{Kre14}, we know that $A^{\ast }A$ has  either a finite or a countably infinite  set of eigenvalues accumulating only at zero,  say $\Lambda^+(A^\ast A)=\{\lambda_k\mid k\in K\}$ with $K=\{1,\ldots,N\}$ in the former and $K=\mathbb N$ in the latter case. In both cases we  may assume that $\lambda_k>\lambda_{k+1}$.  Moreover, the
eigenspaces $E_{k}:=\ker (\lambda _{k}\limfunc{Id}-A^{\ast }A)$
corresponding to different eigenvalues are orthogonal and finite
dimensional. Finally, for every point $x\in \mathcal{H}_{1}$, we have
\begin{equation}
x=  \sum_{k\in K } P_{k}x+P_{\ker A^{\ast }A}x\qquad \text{and}\qquad
A^{\ast }A=  \sum_{k\in K }  \lambda _{k}P_{k},
\label{pr:compOp:decomposition}
\end{equation}%
where $P_{k}$ is the orthogonal projection  of $\mathcal H_1$  onto the eigenspace $E_{k}$.

\textit{Part (i).} Assume that  $\Lambda ^{+}(A^{\ast }A)$ is finite.  Observe that for any $x\in (\ker A^{\ast }A)^{\perp }$, we have $P_{\ker
A^{\ast }A}x=0$. Moreover, since the eigenspaces $E_{k}$ and $E_{l}$ are
orthogonal for $k\neq l$, the corresponding projections satisfy $P_{k}P_{l}=P_{l}P_{k}=0$.  Using  \eqref{pr:compOp:decomposition}, we arrive at
\begin{equation}
\Vert A^{\ast }Ax\Vert ^{2}
=\left\langle \sum_{k=1}^{N}\lambda _{k}P_{k}x,\ \sum_{l=1}^{N}\lambda_{l}P_{l}x \right\rangle
=\sum_{k=1}^{N}\lambda_{k}^{2}\Vert P_{k}x\Vert ^{2}
\geq  \lambda _{N}^{2}  \sum_{k=1}^{N}\Vert P_{k}x\Vert ^{2}
=  \lambda _{N}^{2}  \Vert x\Vert ^{2}
\end{equation}%
 for all $x\in \mathcal{H}_{1}$. Knowing that $|A^{\ast }A|$ is the largest number for which the above inequality holds (compare with \eqref{e-Ainf2}), we obtain $ \lambda _{N}  \leq |A^{\ast }A|$. On the other hand, for any norm-one eigenvector $  e_{N}\in E_{N} \subseteq (\ker A^{\ast }A)^{\perp }$, we have

\begin{equation}
\lambda _{N}=\Vert \lambda_{N}e_{N}\Vert =\Vert A^{\ast }Ae_{N}\Vert \geq
|A^{\ast }A|\cdot \Vert e_{N}\Vert =|A^{\ast }A|,
\end{equation}

which shows that $|A^{\ast }A|= \lambda _{N} $.

\textit{Part (ii).} Assume that  $\Lambda ^{+}(A^{\ast }A)$ is countably  infinite. For each $k=1,2,\ldots ,$ choose a
norm-one eigenvector $e_{k}\in E_{k}\subseteq (\ker A^{\ast }A)^{\perp }$.
 Then we have
\begin{equation}
|A^{\ast }A|=\inf \{\Vert A^{\ast }Ax\Vert \mid x\in (\ker A^{\ast }A)^{\bot
},\Vert x\Vert =1\}\leq \inf_{k}\Vert A^{\ast }Ae_{k}\Vert =\inf_{k}\lambda
_{k}=0,
\end{equation}%
where the last equality holds  because  zero is the only possible accumulation
point of $\Lambda ^{+}(A^{\ast }A)$. This completes the proof.
\end{proof}

\section{Landweber Operators\label{s-LtO}}

Let $\mathcal{H}_{1}$ and $\mathcal{H}_{2}$ be two Hilbert spaces, let $A:%
\mathcal{H}_{1}\rightarrow \mathcal{H}_{2}$ be a nonzero bounded linear
operator  and let  $T:\mathcal{H}_{2}\rightarrow \mathcal{H}_{2}$ be  an arbitrary  operator.

\begin{definition}
The operator $\mathcal{L}\{T\}:\mathcal{H}_{1}\rightarrow \mathcal{H}_{1}$
defined by
\begin{equation}
\mathcal{L}\{T\}x:=x+\frac{1}{\Vert A\Vert ^{2}}A^{\ast }\big(T(Ax)-Ax\big),
 \quad x\in \mathcal H_1,
\label{e-Land}
\end{equation}%
is called the \textit{Landweber operator} ( corresponding  to $T$). We call the operation $T\mapsto \mathcal{L}\{T\}$ the \textit{Landweber transform}.
\end{definition}

We recall that in the literature the Landweber operator is  usually  defined for $T=P_{Q}$, where $Q\subseteq \mathcal{H}_{2}$ is closed and convex (see,  for example,  \cite{Byr02}).

\begin{remark}
\rm\ %
Observe that the Landweber transform of the identity on $\mathcal{H}_{2}$ is
again the identity, but on $\mathcal{H}_{1}$. This can be written briefly as  follows:
\begin{equation}
\mathcal{L}\{\limfunc{Id}\}=\limfunc{Id}\text{.}
\end{equation}%
Moreover, for given operators $T:\mathcal{H}_{2}\rightarrow \mathcal{H}_{2}$
and $T_{i}:\mathcal{H}_{2}\rightarrow \mathcal{H}_{2}$,  weights $\omega _{i}\geq 0,i=1,2,...,m$, with $\sum_{i=1}^{m}\omega _{i}=1$ and a relaxation parameter
$\lambda \geq 0$,  the Landweber transform satisfies
\begin{equation}
\mathcal{L}\{T_{\lambda }\}=\mathcal{L}_{\lambda }\{T\}\text{,}
\label{e-LTrel}
\end{equation}%
where $T_{\lambda }:=\limfunc{Id}+\lambda (T-\limfunc{Id})$ and $\mathcal{L}%
_{\lambda }\{T\}:=\limfunc{Id}+\lambda (\mathcal{L}\{T\}-\limfunc{Id})$
denote the $\lambda $-relaxation of $T$ and $\mathcal{L}\{T\}$,
respectively, and
\begin{equation}
\mathcal{L}\left\{ \sum_{i=1}^{m}\omega _{i}T_{i}\right\}
=\sum_{i=1}^{m}\omega _{i}\mathcal{L}\{T_{i}\}.  \label{e-LTconv}
\end{equation}%
Furthermore, if $A$ is unitary (that is, when $A^{\ast }A=\limfunc{Id}$),
then $\mathcal{L}\{T\}=A^{\ast }TA$ and thus
\begin{equation}
\mathcal{L}\left\{ \prod_{i=1}^{m}T_{i}\right\} =\prod_{i=1}^{m}\mathcal{L}%
\{T_{i}\}.  \label{e-LTprod}
\end{equation}
\end{remark}

\bigskip
In order to formulate our next lemma,  we recall \cite{BBR78}  that an operator $T:\mathcal{H}\rightarrow \mathcal{H}$ is called $\alpha $-\textit{%
averaged} ($\alpha $-AV), where $\alpha \in (0,1)$, if $T$ is  the  $\alpha $-relaxation of some nonexpansive operator $U$, that is, $T=(1-\alpha )\limfunc{Id}+\alpha U$.

\begin{lemma}
\label{l-FixLT} Let $\mathcal{L}\{T\}$ be the Landweber operator  corresponding  to $%
T:\mathcal{H}_{2}\rightarrow \mathcal{H}_{2}$, $\alpha \in (0,1)$ and $\rho
\geq 0$.

\begin{enumerate}
\item[$\mathrm{(i)}$] If $T$ (firmly) nonexpansive, then $\mathcal{L}\{T\}$
is also (firmly) nonexpansive.

\item[$\mathrm{(ii)}$] If $T$ is $\alpha $-AV, then $\mathcal{L}\{T\}$ is
also $\alpha $-AV.

\item[$\mathrm{(iii)}$] If $T$ is a cutter and $\limfunc{im}A\cap \limfunc{%
Fix}T\neq \emptyset $, then $\mathcal{L}\{T\}$ is also a cutter and $%
\limfunc{Fix}\mathcal{L}\{T\}=A^{-1}(\limfunc{Fix}T)$.

\item[$\mathrm{(iv)}$] If $T$ is $\rho $-SQNE and $\limfunc{im}A\cap
\limfunc{Fix}T\neq \emptyset $, then $\mathcal{L}\{T\}$ is also $\rho $-SQNE
and, as in (iii), we have $\limfunc{Fix}\mathcal{L}\{T\}=A^{-1}(%
\limfunc{Fix}T)$.
\end{enumerate}
\end{lemma}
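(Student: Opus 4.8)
The plan is to reduce the whole lemma to a single scalar estimate that transfers the displacement of $\mathcal{L}\{T\}$ in $\mathcal{H}_1$ to that of $T$ in $\mathcal{H}_2$, and then to bootstrap the remaining parts from part (i) through the relaxation identity \eqref{e-LTrel} and Lemma \ref{th:SQNEequiv}. Write $L:=\mathcal{L}\{T\}$ and $\beta:=\Vert A\Vert^2$, and for $x,y\in\mathcal{H}_1$ set $u:=Ax$, $v:=Ay$. Expanding $Lx-Ly=(x-y)+\tfrac1\beta A^\ast\big((Tu-Tv)-(u-v)\big)$ and using $\langle x-y,A^\ast w\rangle=\langle A(x-y),w\rangle$ together with $\Vert A^\ast w\Vert\le\Vert A\Vert\,\Vert w\Vert$, I would show, after the cross and quadratic terms collapse, that
\[
\Vert Lx-Ly\Vert^2\le\Vert x-y\Vert^2+\tfrac1\beta\big(\Vert Tu-Tv\Vert^2-\Vert u-v\Vert^2\big).
\]
If $T$ is nonexpansive the bracket is nonpositive and (i) follows at once. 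For the firmly nonexpansive case I would additionally compute the exact value $\langle Lx-Ly,x-y\rangle=\Vert x-y\Vert^2+\tfrac1\beta\big(\langle Tu-Tv,u-v\rangle-\Vert u-v\Vert^2\big)$; comparing it with the displayed upper bound, the desired inequality $\Vert Lx-Ly\Vert^2\le\langle Lx-Ly,x-y\rangle$ reduces precisely to $\Vert Tu-Tv\Vert^2\le\langle Tu-Tv,u-v\rangle$, which is the firm nonexpansiveness of $T$.

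For part (ii) I would avoid a fresh computation and instead invoke \eqref{e-LTrel}. If $T$ is $\alpha$-AV, write $T=U_\alpha$ for a nonexpansive $U$; then $\mathcal{L}\{T\}=\mathcal{L}\{U_\alpha\}=\mathcal{L}_\alpha\{U\}$ is the $\alpha$-relaxation of $\mathcal{L}\{U\}$, which is nonexpansive by part (i), so $\mathcal{L}\{T\}$ is again $\alpha$-AV.

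The heart of the argument is part (iii). I would first identify $\limfunc{Fix}L$. The inclusion $A^{-1}(\limfunc{Fix}T)\subseteq\limfunc{Fix}L$ is immediate since $Ax\in\limfunc{Fix}T$ forces $T(Ax)-Ax=0$. For the reverse inclusion, suppose $Lx=x$, i.e. $T(Ax)-Ax\in\ker A^\ast=(\limfunc{im}A)^\perp$. Picking $q\in\limfunc{im}A\cap\limfunc{Fix}T$ and writing $y:=Ax$, the cutter inequality for $T$ gives $\langle q-Ty,y-Ty\rangle\le0$; splitting $q-Ty=(q-y)+(y-Ty)$ and using that $q-y\in\limfunc{im}A$ is orthogonal to $y-Ty\in(\limfunc{im}A)^\perp$, this collapses to $\Vert y-Ty\Vert^2\le0$, hence $Ax\in\limfunc{Fix}T$. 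Having $\limfunc{Fix}L=A^{-1}(\limfunc{Fix}T)$, I would verify the cutter inequality for $L$: for $z\in\limfunc{Fix}L$ (so $Az\in\limfunc{Fix}T$) and arbitrary $x$, decompose $\langle z-Lx,x-Lx\rangle=\langle z-x,x-Lx\rangle+\Vert x-Lx\Vert^2$, bound $\Vert x-Lx\Vert^2\le\tfrac1\beta\Vert T(Ax)-Ax\Vert^2$ via $\Vert A^\ast w\Vert\le\Vert A\Vert\,\Vert w\Vert$, and rewrite the first term as $-\tfrac1\beta\langle Az-Ax,T(Ax)-Ax\rangle$; the cutter inequality for $T$ applied to the point $Ax$ with fixed point $Az$ gives $\langle Az-Ax,T(Ax)-Ax\rangle\ge\Vert T(Ax)-Ax\Vert^2$, and the two estimates cancel to yield $\langle z-Lx,x-Lx\rangle\le0$.

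Finally, part (iv) follows by combining (iii) with Lemma \ref{th:SQNEequiv} and \eqref{e-LTrel}: since $T$ is $\rho$-SQNE, the relaxation $T_{(\rho+1)/2}$ is a cutter with the same fixed point set, so $\limfunc{im}A\cap\limfunc{Fix}T_{(\rho+1)/2}\neq\emptyset$; applying (iii) to $T_{(\rho+1)/2}$ and using $L_{(\rho+1)/2}=\mathcal{L}\{T_{(\rho+1)/2}\}$ shows $L_{(\rho+1)/2}$ is a cutter, whence $L$ is $\rho$-SQNE by Lemma \ref{th:SQNEequiv}, with $\limfunc{Fix}L=\limfunc{Fix}L_{(\rho+1)/2}=A^{-1}(\limfunc{Fix}T)$. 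I expect the main obstacle to be the fixed-point identification in (iii): the nontrivial inclusion genuinely needs both the cutter property and the standing assumption $\limfunc{im}A\cap\limfunc{Fix}T\neq\emptyset$, and the orthogonality $\ker A^\ast=(\limfunc{im}A)^\perp$ must be used carefully to make the cross terms vanish.
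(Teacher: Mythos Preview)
Your proposal is correct. The overall logical structure---proving (i) and (iii) directly and then bootstrapping (ii) and (iv) via the relaxation identity \eqref{e-LTrel} together with Lemma~\ref{th:SQNEequiv}---matches the paper's strategy exactly. The difference is that the paper outsources the two base cases: for (i) it cites \cite{Ceg16} for the firmly nonexpansive case and then reduces the nonexpansive case to it via $T=U_2$ with $U$ firmly nonexpansive, while for (iii) it simply refers to \cite[Lemma~3.1]{WX11}. You instead supply self-contained direct arguments for both. Your scalar estimate in (i) and the orthogonality argument $\ker A^\ast=(\limfunc{im}A)^\perp$ in (iii) are precisely the computations hidden inside those references, so the content is the same; your version is just more explicit. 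For (ii) the paper passes through a firmly nonexpansive $U$ (writing $T=\limfunc{Id}+2\alpha(U-\limfunc{Id})$) whereas you pass through a nonexpansive $U$; both routes are immediate once (i) is available.
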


\begin{proof}
(i) If $T$ is firmly nonexpansive, then the result follows  from  \cite[Proposition 4]{Ceg16}. Now suppose that $T$ is nonexpansive,  that is,  $T=\limfunc{Id}+2(U-\limfunc{Id})$ for some firmly nonexpansive operator $U$; see, for example, \cite[Theorem 2.2.10]{Ceg12}. Hence, by \eqref{e-LTrel},
$\mathcal{L}\{T\}=\mathcal{L}\{U_{2}\}=\mathcal{L}_{2}\{U\}$, which is
nonexpansive. (ii) By \cite[Corollary 2.2.17]{Ceg12}, $T$ is $\alpha $-AV if
and only if $T=\limfunc{Id}+2\alpha (U-\limfunc{Id})$ for some firmly
nonexpansive operator $U$. Hence the  result  follows  from  (i)
and \eqref{e-LTrel}. A proof of (iii) can be found in \cite[Lemma 3.1]{WX11}. (iv) It follows  from  Lemma \ref{th:SQNEequiv} that $T$ is $\rho $-SQNE if and only if $T=\limfunc{Id}+\frac{2}{\rho +1}(U-\limfunc{Id})$ for some cutter $U$, which in this case equals $T_{(\rho +1)/2}$. Hence we can refer,  once again,  to \eqref{e-LTrel}
which, when combined with (iii), completes the proof. An independent proof
of parts (iii) and (iv) can also be found in \cite[Lemma 4.1]{Ceg15}.
\end{proof}

\bigskip

Observe that the equality $\limfunc{Fix}\mathcal{L}\{T\}=A^{-1}(\limfunc{Fix}%
T)$ yields the following equivalence:
\begin{equation}
A^{\ast }(T(Ax)-Ax)=0\Longleftrightarrow Ax\in \limfunc{Fix}T\text{.}
\label{e-FixT}
\end{equation}%
We use this fact later in Section \ref{s-ELtO} while defining the
extrapolated Landweber operator. Before formulating the main result of this
section, we prove  several  auxiliary  lemmata.

\begin{lemma}
\label{l-A1}Let $A:\mathcal{H}_{1}\rightarrow \mathcal{H}_{2}$ be a nonzero
bounded linear operator with closed $\limfunc{im}A$ and let $T:\mathcal{H}%
_{2}\rightarrow \mathcal{H}_{2}$ be quasi-nonexpansive. Assume that $%
\limfunc{im}A\cap \limfunc{Fix}T\neq \emptyset $. Then for any $x\in
\mathcal{H}_{1}$, we have
\begin{equation}
\frac{1}{\Vert A\Vert }d\big(Ax,\limfunc{im}A\cap \limfunc{Fix}T\big)\leq d%
\big(x,\limfunc{Fix}\mathcal{L}\{T\}\big)\leq \frac{1}{|A|}d\big(Ax,\limfunc{%
im}A\cap \limfunc{Fix}T\big)\text{.}  \label{e-dxFixV}
\end{equation}
\end{lemma}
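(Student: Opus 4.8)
The plan is to first pin down the fixed-point set of $\mathcal{L}\{T\}$ and then read off the two estimates from the two extremal constants governing $A$ on $(\ker A)^{\bot }$. Write $F:=\limfunc{Fix}\mathcal{L}\{T\}$ and $G:=\limfunc{im}A\cap \limfunc{Fix}T$. Since a quasi-nonexpansive operator is exactly a $0$-SQNE operator (Definition \ref{def:QNE}), Lemma \ref{l-FixLT}(iv) applies with $\rho =0$ and gives $F=A^{-1}(\limfunc{Fix}T)$; as $Ax\in \limfunc{im}A$ for every $x$, this is the same as $F=A^{-1}(G)$, so that $u\in F$ precisely when $Au\in G$. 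The mechanism behind this identity is that $x\in F$ forces $T(Ax)-Ax\in \ker A^{\ast }=(\limfunc{im}A)^{\bot }$ (by \eqref{e-kerA_imA*}, since $\limfunc{im}A$ is closed), whence the Pythagorean identity against any $z\in G$ together with quasi-nonexpansiveness yields $T(Ax)=Ax$. I also record that $\limfunc{Fix}T$, and hence $G$, is closed and convex (a standard property of quasi-nonexpansive operators), so the metric projection $P_{G}$ is available; alternatively one may work with near-minimizers of $d(\cdot ,G)$ to sidestep this point entirely.

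For the lower estimate I would argue straight from the definition of the operator norm. Note first that $F$ is nonempty, since any $z\in G\subseteq \limfunc{im}A$ has a preimage lying in $F$. For every $u\in F$ we have $Au\in G$, so
\begin{equation*}
d(Ax,G)\leq \Vert Ax-Au\Vert =\Vert A(x-u)\Vert \leq \Vert A\Vert \cdot \Vert x-u\Vert .
\end{equation*}
Taking the infimum over $u\in F$ gives $d(Ax,G)\leq \Vert A\Vert \, d(x,F)$, which is the left-hand inequality.

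For the upper estimate I would use the bijection $A^{\#}:(\ker A)^{\bot }\rightarrow \limfunc{im}A$ of Proposition \ref{l-bij}. Because $\limfunc{im}A$ is closed, Lemma \ref{l-clR} guarantees that $A^{\#}$ has a bounded inverse with $\Vert (A^{\#})^{-1}\Vert =1/|A|$. Let $\bar{y}:=P_{G}(Ax)$, so that $\Vert Ax-\bar{y}\Vert =d(Ax,G)$ and $\bar{y}-Ax\in \limfunc{im}A$, and set
\begin{equation*}
\bar{x}:=x+(A^{\#})^{-1}(\bar{y}-Ax).
\end{equation*}
A short computation, using $A(A^{\#})^{-1}v=v$ for $v\in \limfunc{im}A$, shows $A\bar{x}=\bar{y}\in G\subseteq \limfunc{Fix}T$, hence $\bar{x}\in A^{-1}(\limfunc{Fix}T)=F$. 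Therefore
\begin{equation*}
d(x,F)\leq \Vert x-\bar{x}\Vert =\Vert (A^{\#})^{-1}(\bar{y}-Ax)\Vert \leq \frac{1}{|A|}\Vert \bar{y}-Ax\Vert =\frac{1}{|A|}d(Ax,G),
\end{equation*}
which is the right-hand inequality.

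The main obstacle is the fixed-point identity $F=A^{-1}(\limfunc{Fix}T)$, and in particular its nontrivial inclusion $F\subseteq A^{-1}(\limfunc{Fix}T)$: this is the only place where quasi-nonexpansiveness is genuinely used, through the orthogonality $T(Ax)-Ax\perp \limfunc{im}A$. Once this identity is in hand, both bounds follow immediately from the two constants $\Vert A\Vert $ and $|A|$ that control $A$ on $(\ker A)^{\bot }$, as recorded in \eqref{e-Ainf2} and Lemma \ref{l-clR}.
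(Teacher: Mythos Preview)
Your proof is correct and follows essentially the same strategy as the paper's: both arguments identify $\limfunc{Fix}\mathcal{L}\{T\}=A^{-1}(\limfunc{Fix}T)$ via Lemma~\ref{l-FixLT}, obtain the left-hand inequality by the trivial bound $\Vert A(x-u)\Vert\leq\Vert A\Vert\,\Vert x-u\Vert$ for $u\in\limfunc{Fix}\mathcal{L}\{T\}$, and obtain the right-hand inequality by producing a preimage of $P_{G}(Ax)$ whose displacement from $x$ lies in $(\ker A)^{\bot}$. The only difference is in how that preimage is built: the paper picks an arbitrary preimage $w$ and then corrects it by the $\ker A$-component of $x-w$ via orthogonal decomposition, whereas you construct it directly as $\bar{x}=x+(A^{\#})^{-1}(\bar{y}-Ax)$ using the bounded inverse from Lemma~\ref{l-clR}. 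These two constructions yield the very same point, so the approaches coincide; your use of $(A^{\#})^{-1}$ is slightly more streamlined but requires invoking the full closed range machinery, while the paper's decomposition is more elementary and uses only \eqref{e-Ainf2}.
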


\begin{proof}
To shorten our notation, denote $F:=\limfunc{im}A\cap \limfunc{Fix}T$. Let $%
x\in \mathcal{H}_{1}$. Clearly, $F$ is closed and convex and $d(Ax,F)=\Vert
Ax-P_{F}(Ax)\Vert $. Let $z:=P_{F}(Ax)$. By the equality $\limfunc{Fix}%
\mathcal{L}\{T\}=A^{-1}(\limfunc{Fix}T)$ (see Lemma \ref{l-FixLT}), there is
a point $w\in \limfunc{Fix}\mathcal{L}\{T\}$ such that $Aw=z$. Let
\begin{equation}
u:=x-w=u^{\prime }+u^{\prime \prime }\text{,}  \label{e-un}
\end{equation}%
where,  by the orthogonal decomposition theorem,  $u^{\prime }\in \ker A$ and $u^{\prime \prime }\in  (\ker A)^\perp $. Note that
\begin{equation}
w^{\prime }:=w+u^{\prime }\in \limfunc{Fix}\mathcal{L}\{T\}  \label{e-zn'}
\end{equation}%
because $Aw^{\prime }=Aw+Au^{\prime }=Aw=z\in \limfunc{Fix}T$. Moreover,
\begin{equation}
x-w^{\prime }=u^{\prime \prime }\in (\ker A)^{\bot }.
\end{equation}%
Thus (\ref{e-Ainf2}) yields%
\begin{equation}
\Vert Ax-Aw\Vert =\Vert Ax-Aw^{\prime }\Vert =\Vert A(x-w^{\prime })\Vert
=\Vert Au^{\prime \prime }\Vert \geq \left\vert A\right\vert \cdot \Vert
u^{\prime \prime }\Vert =\left\vert A\right\vert \cdot \Vert x-w^{\prime
}\Vert \text{.}  \label{eAx-Azn}
\end{equation}%
This, when combined with (\ref{e-zn'}) and the definition of the metric
projection, yields

\begin{equation}
d(Ax,F)=\Vert Ax-z\Vert =\Vert Ax-Aw\Vert \geq \left\vert A\right\vert \cdot
\Vert x-w^{\prime }\Vert \geq \left\vert A\right\vert d(x,\limfunc{Fix}%
\mathcal{L}\{T\}),
\end{equation}
which together with the equivalence (i)$\Leftrightarrow $(iii) in Lemma \ref%
{l-clR} proves the second inequality in \eqref{e-dxFixV}.

On the other hand, let $z=P_{\limfunc{Fix}\mathcal{L}\{T\}}x$. Since $Az\in
F $, we have
\begin{equation}
d(x,\limfunc{Fix}{\mathcal{L}\{T\}})=\Vert x-z\Vert \geq \frac{1}{\Vert
A\Vert }\Vert Ax-Az\Vert \geq \frac{1}{\Vert A\Vert }d(Ax,F)
\end{equation}%
and the proof is complete.
\end{proof}

\begin{corollary}
\label{l-A1forQ} Let $A:\mathcal{H}_{1}\rightarrow \mathcal{H}_{2}$ be a
nonzero bounded linear operator with closed $\limfunc{im}A$ and let $Q\subseteq \mathcal{H}_{2}$ be closed and convex. Assume that $%
\limfunc{im}A\cap Q\neq \emptyset $. Then for any $x\in \mathcal{H}_{1}$, we
have
\begin{equation}
\frac{1}{\Vert A\Vert }d\big(Ax,\limfunc{im}A\cap Q\big)=d\big(x,A^{-1}(Q)%
\big)\leq \frac{1}{|A|}d\big(Ax,\limfunc{im}A\cap Q\big)\text{.}
\label{e-dxFixV2}
\end{equation}
\end{corollary}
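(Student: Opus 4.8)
The plan is to read off \eqref{e-dxFixV2} from Lemma \ref{l-A1} by specializing that result to the metric projection $T:=P_{Q}$. The metric projection onto a nonempty, closed, convex set is firmly nonexpansive, hence in particular quasi-nonexpansive (and a cutter), and it satisfies $\limfunc{Fix}P_{Q}=Q$. Consequently the standing hypothesis $\limfunc{im}A\cap Q\neq\emptyset$ is exactly the condition $\limfunc{im}A\cap\limfunc{Fix}P_{Q}\neq\emptyset$ demanded by Lemma \ref{l-A1}, while the assumption that $\limfunc{im}A$ be closed (equivalently $|A|>0$, by Lemma \ref{l-clR}) is carried over unchanged; in particular $|A|>0$, so the right-hand side of \eqref{e-dxFixV2} is finite.

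Next I would identify the fixed point set of the associated Landweber operator. Since $P_{Q}$ is a cutter and $\limfunc{im}A\cap\limfunc{Fix}P_{Q}\neq\emptyset$, Lemma \ref{l-FixLT}(iii) gives
\begin{equation}
\limfunc{Fix}\mathcal{L}\{P_{Q}\}=A^{-1}(\limfunc{Fix}P_{Q})=A^{-1}(Q).
\end{equation}
Substituting $\limfunc{Fix}P_{Q}=Q$ and $\limfunc{Fix}\mathcal{L}\{P_{Q}\}=A^{-1}(Q)$ into the two inequalities of \eqref{e-dxFixV} then produces at once the lower bound $\tfrac{1}{\Vert A\Vert}d(Ax,\limfunc{im}A\cap Q)\le d(x,A^{-1}(Q))$ and the upper bound $d(x,A^{-1}(Q))\le\tfrac{1}{|A|}d(Ax,\limfunc{im}A\cap Q)$, which together yield \eqref{e-dxFixV2}.

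There is no genuine obstacle here beyond Lemma \ref{l-A1} itself: the corollary is a dictionary translation obtained by taking $T=P_{Q}$, and the only points that require checking are that $P_{Q}$ meets the quasi-nonexpansiveness/cutter hypotheses and that the fixed point identification $\limfunc{Fix}\mathcal{L}\{P_{Q}\}=A^{-1}(Q)$ holds, both of which are immediate from the paragraphs above. If one prefers a self-contained derivation of the lower bound rather than quoting \eqref{e-dxFixV}, it suffices to set $z:=P_{A^{-1}(Q)}x$, note $Az\in\limfunc{im}A\cap Q$, and combine $d(x,A^{-1}(Q))=\Vert x-z\Vert\ge\tfrac{1}{\Vert A\Vert}\Vert Ax-Az\Vert\ge\tfrac{1}{\Vert A\Vert}d(Ax,\limfunc{im}A\cap Q)$ with the matching reverse estimate obtained exactly as in \eqref{eAx-Azn}.
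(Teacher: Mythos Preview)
Your approach is exactly the paper's: its entire proof reads ``The result follows easily from Lemma~\ref{l-A1} with $T=P_{Q}$,'' and you have spelled out precisely that substitution together with the identification $\limfunc{Fix}\mathcal{L}\{P_{Q}\}=A^{-1}(Q)$ via Lemma~\ref{l-FixLT}(iii).

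One remark worth flagging: the corollary as printed displays an \emph{equality} on the left of \eqref{e-dxFixV2}, whereas Lemma~\ref{l-A1} only yields the inequality $\tfrac{1}{\Vert A\Vert}d(Ax,\limfunc{im}A\cap Q)\le d(x,A^{-1}(Q))$, which is also all you claim. The equality is in fact false in general (take $A=\mathrm{diag}(1,2)$ on $\mathbb{R}^{2}$, $Q=\{0\}$, $x=(1,0)$), so the ``$=$'' in \eqref{e-dxFixV2} appears to be a typographical slip for ``$\le$''; your derivation, like the paper's, proves the correct inequality version.
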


\begin{proof}
The result follows easily from Lemma \ref{l-A1} with $T=P_Q$.
\end{proof}

\begin{lemma}
\label{l-A2}Let $A:\mathcal{H}_{1}\rightarrow \mathcal{H}_{2}$ be a nonzero
bounded linear operator and let $T:\mathcal{H}_{2}\rightarrow \mathcal{H}%
_{2} $ be $\rho $-SQNE, where $\rho \geq 0$. Assume that $\limfunc{im}A\cap
\limfunc{Fix}T\neq \emptyset $. Then for any $x\in \mathcal{H}_{1}$, we have
\begin{equation}
\Vert T(Ax)-Ax\Vert ^{2}\leq \frac{2\Vert A\Vert ^{2}}{\rho +1}\Vert
\mathcal{L}\{T\}x-x\Vert \cdot d(x,\limfunc{Fix}\mathcal{L}\{T\})\text{.}
\label{e-TAx}
\end{equation}
\end{lemma}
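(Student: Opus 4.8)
The plan is to reduce the claimed estimate to the strong quasi-nonexpansivity of $T$ evaluated at the point $Ax$, tested against a carefully chosen fixed point of $T$. The crucial algebraic observation is that, directly from the definition \eqref{e-Land} of the Landweber operator, one has $A^\ast\big(T(Ax)-Ax\big)=\Vert A\Vert^2\big(\mathcal{L}\{T\}x-x\big)$, so the quantity $\Vert T(Ax)-Ax\Vert^2$ on the left and the factor $\Vert \mathcal{L}\{T\}x-x\Vert$ on the right are linked through the adjoint $A^\ast$. To bring in the distance $d(x,\limfunc{Fix}\mathcal{L}\{T\})$, I would first invoke Lemma \ref{l-FixLT}(iv): since $T$ is $\rho$-SQNE and $\limfunc{im}A\cap\limfunc{Fix}T\neq\emptyset$, the operator $\mathcal{L}\{T\}$ is itself $\rho$-SQNE and $\limfunc{Fix}\mathcal{L}\{T\}=A^{-1}(\limfunc{Fix}T)$. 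In particular $\limfunc{Fix}\mathcal{L}\{T\}$ is nonempty, closed and convex, so the metric projection $w:=P_{\limfunc{Fix}\mathcal{L}\{T\}}(x)$ is well defined and satisfies $\Vert x-w\Vert=d(x,\limfunc{Fix}\mathcal{L}\{T\})$; moreover $z:=Aw\in\limfunc{Fix}T$ by the fixed-point identity.

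With this choice in hand, I would apply the third characterization of $\rho$-SQNE from Lemma \ref{th:SQNEequiv}(iii) to $T$ at the point $Ax\in\mathcal{H}_2$ with the fixed point $z=Aw$, which gives $\frac{\rho+1}{2}\Vert T(Ax)-Ax\Vert^2\leq\langle T(Ax)-Ax,\,Aw-Ax\rangle$. Writing $Aw-Ax=A(w-x)$ and passing to the adjoint turns the right-hand side into $\langle A^\ast\big(T(Ax)-Ax\big),\,w-x\rangle=\Vert A\Vert^2\langle \mathcal{L}\{T\}x-x,\,w-x\rangle$, by the algebraic identity noted above. A single application of the Cauchy--Schwarz inequality then bounds this by $\Vert A\Vert^2\,\Vert \mathcal{L}\{T\}x-x\Vert\cdot\Vert w-x\Vert=\Vert A\Vert^2\,\Vert \mathcal{L}\{T\}x-x\Vert\cdot d(x,\limfunc{Fix}\mathcal{L}\{T\})$. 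Dividing through by $\frac{\rho+1}{2}$ yields exactly \eqref{e-TAx}.

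There is no serious obstacle here: unlike Lemma \ref{l-A1}, this estimate requires neither closedness of $\limfunc{im}A$ nor the quantity $|A|$, and the whole argument is essentially one application of the SQNE inequality followed by Cauchy--Schwarz. The only point requiring a little care is the selection of the fixed point $z=Aw$ as the image under $A$ of the nearest point $w\in\limfunc{Fix}\mathcal{L}\{T\}$ to $x$; this is precisely what simultaneously produces the factor $d(x,\limfunc{Fix}\mathcal{L}\{T\})$ and keeps $z$ inside $\limfunc{Fix}T$, so that the SQNE inequality for $T$ may legitimately be invoked.
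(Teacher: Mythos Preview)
Your proof is correct and follows essentially the same route as the paper: apply the $\rho$-SQNE inequality \eqref{eq:SQNEequiv:ineq1} for $T$ at $Ax$ with a fixed point of the form $Aw$, pass through the adjoint to recognize $\mathcal{L}\{T\}x-x$, and finish with Cauchy--Schwarz. The only cosmetic difference is that the paper first derives the bound for an arbitrary $z\in\limfunc{Fix}\mathcal{L}\{T\}$ and then specializes to $z=P_{\limfunc{Fix}\mathcal{L}\{T\}}x$, whereas you fix the projection from the outset.
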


\begin{proof}
Let $x\in \mathcal{H}_{1}$ and $z\in \limfunc{Fix}\mathcal{L}\{T\}$. We
recall that, by Lemma \ref{l-FixLT}, we have $\limfunc{Fix}\mathcal{L}%
\{T\}=A^{-1}(\limfunc{Fix}T)$. Hence $Az\in \limfunc{Fix}T$. By Lemma \ref%
{th:SQNEequiv} and by the Cauchy-Schwarz inequality, we have
\begin{eqnarray}
\Vert T(Ax)-Ax\Vert ^{2} &\leq &\frac{2}{\rho +1}\langle T(Ax)-Ax,\
Az-Ax\rangle   \notag \\
&=&\frac{2\Vert A\Vert ^{2}}{\rho +1}\left\langle \frac{1}{\Vert A\Vert ^{2}}%
A^{\ast }(T(Ax)-Ax),\ z-x\right\rangle   \notag \\
&\leq &\frac{2\Vert A\Vert ^{2}}{\rho +1}\Vert \mathcal{L}\{T\}x-x\Vert
\cdot \Vert z-x\Vert .
\end{eqnarray}%
Observe that for $z=P_{\limfunc{Fix}\mathcal{L}\{T\}}x$, we have $\Vert
z-x\Vert =d(x,\limfunc{Fix}\mathcal{L}\{T\})$, which completes the proof.
\end{proof}

\begin{theorem}
\label{t-regLand} Let $A:\mathcal{H}_{1}\rightarrow \mathcal{H}_{2}$ be a
nonzero bounded linear operator, let $T:\mathcal{H}_{2}\rightarrow \mathcal{H%
}_{2}$ be $\rho $-SQNE, where $\rho \geq 0$, and let $S\subseteq \mathcal{H}%
_{2}$ be nonempty. Assume that $\limfunc{im}A\cap \limfunc{Fix}T\neq
\emptyset $. Let $\mathcal{L}\{T\}:\mathcal{H}_{1}\rightarrow \mathcal{H}%
_{1} $ be the Landweber operator defined by \eqref{e-Land}. Then the
following statements hold:

\begin{enumerate}
\item[$\mathrm{(i)}$] If $T$ is weakly regular over $S$, then $\mathcal{L}%
\{T\}$ is weakly regular over $A^{-1}(S)$.

\item[$\mathrm{(ii)}$] If $\limfunc{im}A$ is closed, $T$ is regular over $S$%
, $S$ is bounded and the family $\{\limfunc{im}A,\limfunc{Fix}T\}$ is regular over $S$, then $\mathcal{L}%
\{T\}$ is regular over $A^{-1}(S)$.

\item[$\mathrm{(iii)}$] If $\limfunc{im}A$ is closed, $T$ is linearly
regular with modulus $\delta >0$ over $S$ and the family $\{\limfunc{im}A,%
\limfunc{Fix}T\}$ is linearly regular with modulus $\kappa >0$ over $S$,
then $\mathcal{L}\{T\}$ is linearly regular over $A^{-1}(S)$ with modulus
\begin{equation}
\Delta :=\frac{\rho +1}{2}\left( \frac{\delta |A|}{\kappa \Vert A\Vert }%
\right) ^{2}\text{,}  \label{e-Delta}
\end{equation}

that is, for any $x\in A^{-1}(S)$, we have
\begin{equation}
\Vert \mathcal{L}\{T\}x-x\Vert \geq \frac{\rho +1}{2}\left( \frac{\delta |A|%
}{\kappa \Vert A\Vert }\right) ^{2}d(x,\limfunc{Fix}\mathcal{L}\{T\})\text{.}
\label{e-DeltaInequality1}
\end{equation}
\end{enumerate}
\end{theorem}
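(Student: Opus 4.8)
The plan is to reduce all three statements to the two auxiliary estimates just proved, namely Lemma \ref{l-A1} and Lemma \ref{l-A2}, combined with the identity $\limfunc{Fix}\mathcal{L}\{T\}=A^{-1}(\limfunc{Fix}T)$ from Lemma \ref{l-FixLT}(iv) and the trivial observation that $Ax\in \limfunc{im}A$, so that $d(Ax,\limfunc{im}A)=0$ for every $x$. The common engine is Lemma \ref{l-A2}, which transfers the residual $\Vert \mathcal{L}\{T\}x-x\Vert$ in $\mathcal{H}_{1}$ to the residual $\Vert T(Ax)-Ax\Vert$ in $\mathcal{H}_{2}$, while Lemma \ref{l-A1} converts distances to $\limfunc{Fix}\mathcal{L}\{T\}$ into distances to $\limfunc{im}A\cap\limfunc{Fix}T$.

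For part (i) I would take $\{x_{k}\}\subseteq A^{-1}(S)$ with $\mathcal{L}\{T\}x_{k}-x_{k}\to 0$ and a weakly convergent subsequence $x_{n_{k}}\rightharpoonup x_{\infty}$. Since a weakly convergent sequence is bounded and $\limfunc{Fix}\mathcal{L}\{T\}\neq\emptyset$, the distances $d(x_{n_{k}},\limfunc{Fix}\mathcal{L}\{T\})$ stay bounded, so Lemma \ref{l-A2} forces $T(Ax_{n_{k}})-Ax_{n_{k}}\to 0$. Now $Ax_{n_{k}}\in S$ (because $x_{n_{k}}\in A^{-1}(S)$), and $Ax_{n_{k}}\rightharpoonup Ax_{\infty}$ by weak continuity of the bounded operator $A$; weak regularity of $T$ over $S$ then gives $Ax_{\infty}\in\limfunc{Fix}T$, that is, $x_{\infty}\in A^{-1}(\limfunc{Fix}T)=\limfunc{Fix}\mathcal{L}\{T\}$. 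Note that this part uses only Lemma \ref{l-A2}, so no closedness of $\limfunc{im}A$ is needed, consistently with the hypotheses.

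For part (ii) I take $\{x_{k}\}\subseteq A^{-1}(S)$ with $\mathcal{L}\{T\}x_{k}-x_{k}\to 0$; then $\{Ax_{k}\}\subseteq S$ is bounded because $S$ is bounded. First I would use Lemma \ref{l-A1} (here $\limfunc{im}A$ is closed) to bound $d(x_{k},\limfunc{Fix}\mathcal{L}\{T\})$ by $|A|^{-1}d(Ax_{k},\limfunc{im}A\cap\limfunc{Fix}T)$, which is bounded on the bounded set $S$; feeding this bound into Lemma \ref{l-A2} yields $T(Ax_{k})-Ax_{k}\to 0$. Regularity of $T$ over $S$ then gives $d(Ax_{k},\limfunc{Fix}T)\to 0$, and since $d(Ax_{k},\limfunc{im}A)=0$, the maximum of the two distances tends to zero. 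Regularity of the family $\{\limfunc{im}A,\limfunc{Fix}T\}$ over $S$ converts this into $d(Ax_{k},\limfunc{im}A\cap\limfunc{Fix}T)\to 0$, and a final application of Lemma \ref{l-A1} gives $d(x_{k},\limfunc{Fix}\mathcal{L}\{T\})\to 0$, as required.

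For part (iii), fix $x\in A^{-1}(S)$; if $d(x,\limfunc{Fix}\mathcal{L}\{T\})=0$ the inequality is trivial, so I may assume it is positive. Since $Ax\in S$, linear regularity of $T$ gives $\Vert T(Ax)-Ax\Vert\geq \delta\, d(Ax,\limfunc{Fix}T)$; as $d(Ax,\limfunc{im}A)=0$, linear regularity of the family yields $d(Ax,\limfunc{im}A\cap\limfunc{Fix}T)\leq \kappa\, d(Ax,\limfunc{Fix}T)$, and the right-hand inequality of Lemma \ref{l-A1} gives $d(Ax,\limfunc{im}A\cap\limfunc{Fix}T)\geq |A|\,d(x,\limfunc{Fix}\mathcal{L}\{T\})$. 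Chaining these shows $\Vert T(Ax)-Ax\Vert\geq \tfrac{\delta|A|}{\kappa}\,d(x,\limfunc{Fix}\mathcal{L}\{T\})$. Substituting $\Vert T(Ax)-Ax\Vert^{2}$ into Lemma \ref{l-A2} and dividing by $d(x,\limfunc{Fix}\mathcal{L}\{T\})$ produces exactly $\Vert\mathcal{L}\{T\}x-x\Vert\geq \tfrac{\rho+1}{2}\bigl(\tfrac{\delta|A|}{\kappa\Vert A\Vert}\bigr)^{2}d(x,\limfunc{Fix}\mathcal{L}\{T\})$, which is \eqref{e-DeltaInequality1}. The main obstacle throughout is keeping the residual transfer of Lemma \ref{l-A2} effective, i.e. controlling $d(x_{k},\limfunc{Fix}\mathcal{L}\{T\})$ — this is trivial for the bounded, respectively weakly convergent, subsequences in (i) and (ii), and is sidestepped in (iii) by working pointwise. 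The one piece of bookkeeping that really matters is the identity $d(Ax,\limfunc{im}A)=0$, which collapses the two-set regularity of $\{\limfunc{im}A,\limfunc{Fix}T\}$ into a one-set estimate and is precisely what lets the constants combine into $\Delta$ in \eqref{e-Delta}.
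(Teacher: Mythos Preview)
Your proposal is correct and follows essentially the same route as the paper: both proofs hinge on Lemma~\ref{l-A2} to transfer the residual $\Vert\mathcal{L}\{T\}x-x\Vert$ to $\Vert T(Ax)-Ax\Vert$, on Lemma~\ref{l-A1} to translate distances back via $|A|$, and on the observation $d(Ax,\limfunc{im}A)=0$ to collapse the two-set regularity to a one-set estimate. The only cosmetic difference is that the paper packages the boundedness step for parts (i) and (ii) into a single preliminary inequality before splitting into cases, whereas you invoke Lemma~\ref{l-A1} and Lemma~\ref{l-A2} separately in each part; the logic is identical.
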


\begin{proof}
For a bounded  sequence $ \{x_{k}\}_{k=0}^\infty\subseteq A^{-1}(S)$ and a point  $z\in \limfunc{Fix}\mathcal{%
L}\{T\}$, we have

\begin{equation}
\infty >R:=\sup_{k}\Vert x_{k}-z\Vert \geq d(x_{k},\limfunc{Fix}\mathcal{L}%
\{T\}).  \label{pr-RL(i)R}
\end{equation}%
By \eqref{e-TAx} and \eqref{pr-RL(i)R}, we obtain
\begin{eqnarray}
\Vert T(Ax_{k})-Ax_{k}\Vert ^{2} &\leq &\frac{2\Vert A\Vert ^{2}}{\rho +1}%
\Vert \mathcal{L}\{T\}x_{k}-x_{k}\Vert \cdot d(x_{k},\limfunc{Fix}\mathcal{L}%
\{T\})  \notag \\
&\leq &\frac{2R\Vert A\Vert ^{2}}{\rho +1}\Vert \mathcal{L}%
\{T\}x_{k}-x_{k}\Vert \text{.}  \label{pr-RL(i)ineq}
\end{eqnarray}

\textit{Part (i).} Let $\{x_{k}\}_{k=0}^\infty \subseteq A^{-1}(S)$ and $x\in \mathcal{H}%
_{1}$ be such that
\begin{equation}
x_{k}\rightharpoonup _{k}x\qquad \text{and}\qquad \Vert \mathcal{L}%
\{T\}x_{k}-x_{k}\Vert \rightarrow _{k}0,  \label{pr-RL(i)assumption}
\end{equation}%
and let $z\in \limfunc{Fix}\mathcal{L}\{T\}$. Since $\{x_{k}\}_{k=0}^\infty$  is bounded
as a weakly convergent sequence, (\ref{pr-RL(i)ineq}) and (\ref%
{pr-RL(i)assumption}) yield $\Vert T(Ax_{k})-Ax_{k}\Vert \rightarrow _{k}0$.
Observe that for any $y\in \mathcal{H}_{2}$, we get
\begin{equation}
\langle Ax_{k}-Ax,y\rangle =\langle x_{k}-x,A^{\ast }y\rangle \rightarrow
_{k}0\text{,}
\end{equation}%
and, consequently, $Ax_{k}\rightharpoonup Ax$. Since $Ax_{k}\in S$, by the
weak regularity of $T$, we obtain $Ax\in \limfunc{Fix}T$. The latter
statement is equivalent to $x\in A^{-1}(\limfunc{Fix}T)=\limfunc{Fix}%
\mathcal{L}\{T\}$, which completes the proof of part (i).

\textit{Part (ii).} Let $S$ be bounded and  let  $ \{x_{k}\}_{k=0}^\infty  \subseteq A^{-1}(S)$ be
such that
\begin{equation}
\Vert \mathcal{L}\{T\}x_{k}-x_{k}\Vert \rightarrow _{k}0\text{.}
\label{pr-RL(ii)assumption}
\end{equation}
By the second inequality in (\ref{e-dxFixV}), the sequence  $\{x_{k}\}_{k=0}^\infty$  is
bounded.  Using  (\ref{pr-RL(i)ineq}) and (\ref{pr-RL(ii)assumption}), we can
conclude that
\begin{equation}
\Vert TAx_{k}-Ax_{k}\Vert \rightarrow _{k}0\text{.}
\end{equation}%
Since $Ax_{k}\in S$,  using  the regularity of the operator $T$ over $S$, we
obtain
\begin{equation}
\max \big\{d(Ax_{k},\limfunc{Fix}T),\ d(Ax_{k},\limfunc{im}A)\big\}=d(Ax_{k},%
\limfunc{Fix}T)\rightarrow _{k}0\text{.}  \label{pr-RL(ii)limit}
\end{equation}%
By \eqref{e-dxFixV}, \eqref{pr-RL(ii)limit} and by the regularity of the
family $\{\limfunc{im}A,\limfunc{Fix}T\}$ over $S$, we arrive at
\begin{equation}
d(x_{k},\limfunc{Fix}\mathcal{L}\{T\})\leq \frac{1}{|A|}d(Ax_{k},\limfunc{im}%
A\cap \limfunc{Fix}T)\rightarrow _{k}0\text{.}  \label{pr-RL(ii)limit2}
\end{equation}%
Note that $|A|>0$ is guaranteed by the assumption that $\limfunc{im}A$ is
closed and by Lemma \ref{l-clR}. It is clear that \eqref{pr-RL(ii)limit2}
completes the proof of part (ii).

\textit{Part (iii).} Let $x\in A^{-1}(S)$ so that $Ax\in S$. By the linear
regularity of $T$ over $S$, the linear regularity of the family $\{\limfunc{%
im}A,\limfunc{Fix}T\}$ over $S$ and \eqref{e-dxFixV}, we get
\begin{eqnarray}
\Vert T(Ax)-Ax\Vert &\geq &\delta d(Ax,\limfunc{Fix}T)  \notag
\label{pr-RL(iii)ineq1} \\
&=&\delta \max \{d(Ax,\limfunc{Fix}T),\ d(Ax,\limfunc{im}A)\}  \notag \\
&\geq &\frac{\delta }{\kappa }d(Ax,\limfunc{im}A\cap \limfunc{Fix}T)  \notag
\\
&\geq &\frac{\delta }{\kappa }|A|d(x,\limfunc{Fix}\mathcal{L}\{T\})\text{.}
\end{eqnarray}%
Moreover, by \eqref{e-TAx},
\begin{equation}
\Vert T(Ax)-Ax\Vert ^{2}\leq \frac{2\Vert A\Vert ^{2}}{\rho +1}\Vert
\mathcal{L}\{T\}x-x\Vert \cdot d(x,\limfunc{Fix}\mathcal{L}\{T\}),
\label{pr-RL(iii)ineq2}
\end{equation}%
which, when combined with \eqref{pr-RL(iii)ineq1}, leads to (\ref%
{e-DeltaInequality1}). This completes the proof.
\end{proof}

\begin{remark}
\rm\ %
\label{r-LandRelCutter} Assume that $T$ is a cutter ($\rho =1$) and that all
the assumptions of Theorem \ref{t-regLand}(iii) are satisfied. Let $\lambda
\in (0,2]$. Then, for the relaxation $T_{\lambda }$ of $T$, inequality (\ref{e-DeltaInequality1}) takes the following form:
\begin{equation}
\Vert \mathcal{L}\{T_{\lambda }\}x-x\Vert =\lambda \Vert \mathcal{L}%
\{T\}x-x\Vert \geq \lambda \left( \frac{\delta |A|}{\kappa \Vert A\Vert }%
\right) ^{2}d(x,\limfunc{Fix}\mathcal{L}\{T_{\lambda }\})\text{,}
\label{e-DeltaInequalityCutter}
\end{equation}%
where $x\in S$. In particular, if $T=P_{Q}$, where $Q\subseteq \mathcal{H}%
_{2}$ is nonempty, closed and convex, and $Q\cap \limfunc{im}A\neq \emptyset
$, then
\begin{equation}
\Vert \mathcal{L}_{\lambda }\{P_{Q}\}x-x\Vert \geq \lambda \left( \frac{|A|}{%
\kappa \Vert A\Vert }\right) ^{2}d(x,A^{-1}(Q))\text{.}
\label{e-DeltaInequalityQ}
\end{equation}
\end{remark}

\begin{corollary}
\label{c-BoundRegLand} Let $A:\mathcal{H}_{1}\rightarrow \mathcal{H}_{2}$ be
a nonzero bounded linear operator and let $T:\mathcal{H}_{2}\rightarrow
\mathcal{H}_{2}$ be $\rho $-SQNE, where $\rho \geq 0$. Assume that $\limfunc{%
im}A\cap \limfunc{Fix}T\neq \emptyset $. Let $\mathcal{L}\{T\}:\mathcal{H}%
_{1}\rightarrow \mathcal{H}_{1}$ be the Landweber operator defined by %
\eqref{e-Land}. Then the following statements hold:

\begin{enumerate}
\item[$\mathrm{(i)}$] If $T$ is weakly regular, then $\mathcal{L }\{T\}$ is
weakly regular.

\item[$\mathrm{(ii)}$] If $\limfunc{im}A$ is closed, $T$ is boundedly
regular and $\{\limfunc{im}A,\limfunc{Fix}T\}$ is boundedly regular, then $%
\mathcal{L }\{T\}$ is boundedly regular.

\item[$\mathrm{(iii)}$] If $\limfunc{im}A$ is closed, $T$ is boundedly
linearly regular and the family $\{\limfunc{im}A,\limfunc{Fix}T\}$ is
boundedly linearly regular, then $\mathcal{L}\{T\}$ is boundedly linearly
regular.
\end{enumerate}
\end{corollary}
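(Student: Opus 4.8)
The plan is to deduce the corollary directly from Theorem \ref{t-regLand} by a restriction argument. The engine is the elementary \emph{monotonicity} of every regularity notion in Definition \ref{def:Reg} under passing to subsets: if $\mathcal{L}\{T\}$ is weakly regular (resp.\ regular, resp.\ linearly regular) over a set $S'$ and $B\subseteq S'$, then it enjoys the same property over $B$, with the same modulus in the linear case. This is immediate from the quantifier structure of the definitions, since each defining condition ranges over sequences contained in (resp.\ points of) the set, and shrinking the set only removes test sequences/points. Hence for each part it suffices, given an arbitrary bounded $B\subseteq\mathcal{H}_1$, to exhibit a set $S\subseteq\mathcal{H}_2$ satisfying the hypotheses of the corresponding part of Theorem \ref{t-regLand} and with $B\subseteq A^{-1}(S)$.

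The single geometric fact I would use is that a bounded linear operator maps bounded sets to bounded sets: for bounded $B\subseteq\mathcal{H}_1$ the image $S:=A(B)\subseteq\mathcal{H}_2$ is bounded, and trivially $B\subseteq A^{-1}(A(B))=A^{-1}(S)$. This one choice of $S$ handles all three parts uniformly.

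For part (i), since $T$ is weakly regular it is in particular weakly regular over $S:=A(B)$; Theorem \ref{t-regLand}(i) then yields weak regularity of $\mathcal{L}\{T\}$ over $A^{-1}(S)\supseteq B$, and as $B$ is an arbitrary bounded set, $\mathcal{L}\{T\}$ is boundedly weakly regular, which by the convention recorded in Definition \ref{def:Reg} is simply weak regularity. (One may instead apply the theorem once with $S=\mathcal{H}_2$ to get weak regularity over all of $A^{-1}(\mathcal{H}_2)=\mathcal{H}_1$ in a single step.) For part (ii), fix bounded $B$ and put $S:=A(B)$, which is bounded; bounded regularity of $T$ gives regularity of $T$ over $S$, and bounded regularity of the family $\{\limfunc{im}A,\limfunc{Fix}T\}$ gives its regularity over $S$. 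Together with the hypothesis that $\limfunc{im}A$ is closed, all assumptions of Theorem \ref{t-regLand}(ii) hold for this $S$, so $\mathcal{L}\{T\}$ is regular over $A^{-1}(S)\supseteq B$, hence over $B$; since $B$ was arbitrary bounded, $\mathcal{L}\{T\}$ is boundedly regular. Part (iii) is identical in structure: with the same $S=A(B)$, bounded linear regularity supplies a modulus $\delta>0$ for $T$ over $S$ and a modulus $\kappa>0$ for $\{\limfunc{im}A,\limfunc{Fix}T\}$ over $S$, so Theorem \ref{t-regLand}(iii) gives linear regularity of $\mathcal{L}\{T\}$ over $A^{-1}(S)\supseteq B$ with modulus $\Delta$ as in \eqref{e-Delta}; restricting to $B$ and letting $B$ vary over bounded sets shows $\mathcal{L}\{T\}$ is boundedly linearly regular.

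I do not expect a genuine obstacle here, since the content is entirely contained in Theorem \ref{t-regLand}. The only points needing a word of care are the monotonicity of the regularity notions under subsets (immediate from the definitions) and the boundedness of $A(B)$; I would also note in passing that the condition $|A|>0$ implicit in parts (ii) and (iii) is guaranteed by the closedness of $\limfunc{im}A$ via Lemma \ref{l-clR}.
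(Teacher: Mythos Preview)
Your proposal is correct and is precisely the intended derivation: the paper presents Corollary~\ref{c-BoundRegLand} without an explicit proof, treating it as an immediate consequence of Theorem~\ref{t-regLand} obtained by specializing to bounded sets. Your choice $S:=A(B)$ (and the alternative $S=\mathcal{H}_2$ for part~(i)) together with the monotonicity-under-subsets observation makes this specialization fully rigorous.
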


\begin{remark}
\label{r-regOfLand}
\rm\ %
Part (i) of Corollary \ref{c-BoundRegLand} was proved in \cite[Lemma 4.1]%
{Ceg15}. This part can also be deduced from the proof of \cite[Theorem 3.3]%
{WX11}. Part (ii) of Corollary \ref{c-BoundRegLand} was proved in \cite[%
Theorem 4.2]{CM16}, under the assumption that $A$ is compact and $\underline{%
\lambda }:=\inf \Lambda ^{+}(A^{\ast }A)>0$. Note that the latter result
follows from Corollary \ref{c-BoundRegLand}(ii). Indeed, by the alternative
(i) presented in Lemma \ref{l-compOp}, we see that $\underline{\lambda }>0$
only when $\limfunc{im}A$ is closed.
\end{remark}

\section{Extrapolated Landweber Operator\label{s-ELtO}}

Let $T:\mathcal{H}_{2}\rightarrow \mathcal{H}_{2}$ be a given operator and $%
\sigma :\mathcal{H}_{1}\rightarrow \lbrack 1,\infty )$ be an \textit{%
extrapolation function}.

\begin{definition}
\rm\ %
The operator $\mathcal{L}_{\sigma }\{T\}:\mathcal{H}_{1}\rightarrow \mathcal{%
H}_{1}$, defined by
\begin{equation}
\mathcal{L}_{\sigma }\{T\}x:=x+\sigma (x)\big(\mathcal{L}\{T\}x-x\big),
 \quad x\in \mathcal H_1,
\label{e-extLand0}
\end{equation}%
is called an \textit{extrapolated Landweber operator} ( corresponding  to $T$ and $%
\sigma $).
\end{definition}

In this section, following \cite{LMWX12} and \cite{CM16} (see the Introduction),
we consider the extrapolated Landweber operator $\mathcal{L}_{\sigma }\{T\}$
with $\sigma $ bounded from above by $\tau $ defined by
\begin{equation}
\tau (x):=%
\begin{cases}
\displaystyle\left( \frac{\Vert A\Vert \cdot \Vert T(Ax)-Ax\Vert }{\Vert
A^{\ast }(T(Ax)-Ax)\Vert }\right) ^{2}\text{,} & \text{if }Ax\notin \limfunc{%
Fix}T\text{,} \\
1\text{,} & \text{if }Ax\in \limfunc{Fix}T\text{.}%
\end{cases}
\label{e-tau}
\end{equation}%
By (\ref{e-FixT}), $\tau (x)$ and $\mathcal{L}_{\tau }\{T\}x$ are both well
defined. Moreover, it is not difficult to see that $\tau (x)\geq 1$, since
\begin{equation}
\Vert A^{\ast }(T(Ax)-Ax)\Vert \leq \Vert A\Vert \cdot \Vert T(Ax)-Ax\Vert
\end{equation}%
and thus $\mathcal{L}_{\sigma }\{T\}x$ is also well defined.

Observe that when $\sigma =\tau $, then  we have
\begin{equation}
\mathcal{L}_{\tau }\{T\}x=%
\begin{cases}
\displaystyle x+\frac{\Vert T(Ax)-Ax\Vert ^{2}}{\Vert A^{\ast
}(T(Ax)-Ax)\Vert ^{2}}A^{\ast }(T(Ax)-Ax))\text{,} & \text{if }Ax\notin
\limfunc{Fix}T\text{,} \\
x\text{,} & \text{if }Ax\in \limfunc{Fix}T\text{,}%
\end{cases}
\label{e-extLand}
\end{equation}%
that is, $\mathcal{L}_{\tau }\{T\}x$ does not depend on $\Vert A\Vert $.
Similarly to Lemma \ref{l-FixLT}, we have the following result.

\begin{lemma}
\label{l-extLand-SQNE} Let $\mathcal{L}\{T\}$ be the Landweber operator  corresponding  to $T:\mathcal{H}_{2}\rightarrow \mathcal{H}_{2}$, $\lambda \in
(0,1] $ and $\rho \geq 0$. Let $\sigma \colon \mathcal{H}_1\to [1,\infty)$
be an extrapolation function bounded from above by $\tau$  as defined in %
\eqref{e-tau}. If $T$ is $\rho $-SQNE and $\limfunc{im}A\cap \limfunc{Fix}%
T\neq \emptyset $, then $\mathcal{L}_{\lambda\sigma}\{T\}:=\limfunc{Id}%
+\lambda\sigma(\cdot)(\mathcal{L}_\sigma\{T\}-\limfunc{Id})$ is also $\rho$%
-SQNE and we have $\limfunc{Fix}\mathcal{L}_{\lambda\sigma}\{T\}=A^{-1}(%
\limfunc{Fix}T)$.
\end{lemma}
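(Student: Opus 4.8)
The plan is to read $\mathcal{L}_{\lambda\sigma}\{T\}$ as the $\lambda$-relaxation of the extrapolated operator $\mathcal{L}_{\sigma}\{T\}$ and then to verify characterization (iii) of $\rho$-SQNE operators in Lemma \ref{th:SQNEequiv}. Using \eqref{e-extLand0} we have $\mathcal{L}_{\sigma}\{T\}x-x=\sigma(x)\big(\mathcal{L}\{T\}x-x\big)$, so the displacement of the operator under study is $\mathcal{L}_{\lambda\sigma}\{T\}x-x=\lambda\sigma(x)\big(\mathcal{L}\{T\}x-x\big)$. Throughout I abbreviate $v:=T(Ax)-Ax$, so that $\mathcal{L}\{T\}x-x=\|A\|^{-2}A^{\ast}v$ by \eqref{e-Land}.

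First I would settle the fixed-point set. Because $\lambda>0$ and $\sigma(x)\geq 1>0$, the equation $\mathcal{L}_{\lambda\sigma}\{T\}x=x$ is equivalent to $\mathcal{L}\{T\}x=x$; hence Lemma \ref{l-FixLT}(iv) yields $\limfunc{Fix}\mathcal{L}_{\lambda\sigma}\{T\}=\limfunc{Fix}\mathcal{L}\{T\}=A^{-1}(\limfunc{Fix}T)$, which is nonempty since $\limfunc{im}A\cap\limfunc{Fix}T\neq\emptyset$.

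The heart of the argument is the extrapolation inequality
\begin{equation}
\langle \mathcal{L}\{T\}x-x,\ z-x\rangle\ \geq\ \frac{\rho+1}{2}\,\tau(x)\,\|\mathcal{L}\{T\}x-x\|^{2}\tag{$\ast$}
\end{equation}
valid for every $x\in\mathcal{H}_1$ and every $z\in A^{-1}(\limfunc{Fix}T)$. To prove it, observe that $Az\in\limfunc{Fix}T$, so applying Lemma \ref{th:SQNEequiv}(iii) to the $\rho$-SQNE operator $T$ \emph{at the point} $Ax$ gives $\langle v,\ Az-Ax\rangle\geq\frac{\rho+1}{2}\|v\|^{2}$. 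Transferring $A^{\ast}$ across the inner product, $\langle\mathcal{L}\{T\}x-x,\ z-x\rangle=\|A\|^{-2}\langle v,\ Az-Ax\rangle\geq\frac{\rho+1}{2}\|A\|^{-2}\|v\|^{2}$. The factor $\tau$ then appears through the identity $\tau(x)\|\mathcal{L}\{T\}x-x\|^{2}=\|A\|^{-2}\|v\|^{2}$, obtained by inserting $\|\mathcal{L}\{T\}x-x\|^{2}=\|A\|^{-4}\|A^{\ast}v\|^{2}$ into the definition \eqref{e-tau} of $\tau$; combining the two displays gives $(\ast)$. The degenerate case $Ax\in\limfunc{Fix}T$ is immediate, since then $v=A^{\ast}v=0$ by \eqref{e-FixT}, both sides of $(\ast)$ vanish, and $\tau(x)=1$.

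Finally I would check condition (iii) of Lemma \ref{th:SQNEequiv} for $\mathcal{L}_{\lambda\sigma}\{T\}$. Multiplying $(\ast)$ by $\lambda\sigma(x)>0$ and using the displacement formula gives
\begin{equation}
\langle \mathcal{L}_{\lambda\sigma}\{T\}x-x,\ z-x\rangle\ \geq\ \frac{\rho+1}{2}\,\lambda\sigma(x)\tau(x)\,\|\mathcal{L}\{T\}x-x\|^{2},
\end{equation}
whereas $\frac{\rho+1}{2}\|\mathcal{L}_{\lambda\sigma}\{T\}x-x\|^{2}=\frac{\rho+1}{2}\lambda^{2}\sigma(x)^{2}\|\mathcal{L}\{T\}x-x\|^{2}$. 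Comparing the two, the required estimate reduces to $\lambda\sigma(x)\tau(x)\geq\lambda^{2}\sigma(x)^{2}$, i.e.\ to $\tau(x)\geq\lambda\sigma(x)$, which holds because $\lambda\leq 1$ and $\sigma(x)\leq\tau(x)$. By the equivalence (i)$\Leftrightarrow$(iii) of Lemma \ref{th:SQNEequiv}, $\mathcal{L}_{\lambda\sigma}\{T\}$ is $\rho$-SQNE, and together with the first step this completes the proof. I expect the main obstacle to be establishing $(\ast)$, and specifically spotting the cancellation $\tau(x)\|\mathcal{L}\{T\}x-x\|^{2}=\|A\|^{-2}\|v\|^{2}$ that is engineered into the definition of $\tau$; note in particular that $(\ast)$ is strictly stronger than what applying Lemma \ref{th:SQNEequiv}(iii) to $\mathcal{L}\{T\}$ directly would give, the gain being exactly the factor $\tau(x)\geq 1$. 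Once $(\ast)$ is in hand, the fixed-point identity and the final SQNE verification are routine, the sole quantitative input being $\lambda\sigma\leq\tau$.
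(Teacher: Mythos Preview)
Your proof is correct. The route differs from the paper's only in packaging: the paper quotes \cite[Theorem 4.1]{CM16} for the fact that $\mathcal{L}_{\tau}\{T\}$ is $\rho$-SQNE, then writes $\mathcal{L}_{\lambda\sigma}\{T\}$ as the $\alpha(\cdot)$-relaxation of $\mathcal{L}_{\tau}\{T\}$ with $\alpha(x)=\lambda\sigma(x)/\tau(x)\in(0,1]$ and invokes Lemma~\ref{th:SQNEequiv}(iv). Your inequality $(\ast)$ is precisely the content of that cited theorem, rewritten via Lemma~\ref{th:SQNEequiv}(iii); the identity $\tau(x)\Vert\mathcal{L}\{T\}x-x\Vert^{2}=\Vert A\Vert^{-2}\Vert v\Vert^{2}$ you isolate is exactly the mechanism behind it. Your final comparison $\tau(x)\geq\lambda\sigma(x)$ is the same observation as $\alpha(x)\leq 1$ in the paper. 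So the two arguments are the same at heart; yours has the virtue of being self-contained and of making transparent why the specific form of $\tau$ in \eqref{e-tau} is the ``right'' extrapolation bound, whereas the paper's version is shorter by outsourcing the computation. (Incidentally, you have silently and correctly read the displayed definition in the lemma as $\mathcal{L}_{\lambda\sigma}\{T\}=\limfunc{Id}+\lambda\sigma(\cdot)(\mathcal{L}\{T\}-\limfunc{Id})$, consistent with the paper's own proof and the later remark; the $\mathcal{L}_{\sigma}\{T\}$ appearing there is a typo.)
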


\begin{proof}
The operator $\mathcal{L}_{\tau }\{T\}$ is $\rho$-SQNE by \cite[Theorem 4.1]%
{CM16}. Observe that by defining $\alpha(x):=\lambda\frac{\sigma(x)}{\tau(x)}%
\in(0,1]$, we have $\mathcal{L}_{\lambda\sigma }\{T\}x =x+\alpha(x)(\mathcal{%
L}_\tau\{T\}x-x), $ that is, $\mathcal{L}_{\lambda\sigma }\{T\}$ is  an  $\alpha(\cdot)$-relaxation of $\mathcal{L}_{\tau }\{T\}$. Thus the result
follows by \eqref{eq:SQNEequiv:ineq2} from Lemma \ref{th:SQNEequiv}.
\end{proof}

\bigskip Observe that for any  point  $x\in \mathcal{H}_{1}$, we have
\begin{equation}
\Vert \mathcal{L}_{\sigma }\{T\}x-x\Vert =\sigma (x)\Vert \mathcal{L}%
\{T\}x-x\Vert \geq \Vert \mathcal{L}\{T\}x-x\Vert .  \label{e-UV}
\end{equation}%
This, when combined with either Theorem \ref{t-regLand} or Corollary \ref%
{c-BoundRegLand}, leads to the following two results.

\begin{theorem}
\label{t-regExtrLand} Let $A:\mathcal{H}_{1}\rightarrow \mathcal{H}_{2}$ be
a nonzero bounded linear operator, let $T:\mathcal{H}_{2}\rightarrow
\mathcal{H}_{2}$ be $\rho $-SQNE, where $\rho \geq 0$ and let $S\subseteq
\mathcal{H}_{2}$ be nonempty. Assume that $\limfunc{im}A\cap \limfunc{Fix}%
T\neq \emptyset $. Let $\mathcal{L}_{\sigma }\{T\}:\mathcal{H}%
_{1}\rightarrow \mathcal{H}_{1}$ be the extrapolated Landweber operator with
$\sigma =\tau $, where $\tau $ is defined by \eqref{e-tau}. Then the following statements hold:

\begin{enumerate}
\item[$\mathrm{(i)}$] If $T$ is weakly regular over $S$, then $\mathcal{L}%
_\sigma \{T\}$ is weakly regular over $A^{-1}(S)$.

\item[$\mathrm{(ii)}$] If $\limfunc{im}A$ is closed, $T$ is regular over $S$%
, $S$ is bounded and $\{\limfunc{im}A,\limfunc{Fix}T\}$ is regular over $S$,
then $\mathcal{L}_{\sigma }\{T\}$ is regular over $A^{-1}(S)$.

\item[$\mathrm{(iii)}$] If $\limfunc{im}A$ is closed, $T$ is linearly
regular with modulus $\delta >0$ over $S$ and the family $\{\limfunc{im}A,
\limfunc{Fix}T\}$ is linearly regular with modulus $\kappa >0$ over $S$,
then $\mathcal{L}_{\sigma }\{T\}$ is linearly regular over $A^{-1}(S)$ with
modulus $\Delta $ defined by \eqref{e-Delta}. Moreover, for any $x\in
A^{-1}(S)$, we have
\begin{equation}
\Vert \mathcal{L}_{\sigma }\{T\}x-x\Vert \geq \sigma (x)\frac{\rho +1}{2}%
\left( \frac{\delta |A|}{\kappa \Vert A\Vert }\right) ^{2}d(x,\limfunc{Fix}%
\mathcal{L}_{\sigma }\{T\})\text{.}  \label{e-DeltaInequality}
\end{equation}
\end{enumerate}
\end{theorem}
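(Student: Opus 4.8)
The plan is to deduce all three statements directly from the corresponding parts of Theorem \ref{t-regLand}, exploiting the fact that the extrapolated operator differs from the plain Landweber operator only through the scalar factor $\sigma(x)\geq 1$. Two ingredients recorded above carry essentially the entire argument. First, by Lemma \ref{l-extLand-SQNE} (applied with $\lambda=1$ and $\sigma=\tau$) together with Lemma \ref{l-FixLT}(iv), the operator $\mathcal{L}_{\sigma}\{T\}$ shares its fixed point set with $\mathcal{L}\{T\}$, namely $\limfunc{Fix}\mathcal{L}_{\sigma}\{T\}=A^{-1}(\limfunc{Fix}T)=\limfunc{Fix}\mathcal{L}\{T\}$. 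Second, the displacement identity \eqref{e-UV} gives $\Vert\mathcal{L}_{\sigma}\{T\}x-x\Vert=\sigma(x)\Vert\mathcal{L}\{T\}x-x\Vert$, so that the extrapolated displacement dominates the plain one pointwise. Consequently, any sequence along which the extrapolated displacement vanishes is automatically one along which the plain displacement vanishes, and any point recognized as a fixed point of $\mathcal{L}\{T\}$ is simultaneously a fixed point of $\mathcal{L}_{\sigma}\{T\}$.

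For part (i) I would take $\{x_{k}\}_{k=0}^{\infty}\subseteq A^{-1}(S)$ with $x_{n_{k}}\rightharpoonup x_{\infty}$ and $\Vert\mathcal{L}_{\sigma}\{T\}x_{k}-x_{k}\Vert\to 0$. By \eqref{e-UV} this forces $\Vert\mathcal{L}\{T\}x_{k}-x_{k}\Vert\to 0$, so the weak regularity of $\mathcal{L}\{T\}$ over $A^{-1}(S)$ from Theorem \ref{t-regLand}(i) yields $x_{\infty}\in\limfunc{Fix}\mathcal{L}\{T\}=\limfunc{Fix}\mathcal{L}_{\sigma}\{T\}$. Part (ii) is handled in the same way: given $\{x_{k}\}_{k=0}^{\infty}\subseteq A^{-1}(S)$ with $\Vert\mathcal{L}_{\sigma}\{T\}x_{k}-x_{k}\Vert\to 0$, inequality \eqref{e-UV} again reduces the hypothesis to $\Vert\mathcal{L}\{T\}x_{k}-x_{k}\Vert\to 0$, whence Theorem \ref{t-regLand}(ii) gives $d(x_{k},\limfunc{Fix}\mathcal{L}\{T\})\to 0$, and the coincidence of fixed point sets finishes the proof. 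The only point deserving attention is boundedness of $\{x_{k}\}$, but this is immediate since $Ax_{k}\in S$ is bounded and the second inequality in \eqref{e-dxFixV} then bounds $d(x_{k},\limfunc{Fix}\mathcal{L}\{T\})$; indeed this is exactly the boundedness step already carried out within the proof of Theorem \ref{t-regLand}(ii).

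For part (iii) I would start from the quantitative estimate \eqref{e-DeltaInequality1} established in Theorem \ref{t-regLand}(iii), valid for every $x\in A^{-1}(S)$, and simply multiply it through by $\sigma(x)$. Combining this with \eqref{e-UV} gives
\begin{equation*}
\Vert\mathcal{L}_{\sigma}\{T\}x-x\Vert=\sigma(x)\Vert\mathcal{L}\{T\}x-x\Vert\geq\sigma(x)\frac{\rho+1}{2}\left(\frac{\delta|A|}{\kappa\Vert A\Vert}\right)^{2}d(x,\limfunc{Fix}\mathcal{L}\{T\}),
\end{equation*}
which is precisely \eqref{e-DeltaInequality} once the two fixed point sets are identified. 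Since $\sigma(x)\geq 1$, discarding the factor $\sigma(x)$ shows that $\mathcal{L}_{\sigma}\{T\}$ is linearly regular over $A^{-1}(S)$ with the same modulus $\Delta$ as in \eqref{e-Delta}.

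In truth there is no substantial obstacle: the whole theorem is a corollary of Theorem \ref{t-regLand} obtained through the monotone scaling relation \eqref{e-UV} and the equality of fixed point sets. The one place to stay attentive is the interplay between the two bounds on the extrapolation function, $1\leq\sigma\leq\tau$. The lower bound $\sigma\geq 1$ is exactly what makes the reduction to the non-extrapolated operator legitimate, while the upper bound $\sigma\leq\tau$ is used only to invoke Lemma \ref{l-extLand-SQNE} and guarantee that $\mathcal{L}_{\sigma}\{T\}$ remains $\rho$-SQNE with the advertised fixed point set; the same reasoning therefore applies verbatim to any extrapolation function satisfying $1\leq\sigma\leq\tau$, and in particular to the stated case $\sigma=\tau$.
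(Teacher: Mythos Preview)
Your proposal is correct and follows exactly the paper's approach: the paper's entire proof consists of the single observation \eqref{e-UV} together with a reference to Theorem \ref{t-regLand}, and you have simply spelled out the routine details of that reduction. The only superfluous step is your boundedness remark in part (ii), since once Theorem \ref{t-regLand}(ii) has been established its conclusion applies directly to any sequence in $A^{-1}(S)$ without further justification.
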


\begin{corollary}
Let $A:\mathcal{H}_{1}\rightarrow \mathcal{H}_{2}$ be a nonzero bounded
linear operator and let $T:\mathcal{H}_{2}\rightarrow \mathcal{H}_{2}$ be $%
\rho $-SQNE, where $\rho \geq 0$. Assume that $\limfunc{im}A\cap \limfunc{Fix%
}T\neq \emptyset $. Let $\mathcal{L}_{\sigma }\{T\}:\mathcal{H}%
_{1}\rightarrow \mathcal{H}_{1}$ be the extrapolated Landweber operator with
$\sigma =\tau $, where $\tau $ is defined by \eqref{e-tau}. Then the following statements hold:

\begin{enumerate}
\item[$\mathrm{(i)}$] If $T$ is weakly regular, then $\mathcal{L}_{\sigma
}\{T\}$ is weakly regular.

\item[$\mathrm{(ii)}$] If $\limfunc{im}A$ is closed, $T$ is boundedly
regular and the family $\{\limfunc{im}A,\limfunc{Fix}T\}$ is boundedly regular, then $\mathcal{L}_{\sigma }\{T\}$ is boundedly regular.

\item[$\mathrm{(iii)}$] If $\limfunc{im}A$ is closed, $T$ is boundedly
linearly regular and the family $\{\limfunc{im}A,\limfunc{Fix}T\}$ is
boundedly linearly regular, then  $\mathcal{L}_\sigma\{T\}$  is boundedly linearly
regular.
\end{enumerate}
\end{corollary}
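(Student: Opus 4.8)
The plan is to derive each of the three assertions from the corresponding part of Theorem~\ref{t-regExtrLand} by specializing its ``over $S$'' hypotheses to a suitable bounded set $S\subseteq\mathcal{H}_2$, and then to use the evident monotonicity of the regularity notions of Definition~\ref{def:Reg}: each of them, when it holds over a set, automatically holds over every subset of that set (for regularity because a sequence in the subset is also a sequence in the larger set; for linear regularity because the defining inequality \eqref{eq:def:LReg} is simply inherited).

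First I would treat part (i). Since weak and boundedly weak regularity coincide (as noted after Definition~\ref{def:Reg}), the hypothesis that $T$ is weakly regular is the same as $T$ being weakly regular over $S=\mathcal{H}_2$. Feeding this into Theorem~\ref{t-regExtrLand}(i) gives weak regularity of $\mathcal{L}_{\sigma}\{T\}$ over $A^{-1}(\mathcal{H}_2)=\mathcal{H}_1$, which is exactly weak regularity.

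For parts (ii) and (iii) I would fix an arbitrary bounded set $B\subseteq\mathcal{H}_1$; establishing the relevant regularity over every such $B$ is, by definition, what ``boundedly'' requires. The crucial step is to pass to the image: because $A$ is a bounded linear operator, $S:=A(B)$ is bounded in $\mathcal{H}_2$ (if $B\subseteq B(0,r)$, then $A(B)\subseteq B(0,\Vert A\Vert r)$), and one has the inclusion $B\subseteq A^{-1}(A(B))=A^{-1}(S)$. The bounded regularity (resp.\ bounded linear regularity) hypotheses on $T$ and on the pair $\{\limfunc{im}A,\limfunc{Fix}T\}$ then deliver precisely the regularity (resp.\ linear regularity) over this bounded $S$, so Theorem~\ref{t-regExtrLand}(ii) (resp.\ (iii)) yields regularity (resp.\ linear regularity, with modulus $\Delta$ from \eqref{e-Delta} formed out of the moduli over $S$) of $\mathcal{L}_{\sigma}\{T\}$ over $A^{-1}(S)$. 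By the monotonicity noted above this restricts to the subset $B$, and since $B$ was an arbitrary bounded set, the desired bounded (resp.\ bounded linear) regularity of $\mathcal{L}_{\sigma}\{T\}$ follows.

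The one place demanding care---and the reason a single global choice of $S$ does not suffice for (ii) and (iii)---is that $A^{-1}(S)$ is typically unbounded, since through any point $w$ it contains it also contains the whole translate $w+\ker A$; thus bounded subsets of $\mathcal{H}_1$ are generally not preimages of bounded subsets of $\mathcal{H}_2$. I expect this to be the only subtlety, and it is resolved by relying solely on the one-sided inclusion $B\subseteq A^{-1}(A(B))$ with $A(B)$ bounded, together with the monotonicity of regularity under passing to subsets, rather than on any identification of the two collections of bounded sets.
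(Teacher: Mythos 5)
Your derivation is correct and matches the paper's (implicit) argument: the paper presents this corollary as an immediate consequence of Theorem \ref{t-regExtrLand} (equivalently, of the displacement inequality \eqref{e-UV} together with Corollary \ref{c-BoundRegLand}), and your specialization $S:=A(B)$ for an arbitrary bounded $B\subseteq\mathcal{H}_1$, combined with the inclusion $B\subseteq A^{-1}(A(B))$ and the monotonicity of (weak/linear) regularity under passing to subsets, is exactly the routine deduction the paper leaves to the reader. The subtlety you flag---that $A^{-1}(S)$ is unbounded, so one must argue via the one-sided inclusion rather than trying to match the two collections of bounded sets---is correctly identified and correctly resolved.
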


\begin{remark}
\rm\
As in Remark \ref{r-LandRelCutter}, assume that $T$ is a cutter ($\rho =1$)
and that all the assumptions of Theorem \ref{t-regExtrLand}(iii) are
satisfied. Let $\lambda \in (0,2]$. Then the relaxed Landweber operator $%
\mathcal{L}_{\lambda \sigma }\{T\}$ defined by
\begin{equation}
\mathcal{L}_{\lambda \sigma }\{T\}x :=x+\lambda \sigma (x)\big(\mathcal{L}%
\{T\}x-x\big), \quad  x\in\mathcal H_1,
\end{equation}%
is $\frac{2-\lambda }{\lambda }$-SQNE and satisfies
\begin{equation}
\Vert \mathcal{L}_{\lambda \sigma }\{T\}x-x\Vert =\lambda \sigma (x)\Vert
\mathcal{L}\{T\}x-x\Vert \geq \lambda \sigma (x)\left( \frac{\delta |A|}{%
\kappa \Vert A\Vert }\right) ^{2}d(x,\limfunc{Fix}\mathcal{L}_{\lambda
\sigma }\{T\})\text{,}
\end{equation}%
where $x\in S$. As in Remark \ref{r-LandRelCutter}, one can adjust the above
inequality for $T=P_{Q}$, where $Q\subseteq \mathcal{H}_{2}$ is  closed and
convex, and $Q\cap \limfunc{im}A\neq \emptyset .$
\end{remark}

\section{Applications to the Split Convex Feasibility Problem\label{s-SCFP}}

In this section we propose a few $CQ$-type methods for solving the SCFP defined by \eqref{int:SCFP}-\eqref{int:SCFP-fix}, that is,
\begin{equation}
\text{Find }x\in C=\limfunc{Fix}S\text{  such that }Ax\in Q=\limfunc{Fix}T,
\end{equation}%
where $S,T$ are given operators.

\begin{theorem}
\label{t-ProjLand}Let $S:\mathcal{H}_{1}\rightarrow \mathcal{H}_{1}$ and $T:\mathcal{H}_{2} \rightarrow \mathcal{H}_{2}$ be $\rho _{S}$- and $\rho _{T}$-SQNE, respectively, where $\rho _{S},\rho _{T}>0$. Let the sequence  $\{x_{k}\}_{k=0}^\infty$  be defined by the method
\begin{equation}
x_{0}\in \mathcal{H}_{1};\qquad x_{k+1}:=S\left( x_{k}+\lambda _{k}\frac{%
\sigma (x_{k})}{\Vert A\Vert ^{2}}A^{\ast }\Big(T(Ax_{k})-Ax_{k}\Big)\right)
,  \label{e-ProjLand}
\end{equation}%
where the relaxation parameters satisfy $\lambda _{k}\in \lbrack \varepsilon
,1]$ for some $\varepsilon \in (0,1)$ and the extrapolation function $\sigma
:\mathcal{H}_{1}\rightarrow \lbrack 1,\infty )$ is bounded from above by $\tau $  as  defined in \eqref{e-tau} (in particular, one can use $\sigma (x):=1$ or $\sigma (x):=\tau (x)$ for all $x$). Assume that the solution set $F:=%
\limfunc{Fix}S\cap A^{-1}(\limfunc{Fix}T)\neq \emptyset $. Then the
following statements hold:

\begin{enumerate}
\item[$\mathrm{(i)}$] If $S$ and $T$ are both weakly regular, then  the sequence $\{x_{k}\}_{k=0}^\infty$  weakly converges to some $x_{\infty }\in F$.

\item[$\mathrm{(ii)}$] If $\limfunc{im}A$ is closed, $S$ and $T$ are both
boundedly regular, and the two families of sets $\{\limfunc{im}A,\limfunc{Fix%
}T\}$ and $\{\limfunc{Fix}S,A^{-1}(\limfunc{Fix}T)\}$ are boundedly regular,
then the convergence to $x_{\infty }$ is in norm.

\item[$\mathrm{(iii)}$] If $\limfunc{im}A$ is closed, $S$ and $T$ are both
boundedly linearly regular, and the following two families of sets $\{%
\limfunc{im}A,\limfunc{Fix}T\}$ and $\{\limfunc{Fix}S,A^{-1}(\limfunc{Fix}%
T)\}$ are boundedly linearly regular, then the convergence to $x_{\infty }$
is at least linear, that is,
\begin{equation}
d(x_{k+1},F)\leq qd(x_{k},F)\quad \text{and}\quad \Vert x_{k}-x_{\infty
}\Vert \leq 2d(x_{0},F)q^{k},  \label{e-ProjLandRate}
\end{equation}%
for some $q\in (0,1)$, which may depend on $x_{0}$.
\end{enumerate}
\end{theorem}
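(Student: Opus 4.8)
The plan is to read the iteration \eqref{e-ProjLand} as the orbit of a product of two strongly quasi-nonexpansive operators, push the regularity hypotheses on $S$ and $T$ through the Landweber machinery of Sections \ref{s-LtO}--\ref{s-ELtO} to this product, and then invoke the general Fej\'er-monotone theory (Theorem \ref{t-Fejer}) together with the product-regularity transfer of Theorem \ref{t-regOper}. First I would put the method in operator form. Writing $V_{k}:=\mathcal{L}_{\lambda _{k}\sigma }\{T\}$ and $P_{k}:=S\circ V_{k}$, the identity $\tfrac{1}{\Vert A\Vert ^{2}}A^{\ast }(T(Ax)-Ax)=\mathcal{L}\{T\}x-x$ shows that the bracket in \eqref{e-ProjLand} equals $x_{k}+\lambda _{k}\sigma (x_{k})(\mathcal{L}\{T\}x_{k}-x_{k})=V_{k}x_{k}$, so that $x_{k+1}=P_{k}x_{k}$. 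Since $\lambda _{k}\in \lbrack \varepsilon ,1]\subseteq (0,1]$, Lemma \ref{l-extLand-SQNE} gives that each $V_{k}$ is $\rho _{T}$-SQNE with $\limfunc{Fix}V_{k}=A^{-1}(\limfunc{Fix}T)$; together with $S$ being $\rho _{S}$-SQNE, Theorem \ref{t-SQNE} (applied with $m=2$, $U_{1}=V_{k}$, $U_{2}=S$) shows that $P_{k}$ is $(\rho /2)$-SQNE with $\rho :=\min \{\rho _{S},\rho _{T}\}>0$ and $\limfunc{Fix}P_{k}=\limfunc{Fix}S\cap A^{-1}(\limfunc{Fix}T)=F$, the latter closed and convex and nonempty by assumption.

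Consequently $\{x_{k}\}$ is Fej\'er monotone with respect to $F$. Fixing $z\in F$ and summing the $(\rho /2)$-SQNE inequality $\tfrac{\rho }{2}\Vert P_{k}x_{k}-x_{k}\Vert ^{2}\leq \Vert x_{k}-z\Vert ^{2}-\Vert x_{k+1}-z\Vert ^{2}$ over $k$ telescopes the right-hand side and bounds it by $\Vert x_{0}-z\Vert ^{2}$; hence $\sum_{k}\Vert x_{k+1}-x_{k}\Vert ^{2}<\infty $, so $\Vert x_{k+1}-x_{k}\Vert =\Vert P_{k}x_{k}-x_{k}\Vert \to 0$, and the whole orbit stays in the fixed ball $B:=B(z,\Vert x_{0}-z\Vert )$.

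The core of the argument is the regularity transfer, carried out by applying Theorem \ref{t-regOper} to the constant sequence $S$ and the sequence $\{V_{k}\}$. The crucial link is $V_{k}x-x=\lambda _{k}\sigma (x)(\mathcal{L}\{T\}x-x)$, which together with $\lambda _{k}\geq \varepsilon $ and $\sigma \geq 1$ gives the uniform lower bound $\Vert V_{k}x-x\Vert \geq \varepsilon \Vert \mathcal{L}\{T\}x-x\Vert $; this passes every regularity of the plain Landweber operator $\mathcal{L}\{T\}$ from Theorem \ref{t-regLand} (resp. Corollary \ref{c-BoundRegLand}) to $\{V_{k}\}$, uniformly in $k$. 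In case (i), weak regularity of $T$ yields weak regularity of $\mathcal{L}\{T\}$, hence of $\{V_{k}\}$, and Theorem \ref{t-regOper}(i) makes $\{P_{k}\}$ weakly regular over $B$; since $\Vert P_{k}x_{k}-x_{k}\Vert \to 0$ and $\{x_{k}\}$ is bounded (so its weak cluster points lie in the weakly closed set $B$), every weak cluster point lies in $F$, and Theorem \ref{t-Fejer}(i) gives weak convergence to some $x_{\infty }\in F$. In case (ii), closedness of $\limfunc{im}A$ makes $|A|>0$ (Lemma \ref{l-clR}); bounded regularity of $T$ and of $\{\limfunc{im}A,\limfunc{Fix}T\}$ give bounded regularity of $\{V_{k}\}$ via Theorem \ref{t-regLand}(ii), and with the regularity of $\{\limfunc{Fix}S,A^{-1}(\limfunc{Fix}T)\}$, Theorem \ref{t-regOper}(ii) makes $\{P_{k}\}$ regular over $B$, so $d(x_{k},F)\to 0$ and Theorem \ref{t-Fejer}(ii) yields norm convergence.

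For case (iii), bounded linear regularity of $T$ and of $\{\limfunc{im}A,\limfunc{Fix}T\}$ gives, through Theorem \ref{t-regLand}(iii) and $\Vert V_{k}x-x\Vert \geq \varepsilon \Vert \mathcal{L}\{T\}x-x\Vert $, linear regularity of $\{V_{k}\}$ over $B$ with some modulus $\delta _{2}\geq \varepsilon \Delta >0$, $\Delta $ as in \eqref{e-Delta}; with $S$ linearly regular (modulus $\delta _{1}$) and $\{\limfunc{Fix}S,A^{-1}(\limfunc{Fix}T)\}$ linearly regular (modulus $\kappa _{F}$), Theorem \ref{t-regOper}(iii) furnishes a modulus $\delta _{P}>0$ for $\{P_{k}\}$ over $B$, i.e. $\Vert P_{k}x_{k}-x_{k}\Vert \geq \delta _{P}\,d(x_{k},F)$. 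Inserting this into the $(\rho /2)$-SQNE inequality with $z=P_{F}(x_{k})$, and using $\Vert x_{k}-z\Vert =d(x_{k},F)$ and $\Vert x_{k+1}-z\Vert \geq d(x_{k+1},F)$, gives $d^{2}(x_{k+1},F)\leq (1-\tfrac{\rho }{2}\delta _{P}^{2})\,d^{2}(x_{k},F)$, so $d(x_{k+1},F)\leq q\,d(x_{k},F)$ with $q:=\sqrt{1-\tfrac{\rho }{2}\delta _{P}^{2}}\in (0,1)$; Theorem \ref{t-Fejer}(iii) then delivers both estimates in \eqref{e-ProjLandRate}. The main obstacle I anticipate is purely bookkeeping: making each regularity claim uniform in $k$ despite the varying relaxation $\lambda _{k}$, and localizing all moduli to the single ball $B$ (which is why $\delta _{P}$, and hence $q$, may depend on $x_{0}$). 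The identity $V_{k}x-x=\lambda _{k}\sigma (x)(\mathcal{L}\{T\}x-x)$ with $\lambda _{k}\geq \varepsilon $ is exactly what neutralizes this difficulty, reducing the entire problem to the single-operator regularity results already established.
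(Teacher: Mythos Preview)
Your proposal is correct and follows essentially the same route as the paper: rewrite the iteration as $x_{k+1}=S\mathcal{L}_{\lambda_k\sigma}\{T\}x_k$, use Lemma~\ref{l-extLand-SQNE} and Theorem~\ref{t-SQNE} for the $(\rho/2)$-SQNE property and Fej\'er monotonicity, transfer regularity to the product via Theorem~\ref{t-regOper}, and conclude with Theorem~\ref{t-Fejer}. The only cosmetic difference is that the paper cites Theorem~\ref{t-regExtrLand} for $\mathcal{L}_\sigma\{T\}$ and then handles the extra relaxation $\lambda_k$ separately, whereas you go straight to the plain operator $\mathcal{L}\{T\}$ via Theorem~\ref{t-regLand}/Corollary~\ref{c-BoundRegLand} and absorb both $\sigma$ and $\lambda_k$ in one step through the lower bound $\Vert V_k x-x\Vert\geq\varepsilon\Vert\mathcal{L}\{T\}x-x\Vert$; the two are equivalent.
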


\begin{proof}
Observe that since we can write $x_{k+1}=S\mathcal{L}_{\lambda _{k}\sigma
}\{T\}x_k $, the sequence  $\{x_{k}\}_{k=0}^\infty$  is Fej\'{e}r monotone with respect to $F$.
Indeed, by Lemma \ref{l-extLand-SQNE}, the operator $\mathcal{L}_{\lambda
\sigma }\{T\}$ is $\rho _{T}$-SQNE and $\limfunc{Fix}\mathcal{L}_{\lambda
\sigma }\{T\}=A^{-1}(\limfunc{Fix}T)$. By Theorem \ref{t-SQNE}, the product $%
S\mathcal{L}_{\lambda _{k}\sigma }\{T\}$ is $(\frac{1}{2}\min \{\rho
_{S},\rho _{T}\})$-SQNE with $\limfunc{Fix}S\mathcal{L}_{\lambda _{k}\sigma
}\{T\}=F$. Hence for any  point  $z\in F$, we have
\begin{equation}
\Vert x_{k+1}-z\Vert ^{2}\leq \Vert x_{k}-z\Vert ^{2}-\frac{\min \{\rho
_{S},\rho _{T}\}}{2}\Vert x_{k+1}-x_{k}\Vert ^{2}.  \label{pr-ProjLandFM}
\end{equation}%
Moreover, since  $\{\Vert x_{k}-z\Vert \}_{k=0}^\infty$  converges as a decreasing sequence, we have
\begin{equation}
\left\Vert S\mathcal{L}_{\lambda _{k}\sigma }x_{k}-x_{k}\right\Vert =\Vert
x_{k+1}-x_{k}\Vert \rightarrow 0.  \label{pr-ProjLandXk}
\end{equation}%
By Fej\'{e}r monotonicity, we  also   have
\begin{equation}
\{x_{k}\}\subset B_{1}:=\{x\in \mathcal{H}_{1}\mid \Vert x-P_{F}x_{0}\Vert
\leq r:=d(x_{0},F)\}.
\end{equation}%
Consequently,
\begin{equation}
\{Ax_{k}\}\subset B_{2}:=\{y\in \mathcal{H}_{2}\mid \Vert y-AP_{F}x_{0}\Vert
\leq r\Vert A\Vert \}
\end{equation}%
and
\begin{equation}
B_{1}\subseteq A^{-1}(B_{2}).  \label{pr-ProjLandB1B2}
\end{equation}%
The remaining part of the proof follows from Theorems \ref{t-Fejer}, \ref%
{t-regOper} and \ref{t-regExtrLand}.

\textit{Part (i).} By assumption, $S$ is weakly regular over $B_{1}$ and $T$
is weakly regular over $B_{2}$. By Theorem \ref{t-regExtrLand} (i) and by \eqref{pr-ProjLandB1B2}, the operator $\mathcal{L}_{\sigma }\{T\}$ is weakly
regular over $B_{1}$. It is not difficult to see that the sequence of
relaxations  $\{\mathcal{L}_{\lambda _{k}\sigma }\}_{k=0}^\infty$  is also weakly regular over $B_{1}$ (see \cite[Proposition 4.7]{CRZ18}). Hence, by Theorem \ref{t-regOper} (i), the product sequence  $\{S\mathcal{L}_{\lambda _{k}\sigma }\}_{k=0}^\infty$  is weakly regular over $B_{1}$ as well. Thus (by the definition of weak regularity) any weak cluster point of  $\{x_{k}\}_{k=0}^\infty$  is in $F$, which shows that  $\{x_{k}\}_{k=0}^\infty$  converges weakly to some point $x_{\infty }\in F$ in view of Theorem \ref{t-Fejer} (i).

\textit{Part (ii).}  Using an argument similar to the one above,  we conclude that  $\{\mathcal{L}_{\lambda _{k}\sigma }\}_{k=0}^\infty$  is regular over $B_{1}$. By applying Theorem \ref{t-regOper} (ii), we see that  $\{S\mathcal{L}_{\lambda _{k}\sigma }\}_{k=0}^\infty$  is also regular over $B_{1}$. Hence by \eqref{pr-ProjLandXk}, when combined with the definition of regularity over $B_{1}$, we see that $d(x_{k},F)\rightarrow 0$, which implies that $\Vert x_{k}-x_{\infty }\Vert \rightarrow 0$ in view of Theorem \ref{t-Fejer} (ii).

\textit{Part (iii).} By assumption, the operator $T$ and the family $\{%
\limfunc{im}A, \limfunc{Fix}T\}$ are both linearly regular over $B_2$ with
moduli $\delta_T$ and $\kappa_2$, respectively. Thus, by Theorem \ref%
{t-regExtrLand} (iii) and \eqref{pr-ProjLandB1B2}, the operator $\mathcal{L}%
_{\sigma}\{T\}$ is linearly regular over $B_1$ with modulus $\Delta$ defined
in \eqref{e-Delta} (with $\kappa:=\kappa_2$). Consequently, the sequence  $\{\mathcal{L}_{\lambda_k\sigma}\{T\}\}_{k=0}^\infty$  is linearly regular over the same ball with modulus $\varepsilon\Delta$.

By another assumption, the family $\{\limfunc{Fix}S, A^{-1}(\limfunc{Fix}%
T)\} $ is linearly regular over $B_1$ with modulus $\kappa_1$. By applying
Theorem \ref{t-regOper} (iii) to  $\{S\}_{k=0}^\infty$ and $\{\mathcal{L}_{\lambda_k\sigma}\{T\}\}_{k=0}^\infty$,  we conclude that the product sequence  $\{S\mathcal{L}_{\lambda_k\sigma}\}_{k=0}^\infty$  is also linearly regular over $B_1$ with
modulus
\begin{equation}
\Gamma= \min\{\rho_S, \rho_T\} \left(\frac{\min\{\delta_S,\varepsilon
\Delta\}}{2\kappa_1}\right)^2,
\end{equation}
that is, $\|x_{k+1}-x_k\|\geq \Gamma d(x_k,F)$. Hence, by setting $z:=P_Fx_k$
in \eqref{pr-ProjLandFM} and by the inequality $d(x_{k+1},F)\leq
\|x_{k+1}-P_Fx_k\|$, we arrive at
\begin{equation}
d^2(x_{k+1}, F)\leq d^2(x_k,F) - \frac{\min\{\rho_S, \rho_T\}}{2} \Gamma^2
d^2(x_{k}, F).
\end{equation}
Consequently, $d(x_{k+1})\leq q d(x_k, F)$ with
\begin{equation}
q:=\sqrt{1- \frac{\min\{\rho_S, \rho_T\}}{2}\Gamma^2}
\end{equation}
and, by Theorem \ref{t-Fejer}(iii), we also have $\|x_k-x_\infty\|\leq
2d(x_0,F)q^k$. This completes the proof.
\end{proof}

\bigskip

Note again, as in (\ref{e-extLand}), that applying $\sigma (x)=\tau (x)$ in (\ref{e-ProjLand}) with $\tau (x)$ defined by (\ref{e-tau}), we do not need to know the norm of $A$.

\begin{remark}[Cutters $S$ and $T$]
\rm\ %
\label{r-ProjLandCutters}\textrm{If we assume that both $S$ and $T$ are
cutters, then the relaxation parameters $\lambda _{k}$ can be chosen from
the interval $[\varepsilon ,2-\varepsilon ]$ for some $\varepsilon \in (0,1)$. Indeed, define $U:=\limfunc{Id}+(2-\varepsilon )(T-\limfunc{Id})$ and  $\alpha_{k}:= \frac{\lambda _{k}}{2-\varepsilon}$.  Then $U$ is $\frac{%
\varepsilon }{2-\varepsilon }$-SQNE, $\alpha _{k}\in \lbrack \frac{%
\varepsilon }{2-\varepsilon },1]$ and $S\mathcal{L}_{\lambda _{k}\sigma
}\{T\}x=S\mathcal{L}_{\alpha _{k}\sigma }\{U\}x$. Moreover, the upper bound
for $\sigma $ defined by \eqref{e-tau} is exactly the same as the
corresponding upper bound determined by $U$, that is, for $Ax\notin \limfunc{%
Fix}T=\limfunc{Fix}U$, we have
\begin{equation}
\tau (x)=\left( \frac{\Vert A\Vert \cdot \Vert T(Ax)-Ax\Vert }{\Vert A^{\ast
}(T(Ax)-Ax)\Vert }\right) ^{2}=\left( \frac{\Vert A\Vert \cdot \Vert
U(Ax)-Ax\Vert }{\Vert A^{\ast }(U(Ax)-Ax)\Vert }\right) ^{2}.
\end{equation}%
Consequently, the sequence  defined by
\begin{equation}
x_{0}\in \mathcal{H}_{1};\qquad x_{k+1}:=S\mathcal{L}_{\lambda _{k}\sigma
}\{T\}x_{k}
\end{equation}%
is a particular case of the iteration defined in Theorem \ref{t-ProjLand}.
In addition, the convergence statements (i), (ii) and (iii) hold here as
well since weak/bounded/bounded linear regularity of $T$ implies the same
type of regularity for $U$. }
\end{remark}

\begin{corollary}[Extrapolated $CQ$-method]
\label{c-CQmethod} Let $C\subseteq \mathcal{H}_{1}$ and $Q\subseteq \mathcal{%
H}_{2}$ be nonempty, closed and convex. Let the sequence  $\{x_{k}\}_{k=0}^\infty$  be defined by the method
\begin{equation}
x_{0}\in \mathcal{H}_{1};\qquad x_{k+1}:=P_{C}\left( x_{k}+\lambda _{k}\frac{%
\sigma (x_{k})}{\Vert A\Vert ^{2}}A^{\ast }\Big(P_{Q}(Ax_{k})-Ax_{k}\Big)%
\right) ,
\end{equation}%
where the relaxation parameters satisfy $\lambda _{k}\in \lbrack \varepsilon
,2-\varepsilon ]$ for some $\varepsilon \in (0,1)$ and the extrapolation
function $\sigma \colon \mathcal{H}_{1}\rightarrow \lbrack 1,\infty )$ is
bounded from above by $\tau $ defined by (compare with \eqref{e-tau})
\begin{equation}
\tau (x):=%
\begin{cases}
\displaystyle\left( \frac{\Vert A\Vert \cdot \Vert P_{Q}(Ax)-Ax\Vert }{\Vert
A^{\ast }(P_{Q}(Ax)-Ax)\Vert }\right) ^{2}\text{,} & \text{if }Ax\notin Q%
\text{,} \\
1\text{,} & \text{if }Ax\in Q\text{;}%
\end{cases}
\label{e-tauCQ}
\end{equation}%
(in particular, one can use $\sigma (x):=1$ or $\sigma (x):=\tau (x)$ for
all $x$). Assume that $F:=C\cap A^{-1}(Q)\neq \emptyset $. Then the
following statements hold:

\begin{enumerate}
\item[$\mathrm{(i)}$]  The sequence  $\{x_{k}\}_{k=0}^\infty$  weakly converges to some $x_{\infty }\in F$.

\item[$\mathrm{(ii)}$] If $\limfunc{im}A$ is closed, and the two families of
sets $\{\limfunc{im}A,Q\}$ and $\{C,A^{-1}(Q)\}$ are boundedly regular, then
the convergence to $x_{\infty }$ is in norm.

\item[$\mathrm{(iii)}$] If $\limfunc{im}A$ is closed, and the two families
of sets $\{\limfunc{im}A,Q\}$ and $\{C,A^{-1}(Q)\}$ are boundedly linearly
regular, then the convergence to $x_{\infty }$ is at least linear, that is,
\begin{equation}
d(x_{k+1},F)\leq qd(x_{k},F)\quad \text{and}\quad \Vert x_{k}-x_{\infty
}\Vert \leq 2d(x_{0},F)q^{k},  \label{e-ProjLandRate}
\end{equation}%
for some $q\in (0,1)$, which may depend on $x_{0}$.
\end{enumerate}
\end{corollary}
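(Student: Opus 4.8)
The plan is to recognize Corollary \ref{c-CQmethod} as the specialization of Theorem \ref{t-ProjLand} obtained by taking $S:=P_{C}$ and $T:=P_{Q}$. First I would record the three elementary facts that make this identification work: the metric projections $P_{C}$ and $P_{Q}$ are cutters, hence $1$-SQNE with $\rho_{S}=\rho_{T}=1>0$; their fixed-point sets are $\limfunc{Fix}P_{C}=C$ and $\limfunc{Fix}P_{Q}=Q$, so that the solution set $F=C\cap A^{-1}(Q)$ agrees with $\limfunc{Fix}S\cap A^{-1}(\limfunc{Fix}T)$; and consequently the two families of sets in the hypotheses, $\{\limfunc{im}A,Q\}$ and $\{C,A^{-1}(Q)\}$, coincide with the families $\{\limfunc{im}A,\limfunc{Fix}T\}$ and $\{\limfunc{Fix}S,A^{-1}(\limfunc{Fix}T)\}$ appearing in Theorem \ref{t-ProjLand}.

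Next I would justify the wider relaxation range. Theorem \ref{t-ProjLand} is stated for $\lambda_{k}\in[\varepsilon,1]$, whereas here $\lambda_{k}\in[\varepsilon,2-\varepsilon]$, and this is exactly the extension handled in Remark \ref{r-ProjLandCutters}. Since both $P_{C}$ and $P_{Q}$ are cutters, I would set $U:=\limfunc{Id}+(2-\varepsilon)(P_{Q}-\limfunc{Id})$ and $\alpha_{k}:=\lambda_{k}/(2-\varepsilon)\in[\varepsilon/(2-\varepsilon),1]$, so that $U$ is $\tfrac{\varepsilon}{2-\varepsilon}$-SQNE, shares both the fixed-point set and the extrapolation bound $\tau$ of $P_{Q}$, and $S\mathcal{L}_{\lambda_{k}\sigma}\{P_{Q}\}x=S\mathcal{L}_{\alpha_{k}\sigma}\{U\}x$. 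This places the iteration squarely inside the framework of Theorem \ref{t-ProjLand}.

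The decisive simplification is that metric projections are automatically linearly regular, and hence weakly, boundedly, and boundedly linearly regular, as noted right after Definition \ref{def:Reg}; moreover these regularities pass from $P_{Q}$ to its relaxation $U$, since relaxations preserve regularity. Thus the operator-regularity hypotheses on $S$ and $T$ in Theorem \ref{t-ProjLand}(i)--(iii) are satisfied for free, and the only assumptions left to supply are those on the two set families. Part (i) therefore follows from Theorem \ref{t-ProjLand}(i) with no extra conditions; part (ii) follows from Theorem \ref{t-ProjLand}(ii) using the assumed bounded regularity of $\{\limfunc{im}A,Q\}$ and $\{C,A^{-1}(Q)\}$; and part (iii) follows from Theorem \ref{t-ProjLand}(iii) using their bounded linear regularity, yielding the same geometric rate $q\in(0,1)$ built from the underlying moduli.

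Since every step merely verifies that the hypotheses of Theorem \ref{t-ProjLand} are in force, there is no genuine analytical difficulty here. The only points demanding care are the relaxation-range argument through Remark \ref{r-ProjLandCutters} and the bookkeeping of the set families, in particular keeping $\limfunc{Fix}T=Q$ distinct from its preimage $A^{-1}(Q)$; I would state these identifications explicitly to prevent any confusion.
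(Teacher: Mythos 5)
Your proposal is correct and follows essentially the same route as the paper: the paper's proof likewise observes that $P_{C}$ and $P_{Q}$ are linearly regular cutters (so that linear regularity implies regularity implies weak regularity) and then invokes Theorem \ref{t-ProjLand} together with Remark \ref{r-ProjLandCutters} to handle the relaxation range $[\varepsilon,2-\varepsilon]$. Your write-up simply makes explicit the identifications and the cutter-relaxation argument that the paper delegates to the cited remark.
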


\begin{proof}
The projections $P_{C}$ and $P_{Q}$ are linearly regular cutters. Recall that
linear regularity implies regularity and this in turn implies weak
regularity; see \cite[Corollary 4.4]{CRZ18}. Hence, the result follows  from  Theorem \ref{t-ProjLand} and Remark \ref{r-ProjLandCutters}.
\end{proof}

\begin{example}
\rm\ %
\textrm{Assume that }$\mathrm{\limfunc{im}}$\textrm{$A$ is closed. Then the two
additional regularity conditions mentioned in Corollary \ref{c-CQmethod}
(iii) are satisfied if, for example, $C\cap \limfunc{int}A^{-1}(Q)\neq
\emptyset $, or when $A(C)\cap \limfunc{int}Q\neq \emptyset $. This
follows from Example \ref{e-RegSets} (iii). }
\end{example}

\begin{corollary}[Extrapolated-subgradient $CQ$ method]
\label{c-CQmethodSub} Let $C=\{x\in \mathcal{H}_{1}\mid c(x)\leq 0\}\neq
\emptyset $ and $Q=\{y\in \mathcal{H}_{2}\mid q(x)\leq 0\}\neq \emptyset $
for some lower semi-continuous and convex functions $c$ and $q$. Let the sequence  $\{x_{k}\}_{k=0}^\infty$  be defined by the method
\begin{equation}
x_{0}\in \mathcal{H}_{1};\qquad x_{k+1}:=P_{c}\left( x_{k}+\lambda _{k}\frac{%
\sigma (x_{k})}{\Vert A\Vert ^{2}}A^{\ast }\Big(P_{q}(Ax_{k})-Ax_{k}\Big)%
\right) ,
\end{equation}%
where $P_{c}$ and $P_{q}$ are subgradient projections (see Example \ref%
{e-subProj}), the relaxation parameters satisfy $\lambda _{k}\in \lbrack
\varepsilon ,2-\varepsilon ]$ for some $\varepsilon \in (0,1)$ and the
extrapolation function $\sigma \colon \mathcal{H}_{1}\rightarrow \lbrack
1,\infty )$ is bounded from above by $\tau $ defined by (compare with %
\eqref{e-tau})
\begin{equation}
\tau (x):=%
\begin{cases}
\displaystyle\left( \frac{\Vert A\Vert \cdot \Vert P_{q}(Ax)-Ax\Vert }{\Vert
A^{\ast }(P_{q}(Ax)-Ax)\Vert }\right) ^{2}\text{,} & \text{if }q(Ax)>0\text{,%
} \\
1\text{,} & \text{if }q(Ax)\leq 0;%
\end{cases}
\label{e-tauCQsub}
\end{equation}%
(in particular, one can use $\sigma (x):=1$ or $\sigma (x):=\tau (x)$ for
all $x$). Assume that $F:=C\cap A^{-1}(Q)\neq \emptyset $, and that $c$ and $%
q $ are Lipschitz continuous on bounded sets (see Example \ref{e-subProjReg}
and Remark \ref{r-subdifferential}). Then the following statements hold:

\begin{enumerate}
\item[$\mathrm{(i)}$]  The sequence  $\{x_{k}\}_{k=0}^\infty$  weakly converges to some $x_{\infty }\in F$.

\item[$\mathrm{(ii)}$] If $\limfunc{im}A$ is closed, $P_{c}$ and $P_{q}$ are
both boundedly regular (for example, when both functions $c$ and $q$ are
strongly convex) and the two families of sets $\{\limfunc{im}A,Q\}$ and $%
\{C,A^{-1}(Q)\}$ are boundedly regular, then the convergence to $x_{\infty }$
is in norm.

\item[$\mathrm{(iii)}$] If $\limfunc{im}A$ is closed, $P_{c}$ and $P_{q}$
are both boundedly linearly regular and the two families of sets $\{\limfunc{%
im}A,Q\}$ and $\{C,A^{-1}(Q)\}$ are boundedly linearly regular, then the
convergence to $x_{\infty }$ is at least linear, that is,
\begin{equation}
d(x_{k+1},F)\leq qd(x_{k},F)\quad \text{and}\quad \Vert x_{k}-x_{\infty
}\Vert \leq 2d(x_{0},F)q^{k}  \label{e-CQsubRate}
\end{equation}%
for some $q\in (0,1)$ which may depend on $x_{0}$.
\end{enumerate}
\end{corollary}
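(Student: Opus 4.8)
The plan is to recognize this corollary as the specialization of Theorem \ref{t-ProjLand} obtained by taking $S:=P_c$ and $T:=P_q$, and then to verify that each of the three hypotheses sets translates into the corresponding regularity assumption on the subgradient projections via Example \ref{e-subProjReg}. First I would record the structural facts. By Example \ref{e-subProj}, both $P_c$ and $P_q$ are cutters with $\limfunc{Fix}P_c=S(c,0)=C$ and $\limfunc{Fix}P_q=S(q,0)=Q$; in particular they are $1$-SQNE by the equivalence (i)$\Leftrightarrow$(ii) of Lemma \ref{th:SQNEequiv} (the case $\tfrac{\rho+1}{2}=1$), so that $\rho_S=\rho_T=1>0$ and the standing SQNE hypothesis of Theorem \ref{t-ProjLand} holds. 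Consequently $\limfunc{Fix}S=C$, $\limfunc{Fix}T=Q$ and $A^{-1}(\limfunc{Fix}T)=A^{-1}(Q)$, which identifies the families $\{\limfunc{im}A,Q\}$ and $\{C,A^{-1}(Q)\}$ appearing here with the families $\{\limfunc{im}A,\limfunc{Fix}T\}$ and $\{\limfunc{Fix}S,A^{-1}(\limfunc{Fix}T)\}$ of Theorem \ref{t-ProjLand}. I would also note that the function $\tau$ in \eqref{e-tauCQsub} is exactly the function \eqref{e-tau} associated with $T=P_q$, since $Ax\in\limfunc{Fix}P_q$ if and only if $q(Ax)\le 0$. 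Because $P_c$ and $P_q$ are cutters, Remark \ref{r-ProjLandCutters} applies and licenses the wider relaxation range $\lambda_k\in[\varepsilon,2-\varepsilon]$ used in the statement, rather than the range $[\varepsilon,1]$ of Theorem \ref{t-ProjLand}.

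Next I would supply the regularity of $P_c$ and $P_q$ from Example \ref{e-subProjReg}. The standing hypothesis that $c$ and $q$ are Lipschitz continuous on bounded sets means, by Remark \ref{r-subdifferential}, that $\partial c$ and $\partial q$ are uniformly bounded on bounded sets; hence Example \ref{e-subProjReg}(i) gives that both $P_c$ and $P_q$ are weakly regular, which is precisely the hypothesis of Theorem \ref{t-ProjLand}(i) and settles part (i). For part (ii), the bounded regularity of $P_c$ and $P_q$ is assumed outright, and the parenthetical remark about strong convexity is justified by Example \ref{e-subProjReg}(ii); together with the closedness of $\limfunc{im}A$ and the assumed bounded regularity of the two families, all hypotheses of Theorem \ref{t-ProjLand}(ii) are met. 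For part (iii), the bounded linear regularity of $P_c$ and $P_q$ is likewise assumed (and follows from a Slater-type condition $c(z)<0$, $q(w)<0$ via Example \ref{e-subProjReg}(iii)); combined with the closedness of $\limfunc{im}A$ and the bounded linear regularity of the families, this meets the hypotheses of Theorem \ref{t-ProjLand}(iii).

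With these identifications in hand, the conclusion follows by a direct appeal to Theorem \ref{t-ProjLand}(i)--(iii), in the cutter form permitted by Remark \ref{r-ProjLandCutters}; in particular the linear rate $q\in(0,1)$ and the estimate $\|x_k-x_\infty\|\le 2d(x_0,F)q^k$ are inherited verbatim. I expect essentially no serious obstacle here, since the corollary is a clean instantiation of the general theorem; the only point requiring care is the passage from hypotheses phrased on the functions $c$ and $q$ to the regularity of the operators $P_c$ and $P_q$, which is exactly the content of Example \ref{e-subProjReg} and the reason the Lipschitz-on-bounded-sets assumption (equivalently, uniform boundedness of the subdifferentials) is imposed in the statement.
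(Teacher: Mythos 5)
Your proposal is correct and is exactly the argument the paper intends: the corollary is the instantiation of Theorem \ref{t-ProjLand} with $S:=P_c$, $T:=P_q$ (cutters, hence $1$-SQNE, with $\limfunc{Fix}P_c=C$, $\limfunc{Fix}P_q=Q$), the relaxation range $[\varepsilon,2-\varepsilon]$ being licensed by Remark \ref{r-ProjLandCutters} and the regularity hypotheses being supplied by Example \ref{e-subProjReg} together with Remark \ref{r-subdifferential}. The paper leaves this proof implicit, but it is the verbatim analogue of its proof of Corollary \ref{c-CQmethod}, so no comparison of routes is needed.
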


\begin{example}
\rm\ %
\textrm{All the regularity conditions mentioned in Corollary \ref%
{c-CQmethodSub} (iii) are satisfied if, for example, there is $z\in \mathcal{%
H}_{1}$ such that $c(z)<0$ and $q(Az)<0$. This follows from Examples \ref%
{e-RegSets}(iii) and \ref{e-subProjReg}(iii). }
\end{example}

\begin{remark}[Bounded linear regularity of the SCFP]
\rm\ %
\label{r-BLRofSCFP}\textrm{\ Conditions presented in Theorem \ref{t-ProjLand}
(iii) and in Corollary \ref{c-CQmethod} (iii) imply that the split convex feasibility
problem  has  the bounded linear regularity  property  in the sense of \cite[Definition 2.2]{WHLY17}; compare with \eqref{int:BLRofSCFP}. Indeed, by assumption, $\{C,A^{-1}(Q)\}$ is $\kappa _{1}$-linearly regular over $B_{1}:=\{x\in \mathcal{H}_{1}\mid \Vert x\Vert \leq r\}$ and $\{\limfunc{im}A,Q\}$ is $\kappa _{2}$-linearly regular over $B_{2}:=\{y\in \mathcal{H}_{2}\mid \Vert y\Vert \leq \Vert A\Vert r\}$. Consequently, for any $x\in C\cap B_{1}$, we have $Ax\in B_{2}$ and thus, by Lemma \ref{l-A1},
\begin{equation}
d(Ax,Q)\geq \frac{1}{\kappa _{2}}d(Ax,\limfunc{im}A\cap Q)\geq \frac{|A|}{%
\kappa _{2}}d\big(x,A^{-1}(Q)\big)\geq \frac{|A|}{\kappa _{1}\kappa _{2}}d%
\big(x,C\cap A^{-1}(Q)\big).
\end{equation}%
 At this point, it  is worth emphasizing that Theorem \ref{t-ProjLand} also applies to operators other than projections. }
\end{remark}

\bigskip

\textbf{Acknowledgement.} This research was supported in part by the Israel
Science Foundation (Grants no. 389/12 and 820/17), by the Fund for the
Promotion of Research at the Technion and by the Technion General Research
Fund.


\begin{thebibliography}{BKRZ18}

\bibitem[BBR78]{BBR78} J. B. Baillon, R. E. Bruck, S. Reich, On the asymptotic behabviour of the nonexpansive mappings in banach spaces, \textit{Houston J. Math.}, \textbf{4} (1978), 1--9.


\bibitem[BKRZ18]{BKRZ18} C. Bargetz, V. I. Kolobov, S. Reich, R. Zalas,
Linear convergence rates for extrapolated fixed point algorithms, \textit{%
Optimization}, DOI:10.1080/02331934.2018.1512109.

\bibitem[BB96]{BB96} H. H. Bauschke and J. Borwein, On projection algorithms
for solving convex feasibility problems, \textit{SIAM Review}, \textbf{38}
(1996), 367--426.

\bibitem[BNP15]{BNP15} H. H. Bauschke, D. Noll, H. M. Phan, Linear and strong
convergence of algorithms involving averaged nonexpansive operators, \textit{%
J. Math. Anal. Appl.}, \textbf{421} (2015) 1--20.


\bibitem[BC17]{BC17} H. H. Bauschke, P. L. Combettes, \textit{Convex Analysis and Monotone Operator Theory in Hilbert Spaces}, Second edition, Springer, Cham, 2017.


\bibitem[Byr02]{Byr02} C. Byrne, Iterative oblique projection onto convex
sets and the split feasibility problem, \textit{Inverse Problems}, \textbf{18%
} (2002), pp. 441--453.

\bibitem[Byr04]{Byr04} C. Byrne, A unified treatment of some iterative
algorithms in signal processing and image reconstruction, \textit{Inverse
Problems}, \textbf{20} (2004), 103--120.

\bibitem[Ceg12]{Ceg12} A. Cegielski, \textit{Iterative Methods for Fixed
Point Problems in Hilbert Spaces}, Lecture Notes in Mathematics, 2057,
Springer, Heidelberg, 2012.

\bibitem[Ceg15]{Ceg15} A. Cegielski, General method for solving the split
common fixed point problem, \textit{J. Optim. Theory Appl.}, \textbf{165}
(2015), 385--404.

\bibitem[Ceg16]{Ceg16} A. Cegielski, \textit{Landweber-type operator and its
properties}, Contemporary Mathematics, \textbf{658} (2016), 139--148.

\bibitem[CM16]{CM16} A. Cegielski, F. Al-Musallam, Strong convergence of a
hybrid steepest descent method for the split common fixed point problem,
\textit{Optimization}, \textbf{65} (2016), 1463--1476.

\bibitem[CRZ18]{CRZ18} A. Cegielski, S. Reich, R. Zalas, Regular
sequences of quasi-nonexpansive operators and their applications, \textit{%
SIAM Journal on Optimization}, \textbf{28} (2018), 1508--1532.

\bibitem[CE94]{CE94} Y. Censor, T. Elfving, A multiprojection algorithm
using Bregman projections in a product space, \textit{Numer. Algorithms},
\textbf{8} (1994), 221--239.

\bibitem[CEKB05]{CEKB05} Y. Censor, T. Elfving, N. Kopf, T. Bortfeld, The
multiple-sets split feasibility problem and its applications for inverse
problems, \textit{Inverse Problems}, \textbf{21} (2005), 2071--2084.

\bibitem[CMS07]{CMS07} Y. Censor, A. Motova, A. Segal, Perturbed projections
and subgradient projections for the multiple-sets split feasibility problem,
\textit{J. Math. Anal. Appl.}, \textbf{327} (2001), 1244--1256.

\bibitem[CS09]{CS09} Y. Censor, A. Segal, The split common fixed point
problem for directed operators, \textit{J. Convex Anal.}, \textbf{16}
(2009), 587--600.

\bibitem[DM05]{DM05} L. Debnath and P. Mikusi\'{n}ski, \textit{Hilbert
Spaces with Applications}, Third Edition, Elsevier, Amsterdam, 2005.

\bibitem[Deu01]{Deu01} F. Deutsch, \textit{Best Approximation in Inner
Product Spaces}, Springer-Verlag, New York, 2001.

\bibitem[JEKC06]{JEKC06} B. Johansson, T. Elfving, V. Kozlov, Y. Censor,
P.-E. Forss\'{e}n, G. Granlund, The application of an oblique-projected
Landweber method to a model of supervised learning, \textit{Mathematical and
Computer Modelling}, \textbf{43} (2006), 892--909.

\bibitem[Kra55]{Kra55} M. A. Krasnosel'ski{\u{\i}}, Two remarks on the method of
successive approximations, \textit{Uspekhi Mat. Nauk} \textbf{10}, (1955),
123--127.

\bibitem[Kre14]{Kre14} R. Kress, \textit{Linear Integral Equations}, Third
edition, Applied Mathematical Sciences, \textbf{82}, Springer, New York,
2014.

\bibitem[Lan51]{Lan51} L. Landweber, An iterative formula for Fredholm
integral equations of the first kind, \textit{Am. J. Math.}, \textbf{73}
(1951), 615--24.

\bibitem[LMWX12]{LMWX12} G. L\'{o}pez, V. Mart\'{\i}n-M\'{a}rquez, F.
Wang, H.-K. Xu, Solving the split feasibility problem without prior
knowledge of matrix norms, \textit{Inverse Problems}, 28 (2012), 085004
(18pp).

\bibitem[Man53]{Man53} W. R. Mann, Mean value methods in iteration, \textit{%
Proc. Am. Math. Soc.}, \textbf{4} (1953), 506--510.

\bibitem[MR07]{MR07} E. Masad, S. Reich, A note on the multiple-set split
convex feasibility problem in Hilbert space, \textit{J. Nonlinear Convex
Anal.}, \textbf{8} (2007), 367--371.

\bibitem[Mou11]{Mou11} A. Moudafi, A note on the split common fixed-point
problem for quasi-nonexpansive operators, \textit{Nonlinear Anal.}, \textbf{%
74} (2011), 4083--4087.

\bibitem[PB97]{PB97} M. Piana, M. Bertero, Projected Landweber method and
preconditioning, \textit{Inverse Problems}, \textbf{13} (1997), 441--463.%

\bibitem[Pol87]{Pol87} B. T. Polyak, \textit{Introduction to Optimization}, Optimization Software, New York, 1987.

\bibitem[QX05]{QX05} B. Qu, N. Xiu, A note on the CQ algorithm for the split
feasibility problem, \textit{Inverse Problems}, \textbf{21} (2005),
1655--1665.

\bibitem[WX11]{WX11} F. Wang, H.-K. Xu, Cyclic algorithms for split
feasibility problems in Hilbert spaces. \textit{Nonlinear Analysis}, \textbf{%
74} (2011), 4105--4111.

\bibitem[WHLY17]{WHLY17} J. Wang, Y. Hu, C. Li, J.-C. Yao, Linear
convergence of CQ algorithms and applications in gene regulatory network
inference, \textit{Inverse Problems}, \textbf{33} (2017), 055017 (25pp).

\bibitem[Xu06]{Xu06} H.-K. Xu, A variable Krasnosel'ski{\u{\i}}-Mann
algorithm and the multiple-set split feasibility problem, \textit{Inverse
Problems}, \textbf{22} (2006), 2021--2034.

\bibitem[Xu10]{Xu10} H.-K. Xu, Iterative methods for the split feasibility
problem in infinite-dimensional Hilbert spaces, \textit{Inverse Problems},
\textbf{26} (2010), 105018 (17pp).

\bibitem[Xu11]{Xu11} H.-K. Xu, Averaged mappings and the gradient-projection
algorithm, \textit{J. Optim. Theory Appl.,} \textbf{50} (2011), 360--378.

\bibitem[Yan04]{Yan04} Q. Yang, The relaxed CQ algorithm solving the split
feasibility problem, \textit{Inverse Problems}, \textbf{20} (2004),
1261--1266.

\bibitem[Yan05]{Yan05} Q. Yang, On variable-step relaxed projection
algorithm for variational inequalities, \textit{J. Math. Anal. Appl.},
\textbf{302} (2005), 166--179.

\bibitem[Yos78]{Yos78} K. Yosida, \textit{Functional Analysis}, Fifth
Edition, Springer-Verlag, Berlin, 1978.%

\bibitem[ZC10]{ZC10} H. Zhang and L. Z. Cheng, Projected Landweber iteration
for matrix completion, \textit{Journal of Computational and Applied
Mathematics}, \textbf{235} (2010), 593--601.%

\end{thebibliography}
\end{document}